\newcommand{\step}[1]{\medskip\noindent\textbf{Step #1. }}
\newtheorem{theorem}{Theorem}[section]
\newtheorem{proposition}[theorem]{Proposition}
\newtheorem{lemma}[theorem]{Lemma}
\newtheorem{definition}{Definition}[section]
\newtheorem{remark}[theorem]{Remark}
\newtheorem{assumption}{Assumption}[section]
\author{Laura Lauerbach\footnote{Institute of Mathematics,
University of W\"{u}rzburg, Emil-Fischer-Str.~40, 97074 W\"{u}rzburg, Germany},
Stefan Neukamm\footnote{Department of Mathematics, Technische Universit\"{a}t Dresden, 01069 Dresden, Germany},
Mathias Sch\"{a}ffner\footnote{Institute of Mathematics, University of Leipzig, 04081 Leipzig, Germany},
Anja Schl\"{o}merkemper\footnote{Institute of Mathematics,
University of W\"{u}rzburg, Emil-Fischer-Str.~40, 97074 W\"{u}rzburg, Germany}
}
\title{Mechanical behaviour of heterogeneous nanochains in the $\Gamma$-limit of stochastic particle systems}
\begin{document}
\maketitle

\begin{abstract}
Nanochains of atoms, molecules and polymers have gained recent interest in the experimental sciences. This article contributes to an advanced mathematical modeling of the mechanical properties of nanochains that allow for heterogenities, which may be impurities or a deliberately chosen composition of different kind of atoms. We consider one-dimensional systems of particles which interact through a large class of convex-concave potentials, which includes the classical Lennard-Jones potentials. 

We allow for a stochastic distribution of the material parameters and investigate the effective behaviour of the system as the distance between the particles tends to zero. The mathematical methods are based on $\Gamma$-convergence, which is a suitable notion of convergence for variational problems, and on ergodic theorems as is usual in the framework of stochastic homogenization. The allowed singular structure of the interaction potentials causes mathematical difficulties that we overcome by an approximation. We consider the case of $K$ interacting neighbours with $K\in \mathbb{N}$ arbitrary, i.e., interactions of finite range. 
\end{abstract}
\medskip
\noindent
{\bf Key Words:} Continuum limit, discrete system, stochastic homogenization, $\Gamma$-convergence, ergodic theorems,
Lennard-Jones potentials,  
next-to-nearest neigbhour interaction, interactions of finite range, fracture. \\
\medskip
\noindent
{\bf AMS Subject Classification.}
74Q05, 49J45, 41A60, 74A45, 74G65, 74R10. 

\tableofcontents

\section{Introduction}

In this article we extend results on the passage from discrete to continuous systems for particle chains that show heterogeneities on the microscopic level. For instance, this can be due to fault atoms, to different bonds between the same kind of elements (e.g. $\cdots$C$\equiv$C=C$\equiv$C=C$\equiv$C$\cdots$ \cite{LaTorreetal})
or to more advanced compositions of the nanochains. One-dimensional chains of atoms find applications in carbon atom wires \cite{ReviewCAW, Nairetal,Zhangetal} or as Au-chains on substrates \cite{WagnerEtal2018}. Further, they serve as toy-models for higher dimensional systems. From the mathematical point of view, one-dimensional systems have the advantage that the particles are monotonically ordered.

In \cite{LauerbachSchaeffnerSchloemerkemper2017}, three of the current authors proved a $\Gamma$-convergence result for the passage from discrete to continuous systems in the setting of periodic heterogeneities. In the current paper, instead, we investigate the stochastic setting which provides a more general approach and thus allows for more applications, as, e.g., in the case of fault atoms or composite materials. \\

We consider a lattice model for a one-dimensional chain of $n+1$ atoms (or other particles) that interact via random potentials of Lennard-Jones type. The interactions are of finite range in the sense that the $i$th atom may interact with the atoms with labels $i+1$ up to $i+K$, $K\in\mathbb{N}$. The random interaction potentials are assumed to have a stationary and ergodic distribution. We describe configurations of the chain with help of a deformation relative to a reference configuration where the atoms are  equidistributed with lattice spacing $\lambda_n=\frac{1}{n}$, that is, $u:\lambda_n\mathbb Z\cap[0,1]\to\mathbb R$. To each such deformation we associate an ``atomistic'' energy given by the sum of all interaction potentials. In the special case of nearest-neighbour interactions, it may take the form
\begin{equation}\label{e.ste1}
  E_n(u)=\lambda_n\sum_{i=0}^{n-1}J_{i}\left(\dfrac{u^{i+1}-u^{i}}{\lambda_n}\right),
\end{equation}
see \eqref{eq:energy} for the general case.
Typically the reference configuration is not an energy minimizing state (nor an equilibrium state). Moreover, in view of spatial heterogeneity, minimizers of the energy are typically  non-trivial (in the sense that they are given by configurations of the chain with non-equidistributed atoms). \\

As a main result we prove $\Gamma$-convergence of the atomistic energy to a deterministic integral functional with a spatially homogeneous, convex potential as the number of atoms $n$ tends to infinity, see Theorem~\ref{Thm:nullteordnung}. This limit includes a passage from a discrete to a continuous model as well as a quenched (almost sure) stochastic homogenization result. \\

While we prove our mathematical results for $K$ interacting neighbours, $K\in \mathbb{N}$, and a large class of interaction potentials, cf.\ Remark~\ref{rem:Gay-Berne}, we here give a more detailed description of our result in the special case of chains with nearest-neighbour  interactions whose potentials are given by independent and identically distributed classical Lennard-Jones interactions. The latter are defined by the two-parameter family $J_{LJ}(z):=A/z^{12}-B/z^6$ with $A,B>0$. These potentials may equivalently be represented in the form  $J_{LJ}(z)=\varepsilon\left(\frac{\delta}{z}\right)^6\left[\left(\frac{\delta}{z} \right)^6-2\right]$ for suitable parameters $(\delta,\varepsilon)\in(0,\infty)^2$, see Figure~\ref{fig:LJ} for the meaning of these parameters. We consider the energy functional \eqref{e.ste1} with potentials $J_i(z)=\varepsilon_i\left(\frac{\delta_i}{z}\right)^6\left[\left(\frac{\delta_i}{z} \right)^6-2\right]$ where the parameters $(\delta_i,\varepsilon_i)$ are independent and identically distributed and bounded from above and away from $0$, say $(\delta_i,\varepsilon_i)\in (\frac1C,C)^2$ for some  constant $C>0$. 
\begin{figure}[t]
	\centering
	\includegraphics[width=0.4\textwidth]{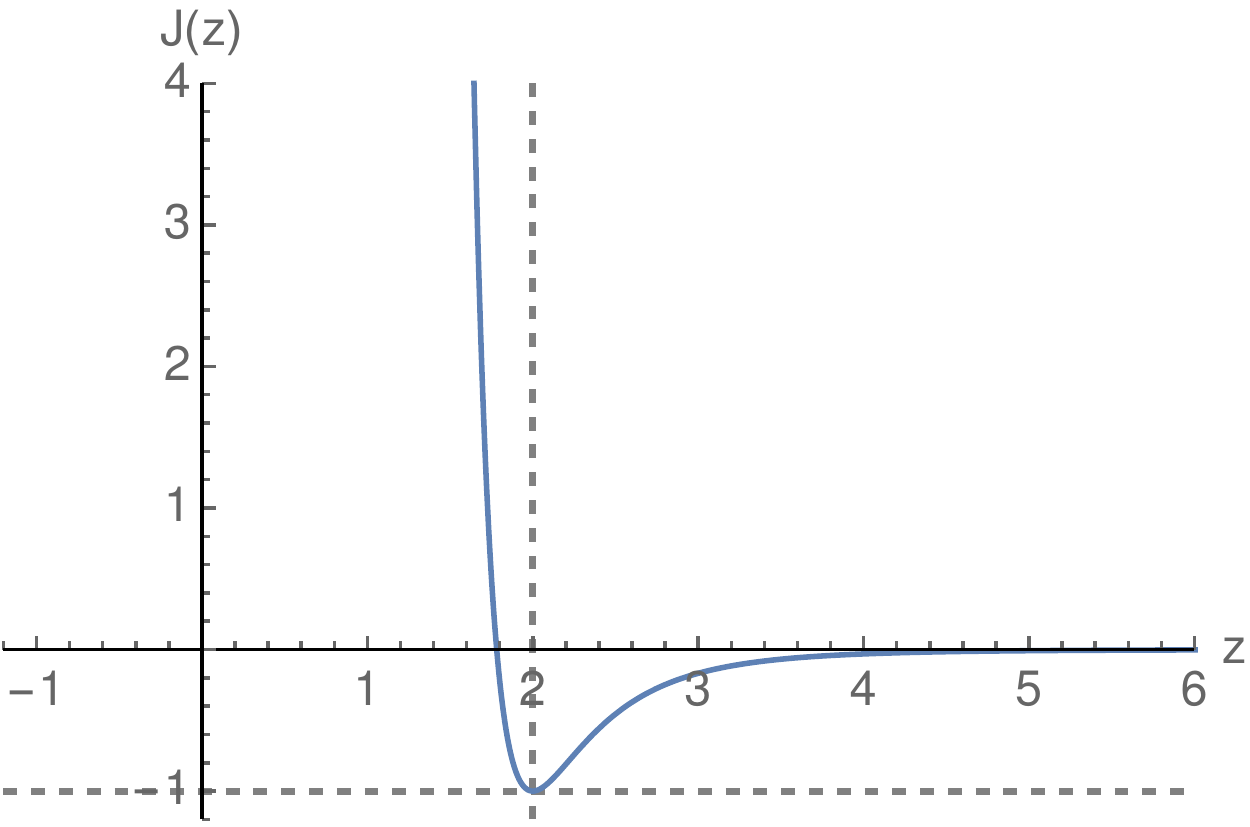}  
	\caption{Lennard-Jones potential $J_{LJ}$, with $\delta=2$ and $\varepsilon=1$.}
	\label{fig:LJ}
\end{figure}

Theorem~\ref{Thm:nullteordnung} then yields that the energy functional \eqref{e.ste1} subject to displacement boundary conditions $\Gamma$-converges as $n\to\infty$ to a deterministic continuum limit of the form
\begin{align*}
	E_{\mathrm{hom}}(u)=\int_{0}^{1}J_{\mathrm{hom}}(u'(x))\,\mathrm{d}x
\end{align*}
with the homogenized energy density 
\begin{align*}
  J_{\mathrm{hom}}(z)=\lim\limits_{N\rightarrow\infty}\dfrac{1}{N}\inf\left\{\sum_{i=0}^{N-1}J_i\left(z+\phi^{i+1}-\phi^{i}\right)\,|\, \phi^i\in\mathbb{R},\ \phi^0=\phi^N=0  \right\}, \quad z\in\mathbb{R}.
\end{align*}
It will turn out that 
\begin{equation*}
  J_{\mathrm{hom}}(z)
  \begin{cases}
    >-\mathbb{E}[\varepsilon]&\text{for }z<\mathbb{E}[\delta],\\
    =-\mathbb{E}[\varepsilon]&\text{for }z \geq \mathbb{E}[\delta],
  \end{cases}
\end{equation*}
where $\mathbb{E}$ denotes the expectation, see Figure~\ref{fig:intro2}, Propositions~\ref{Prop:Jhom} and \ref{Prop:Jhomzgross}. In particular, under compressive boundary conditions, i.e., $u(1)-u(0)<\mathbb{E}[\delta]$, minimizers are affine (in contrast to the corresponding minimizer of the associated atomistic energy). The precise form of $J_{\mathrm{hom}}$ for $z<\mathbb{E}[\delta]$ depends on the underlying distribution of the parameters $(\delta_i,\varepsilon_i)$. For illustration we consider two examples. In the first example, see Figure~\ref{fig:intro1}, we assume that
$(\delta_i,\varepsilon_i)$ is uniformly distributed in $\Omega_1:=[1,2]\times[3,4]$; in the second example we suppose that $\delta_i$ and $\varepsilon_i$ are independent and two-valued with $\mathbb P(\delta_i=1)=0.9$, $\mathbb P(\delta_i=6)=0.1$, $\mathbb P(\varepsilon_i=3)=0.9$, and $\mathbb P(\varepsilon_i=8)=0.1$. In both cases we obtain $\mathbb{E}[\delta] = 1.5$ and $\mathbb{E}[\varepsilon]=3.5$. Therefore, while $J_{\mathrm{hom}}$ coincides for $z\geq\mathbb{E}[\delta]$ in both examples, they differ for $z<\mathbb{E}[\delta]$, see Figure~\ref{fig:intro2}.\\

\begin{figure}
\begin{minipage}{0.48\textwidth}
\centering
	\includegraphics[width=0.8\textwidth]{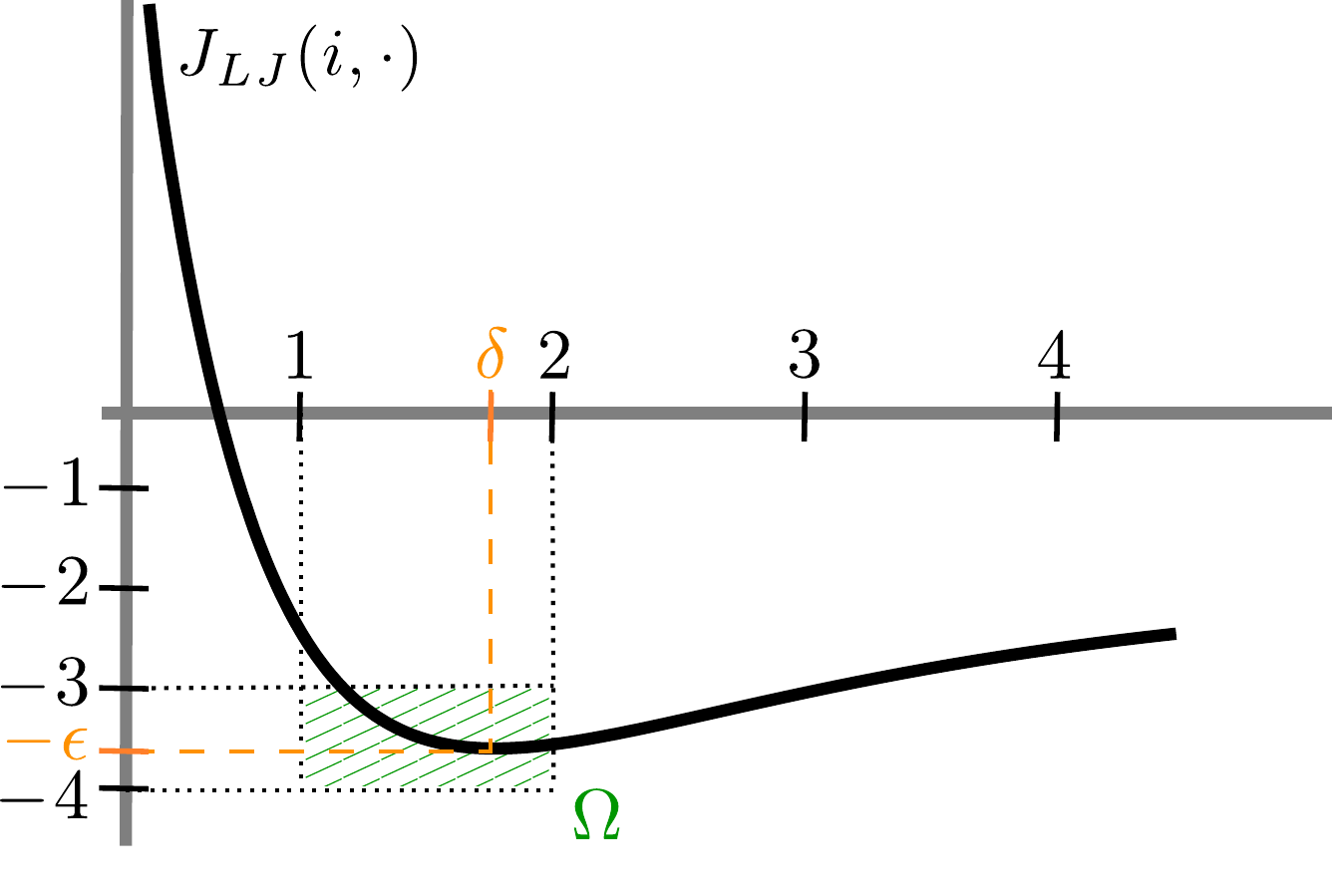}  
	\caption{A prototypical potential $J_{LJ}(i,\cdot)$ in the setting $(\Omega_1,\mathcal{F}_1,\mathbb{P}_1)$.}
	\label{fig:intro1}
\end{minipage} \hspace{0.02\textwidth}
\begin{minipage}{0.48\textwidth}
\centering
	\includegraphics[width=0.84\textwidth]{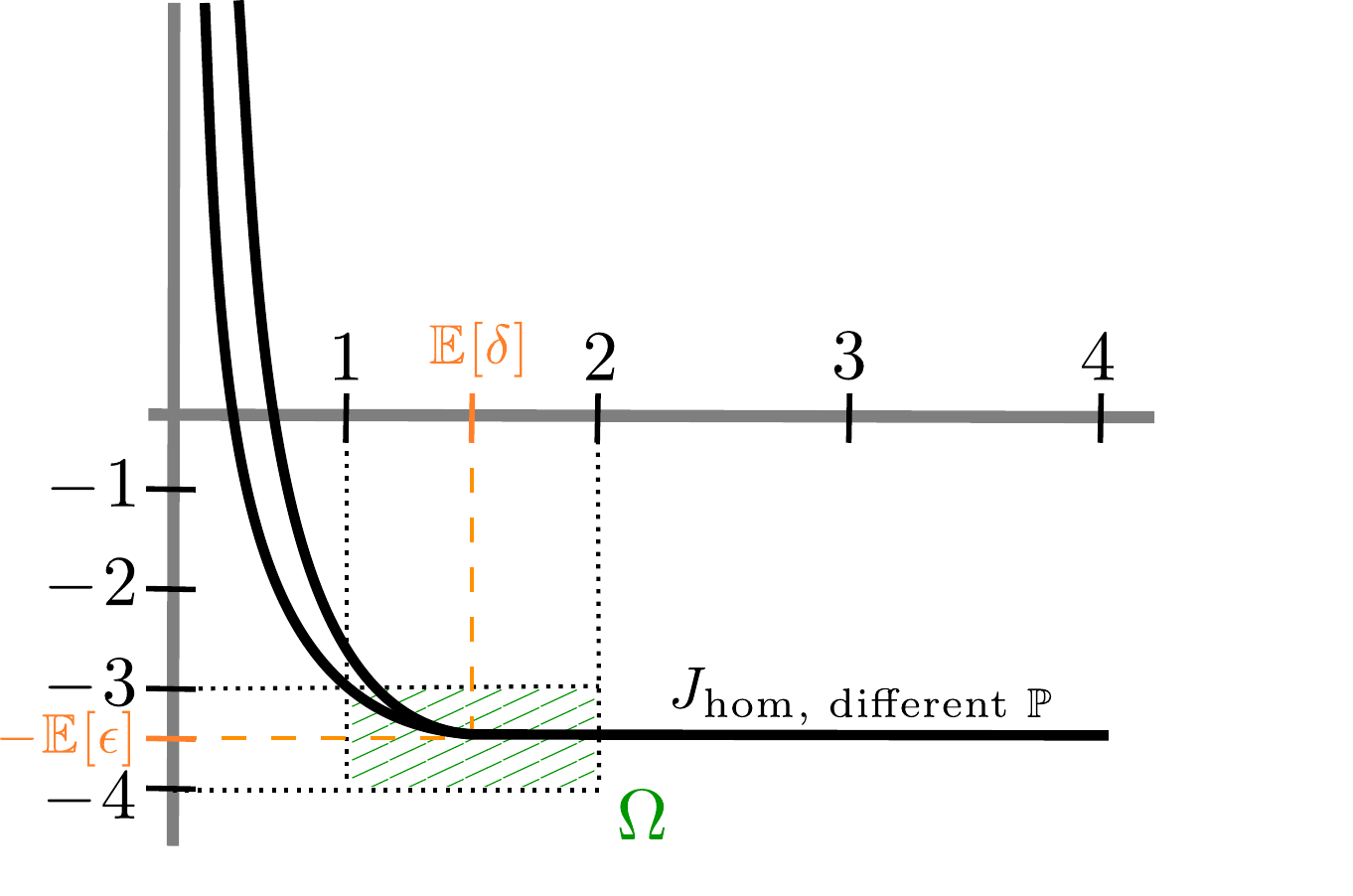}  
	\caption{Two different functions $J_{\mathrm{hom}}$ related to different probability distributions with identical expectation values $\mathbb{E}[\delta]$ and $\mathbb{E}[\varepsilon]$.}
	\label{fig:intro2}
\end{minipage}
	\end{figure}

Before we comment on related literature, we outline the strategy of our proof and the structure of the paper. In Section~\ref{sec:2} we introduce the class of all Lennard-Jones type interactions, the random setting, the energy functional $E_n^\ell$ on a suitable space of piecewise affine functions, and an infinite cell formula that is needed in the homogenized functional in the continuum limit. In Section~\ref{sec:main} we state the $\Gamma$-limit result for the functional $E_n^\ell$ with respect to the $L^1(0,1)$-topology  and properties of the homogenized energy density $J_{\mathrm{hom}}$. In the continuum limit, the system can show cracks, i.e., discontinuities of the deformation $u$. In order to gain further information on the cracks, analysis of a differently scaled energy functional is needed which takes surface energy contributions due to the formation of cracks into account. This will be the topic of a forthcoming paper, see also \cite{BraidesCicalese2007, ScardiaSchloemerkemperZanini2011, ScardiaSchloemerkemperZanini2012} and the introduction of \cite{CarioniFischerSchloemerkemper2018} for further related literature.\\

The proof of Theorem~\ref{Thm:nullteordnung}, which we provide in Section~\ref{sec:proof-thm}, requires various extensions of known homogenization results since the interaction potentials are allowed to blow up and are not convex.  To this end, we introduce a Lipschitz continuous approximation of the interaction potentials and a corresponding infinite cell formula $J_{\mathrm{hom}}^L$ in Section~\ref{sec:cellformula}. In this approximating setting, we can apply the subadditive ergodic theorem by Akcoglu and Krengel \cite{AkcogluKrengel1981}, cf.\ Theorem~\ref{Thm:AkcogluKrengel}, in the proof of Proposition~\ref{Prop:existencejhom}. In Proposition~\ref{Prop:Jhomzklein} we then show that $J_{\mathrm{hom}}$ is given as the limit of $J_{\mathrm{hom}}^L$ as $L\to\infty$ and hence exists. In Proposition~\ref{Prop:Jhom} we assert various properties of $J_{\mathrm{hom}}$ that are needed in the proof of the $\Gamma$-limit. In particular it turns out that $J_{\mathrm{hom}}$ is deterministic. 

 The proof of the liminf-inequality requires the introduction of two artificial coarser scales that help to deal with the randomness of the system and of the $K$ interacting neighbours, respectively, and makes the proof challenging from a technical point of view. The limsup-inequality is first shown for affine deformations, then for piecewise affine and finally for $W^{1,1}$-functions. By a relaxation theorem of Gelli \cite{Gelli}, cf.\ Theorem~\ref{PropGelli} in the appendix, the limsup-inequality is then true also for $BV$-functions.\\

Our work embeds into the existing literature as follows. For the related work on one-dimensional particle systems for convex-concave potentials and fracture mechanics we refer again to the introduction of \cite{CarioniFischerSchloemerkemper2018}. 
While the case of next-to-nearest neighbour interactions is quite standard, the case of $K$-interaction neighbours with $K >2$ is more involved, see \cite{BraidesLewOrtiz2006,SchaeffnerSchloemerkemper2015}. 

The periodic case of heterogeneous materials and their homogenization was investigated in \cite{BraidesGelli2006,LauerbachSchaeffnerSchloemerkemper2017}.  
Stochastic homogenization combined with passages from discrete to continuous systems has been the topic of research for other growth and coercivity conditions also in higher dimensions, see \cite{AlicandroCicaleseGloria2011,
NeukammSchaeffnerSchloemerkemper2017}.
In \cite{IosifescuLichtMichaille2001}, the authors also deal with a stochastic setting in one dimension. However, due to their growth conditions, Lennard-Jones potentials and other potentials with singular behaviour are excluded in their work, as are interactions beyond nearest neighbours. Further, in \cite{IosifescuLichtMichaille2001}  a discrete probability density is considered, while we allow the set of all interaction potentials to be infinite, even uncountable, which refers to a continuous probability density and thus a larger applicability of our results. The drawback is that our proofs are more technical and in particular need the approximation of the interaction potentials.

\section{Discrete model -- stochastic Lennard-Jones interactions} \label{sec:2}

\begin{figure}[t]
\centering
		\includegraphics[width=0.4\linewidth]{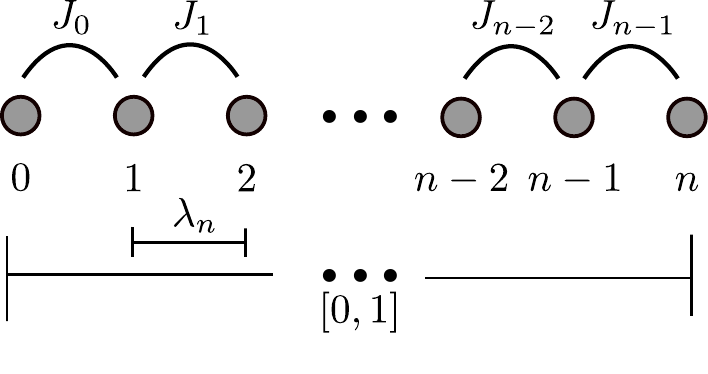}
		\caption{Chain of $n+1$ atoms with reference position $x_n^{i}=i\lambda_n$. The potentials $J_i$ describe the nearest neighbour interaction of atom $i$ and $i+1$. The characteristic length scale is $\lambda_n=\frac{1}{n}$ and the interval is $[0,1]$.}
\label{fig:kette}
\end{figure}

We consider a one dimensional lattice given by $\lambda_n\mathbb{Z}\cap[0,1]$, where $\lambda_n=\frac{1}{n}$. We regard this as a chain of $n+1$ atoms. The reference position of the $i$-th atom is referred to as $x_n^i:=i\lambda_n$. The deformation of the atoms is denoted by $u_n: \lambda_n\mathbb{Z}\cap[0,1]\rightarrow\mathbb{R}$; we write $u(x_n^i)=u^i$ for short. In the passage from discrete systems to their continuous counterparts it turns out to be useful to identify the discrete functions with their piecewise affine interpolations. We define 
\begin{align*}
	\mathcal{A}_n:=\left\lbrace u \in C([0,1]): u\ \text{is affine on}\ (i,i+1)\lambda_n,\ i\in\{0,1,...,n-1\} \right\rbrace
\end{align*}
as the set of all piecewise affine functions which are continuous. The interaction potentials of this chain are introduced in the following.

\subsection{Lennard-Jones type potentials} \label{sec:LJ}

The interaction potentials we consider belong to a large class $\mathcal{J}(\alpha,b,d,\Psi)$ of functions that includes the classical Lennard-Jones potential, which is the reason why we refer to the considered interaction potentials as being of \textit{Lennard-Jones type}. It is defined as follows.
\begin{definition}
 Fix $\alpha\in(0,1]$, $b>0$, $d\in[1,+\infty)$ and  a convex function $\Psi:\mathbb R\to[0,+\infty]$ satisfying
 \begin{equation}
 \label{growthpsi}
  \lim_{z\to0+} \Psi(z)=+\infty.
 \end{equation} 
We denote by $\mathcal J=\mathcal J(\alpha,b,d,\Psi)$ the class of functions $J:\mathbb R\to \mathbb R\cup\{+\infty\}$ which satisfy the following properties:
\begin{itemize}
	\item[(LJ1)] (Regularity and asymptotic decay) The function $J$ is lower semicontinuous, $J\in C^{0,\alpha}_{loc}(0,\infty)$ and
	\begin{equation}
	 \lim_{z\to0+} J(z)=\infty\quad \mbox{as well as}\quad J(z)=\infty\mbox{ for $z\leq0$.}
	\end{equation}
	\item[(LJ2)] (Convex bound, minimum and minimizer) $J$ has a unique minimizer $\delta$ with $\delta\in (\tfrac1d,d)$ and $J(\delta)<0$, and is strictly convex on $(0,\delta)$. Moreover, $\|J\|_{L^\infty(\delta,\infty)}<b$ and it holds	\begin{align}
	\label{LJ2abschaetzung}
		\tfrac1d\Psi(z)-d\leq J(z)\leq d\max\{\Psi(z),|z|\}\quad\text{for all}\ z\in(0,+\infty).
	\end{align}
	\item[(LJ3)] (Asymptotic behaviour) It holds
	\begin{equation}
	 \lim_{z\to\infty} J(z)=0.
	\end{equation}
\end{itemize}
\end{definition}
\begin{remark}
(i) The choice of the assumptions allows inter alia for the classical Lennard-Jones potential as well as for a potential with a hard core. The hard core is achieved by a shift of the domain from $(0,+\infty)$ to $(z_0,+\infty)$, with $z_0>0$. This can be easily done by shifting the Lennard-Jones potentials as $J(z-z_0)$, which does not affect the $\Gamma$-convergence result. More general, the result holds true for any shift of the domain from $(0,+\infty)$ to $(z_0,+\infty)$, with $z_0\in\mathbb{R}$.

(ii) The assumption of an open domain is not restrictive. Allowing also for $\mathrm{dom}J=[0,+\infty)$, the proofs get much easier, because then we have $J\in C^{0,\alpha}(0,+\infty)$, $0<\alpha\leq 1$, on its domain. This simplifies the handling of the ergodic theorems and the approximation of the potentials (introduced below) is not necessary. Therefore, $J_{\mathrm{hom}}^{L}$ can be derived directly from the ergodic theorems and the $\Gamma$-convergence result is the same.
\end{remark}

A combination of the convexity and monotonicity of $J$ in $(0,\delta)$ with the growth condition \eqref{LJ2abschaetzung} implies that $J$ is (locally) Lipschitz continuous in $(0,\delta)$. More precisely, we have
\begin{lemma}
\label{lem:lipschitz}
Fix $\alpha\in(0,1]$, $b>0$, $d\in[1,\infty)$ and  a convex function $\Psi:\mathbb R\to[0,\infty]$ satisfying \eqref{growthpsi}. There exists a function $C_{\rm Lip}:(0,d)\to[0,\infty)$ depending only on $d$ and $\Psi$ such that the following is true. Let $J\in\mathcal J(\alpha,b,d,\Psi)$ be given and let $\delta$ be its unique minimizer. Then it holds 
\begin{equation}
    \|J\|_{{\rm Lip}(\rho,\delta)}:=\sup_{\substack{x,y\in(\rho,\delta)\\ x\neq y}}\left|\frac{J(y)-J(x)}{y-x}\right|\leq C_{\rm Lip}(\rho)
\end{equation}
\end{lemma}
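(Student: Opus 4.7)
The plan is to exploit the convexity of $J$ on $(0,\delta)$ together with the fact that $\delta$ is its unique minimizer in order to reduce the Lipschitz constant on $(\rho,\delta)$ to a single leftmost secant slope, which can then be estimated using the growth bounds in (LJ2).

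First, I would observe that strict convexity on $(0,\delta)$ and the minimizer property at $\delta$ force $J$ to be (strictly) decreasing on $(0,\delta)$, so for any $\rho/2 < x < y < \delta$ the difference quotient $(J(y)-J(x))/(y-x)$ is non-positive. Next, by the standard three-slopes inequality for convex functions applied at $\rho/2 < x < y$,
\begin{equation*}
\frac{J(x)-J(\rho/2)}{x-\rho/2}\;\leq\;\frac{J(y)-J(x)}{y-x}\;\leq\;0,
\end{equation*}
so taking absolute values, using $x-\rho/2\geq\rho/2$, and using monotonicity once more in the form $J(x)\geq J(\delta)$ gives
\begin{equation*}
\left|\frac{J(y)-J(x)}{y-x}\right|\;\leq\;\frac{J(\rho/2)-J(x)}{x-\rho/2}\;\leq\;\frac{J(\rho/2)-J(\delta)}{\rho/2}.
\end{equation*}

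It remains to bound the right-hand side in a way that depends only on $d$, $\rho$, and $\Psi$. The upper inequality in (LJ2) gives $J(\rho/2)\leq d\max\{\Psi(\rho/2),\rho/2\}$, while the lower inequality combined with $\Psi\geq 0$ gives $J(\delta)\geq-d$. Substituting these two $J$-independent bounds produces
\begin{equation*}
C_{\mathrm{Lip}}(\rho):=\frac{2\bigl(d\max\{\Psi(\rho/2),\rho/2\}+d\bigr)}{\rho},
\end{equation*}
which is a function of $\rho$ depending only on $d$ and $\Psi$, as required.

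I do not anticipate a serious obstacle. The only subtlety is ensuring $\Psi(\rho/2)$ is finite so that the bound is non-vacuous; this is automatic, since the lower bound $J\geq\Psi/d-d$ in (LJ2) combined with the local H\"older regularity $J\in C^{0,\alpha}_{\mathrm{loc}}(0,\infty)$ from (LJ1) forces $\Psi<+\infty$ on $(0,\infty)$. The case $\rho\geq\delta$ is vacuous, since the supremum in the definition of $\|J\|_{\mathrm{Lip}(\rho,\delta)}$ is then over the empty set.
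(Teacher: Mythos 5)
Your proof is correct and follows exactly the route the paper sketches in the sentence preceding the lemma, namely combining the convexity and monotonicity of $J$ on $(0,\delta)$ with the two-sided growth bound \eqref{LJ2abschaetzung}; the paper itself does not spell out the argument, and your three-slopes estimate anchored at $\rho/2$, together with the observation that $\Psi$ is necessarily finite on $(0,\infty)$, fills it in cleanly.
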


\begin{remark}\label{rem:Gay-Berne} By defining the class of Lennard-Jones type potentials, a wide range of interaction potentials is covered, e.g., the classical Lennard-Jones. This is of interest, because the special choice of the potential depends on the field of application, for example atomistic or molecular interactions. Besides, even the classical Lennard-Jones potential is just an approximation and not an exact measured or mathematically derived formula, therefore it is useful to have assumptions keeping the main features of the potential without fixing it in detail. Further, the Gay-Berne potential is included in this setting. This is a modified 12--6 Lennard-Jones potential where the parameters of the potential depend on the relative orientation of the interacting, e.g., ellipsoidal particles, see for example \cite{BerardiFavaZannoni1998,MorenoRazoetal2011,Orlandietal2016}. In the following chapter, we introduce a stochastic setting, which can be use to model this orientation parameter as a random variable. 
\end{remark}

\subsection{Random setting}

The randomness enters the model through the interaction potentials. On the chain of atoms described above, we consider random interactions up to order $K$, with $K\in\mathbb{N}$. An illustration is shown in Figure \ref{fig:random}. The random interaction potentials $\{J_j(\omega,i,\cdot)\}_{i\in\mathbb{Z},\,j=1,...,K}$, $J_j(\omega,i,\cdot):\mathbb{R}\to(-\infty,+\infty]$, are of Lennard-Jones type, specified in Section~\ref{sec:LJ}; they are assumed statistically homogeneous and ergodic. This is a standard way in the theory of stochastic homogenization, see, e.g., \cite{AlicandroCicaleseGloria2011}. This assumptions are phrased as follows: Let $(\Omega,\mathcal{F},\mathbb{P})$ be a probability space. This space can be discrete or continuous with uncountably many different elements in the set $\Omega$. We assume that the family $(\tau_i)_{i\in\mathbb{Z}}$ of measurable mappings $\tau_i:\Omega\rightarrow\Omega$ is an additive group action, i.e.,
\begin{itemize}
	\item (group property) $\tau_0\omega=\omega$ for all $\omega\in\Omega$ and $\tau_{i_1+i_2}=\tau_{i_1}\tau_{i_2}$ for all $i_1,i_2\in\mathbb{Z}$. 
\end{itemize}
Additionally, we assume that we have:
\begin{itemize}
	\item (stationarity) The group action is measure preserving, that is $\mathbb{P}(\tau_iB)=\mathbb{P}(B)$ for every $B\in\mathcal{F}$, $i\in\mathbb{Z}$.
	\item (ergodicity) For all $B\in\mathcal{F}$, it holds $(\tau_i(B)=B\ \forall i\in\mathbb{Z})\Rightarrow\mathbb{P}(B)=0\ \text{or}\ \mathbb{P}(B)=1$.
\end{itemize}

\begin{figure}[t]
\centering
		\includegraphics[width=0.7\linewidth]{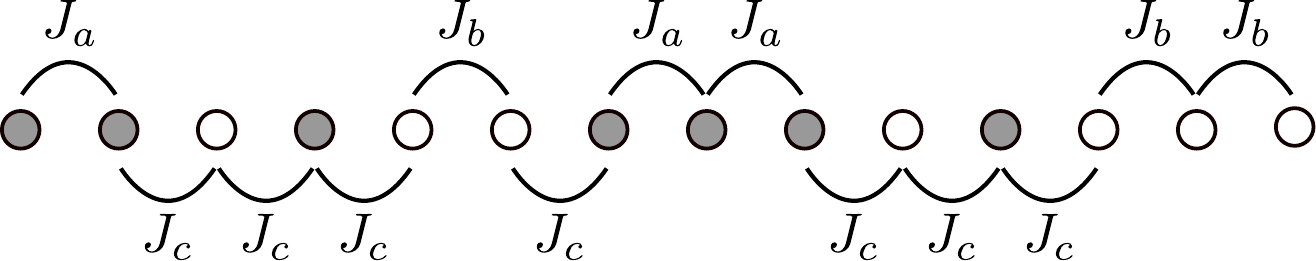}
		\caption{Randomly arranged chain of atoms. The nearest neighbour interaction potential of two grey atoms is labelled by $J_a$, that of two white atoms by $J_b$ and that between a white and a grey one by $J_b$. Since the atoms are randomly distributed, this holds for the potentials as well.}
\label{fig:random}
\end{figure}
For each $j=1,...,K$, we define $\tilde{J}_j:\Omega\to\mathcal{J}(\alpha,b,d,\Psi),\ \omega\mapsto \tilde{J}_j(\omega)(z)=:\tilde{J}_j(\omega,z)$, measurable in $\omega$. This maps the sample space into the set of Lennard-Jones potentials. Then, we define
\begin{align*}
	J_j(\omega,i,\cdot) :=\tilde{J}_j(\tau_{i}\omega,\cdot)\quad\text{ for all } i\in\mathbb{Z},\ \omega\in\Omega,\ j=1,...,K .
\end{align*}
This means that every mapping $\tau_i:\Omega\to\Omega$ of the group action is assigned to an atom of the chain and is used to relate the different atoms to different elements of the sample space and therefore to different interaction potentials. 
In the following, we denote $\tilde{J}_j$ simply by $J_j$, for better readability. It will be clear from the context which function is meant. 
We also define notation for the minimizers
\begin{align*}
\delta_j(\omega):=\mathrm{argmin}_{z\in\mathbb{R}}\left\{\tilde{J}_j(\omega,z) \right\}, \quad \delta_j(\tau_i\omega):=\mathrm{argmin}_{z\in\mathbb{R}}\left\{J_j(\omega,i,z) \right\}, \quad \text{for all}\ i\in\mathbb{Z},\ j=1,...,K.
\end{align*}

The potentials have to fulfil one more property, dealing with the H\"{o}lder estimates, where $[f]_{C^{0,\alpha}(A)}$ is the Hölder coefficient of the function $f$. This assumption is phrased in the following.

\begin{itemize}
\item[(H1)] For every $j=1,...,K$ it holds true that $\mathbb{E}\left[\left[ J_j(\omega,\cdot)\right]_{C^{0,\alpha}(\delta_j(\omega),+\infty)}\right]<\infty$.
\end{itemize}
This condition occurs with respect to the infinite set of potentials. When dealing with finitely many different potentials, this property is fulfilled automatically. Especially, (H1) is fulfilled if the H\"{o}lder coefficients on $(\delta,+\infty)$ of all functions $J\in\mathcal{J}$ are uniformly bounded.

\begin{remark}
\label{Rm:integrabilityandexpectationvalues}
(LJ2) provide a uniform bound of $\delta_j(\omega)$ and of $J_j(\omega,\delta_j(\omega))$. Therefore, the random variables $\delta_j(\omega)$ and $J_j(\omega,\delta_j(\omega))$ are integrable.

By definition of integrability, the expectation value exists for these random variables, which we denote by $\mathbb{E}[\delta_j]$ and $\mathbb{E}[J_j(\delta_j)]$. Regarding the expectation value as an ensemble mean, we can also say something about the sample average. This connection is strongly related to ergodicity and is explained in the next proposition.
\end{remark}
Define, for better readability, the random variable $C_j^H(\omega):=\left[J_j(\omega,\cdot) \right]_{C^{0,\alpha}(\delta_j(\omega),+\infty)}$, that is the Hölder coefficient of the function $J_j(\omega,\cdot)$ on $(\delta_j(\omega),\infty)$. We define some functions, which represent sample averages of the quantities $\delta_j$, $J_j(\delta_j)$ and $C_j^H$. Let $N\in\mathbb{N}$ and $A\subset \mathbb{R}$ an interval. 
\begin{align*}
\delta_j^{(N)}(\omega,A)&:=\dfrac{1}{|NA\cap\mathbb{Z}|}\sum_{i\in NA\cap\mathbb{Z}}\delta_j(\tau_i\omega),\quad C_j^{H,(N)}(\omega,A):=\dfrac{1}{|NA\cap\mathbb{Z}|}\sum_{i\in NA\cap\mathbb{Z}}C_j^H(\tau_i\omega),\\
J_j(\delta_j)^{(N)}(\omega,A)&:=\dfrac{1}{|NA\cap\mathbb{Z}|}\sum_{i\in NA\cap\mathbb{Z}}J_j(\omega,i,\delta_j(\tau_i\omega)).
\end{align*}

\begin{proposition}
	\label{Prop:averages}
	Assume that Assumption \ref{Ass:stochasticLJ} below is satisfied. Then, there exists $\Omega_1\subset\Omega$ with $\mathbb{P}(\Omega_1)=1$ such that for all $\omega\in\Omega_1$, all $j=1,...,K$ and for all $A=[a,b]$ with $a,b\in\mathbb{R}$ the limits
	\begin{align*}
		\mathbb{E}[\delta_j]&=\lim\limits_{N\to\infty}\delta_j^{(N)}(\omega,A),\quad \mathbb{E}[C_j^H]=\lim\limits_{N\to\infty}C_j^{H,(N)}(\omega,A),\\
		\mathbb{E}[J_j(\delta_j)]&=\lim\limits_{N\to\infty}J_j(\delta_j)^{(N)}(\omega,A)
	\end{align*}
	exist in $\overline{\mathbb{R}}$ and are independent of $\omega$ and the interval $A$. 
\end{proposition}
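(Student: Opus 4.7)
The plan is to invoke Birkhoff's pointwise ergodic theorem for the stationary ergodic $\mathbb{Z}$-action $(\tau_i)_{i\in\mathbb{Z}}$, applied to each of the three random variables $\delta_j$, $\tilde{J}_j(\cdot,\delta_j(\cdot))$ and $C_j^H$, for $j=1,\dots,K$. First I would verify their integrability. From (LJ2), $\delta_j(\omega)\in(1/d,d)$ uniformly in $\omega$, and $\tilde{J}_j(\omega,\delta_j(\omega))\in[-d,0)$ uniformly as well: the upper bound follows from $J(\delta)<0$, while the lower bound comes from evaluating $\Psi(z)/d-d\leq J(z)$ at $z=\delta$ and using $\Psi\geq 0$. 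Hence $\delta_j$ and $\tilde{J}_j(\cdot,\delta_j(\cdot))$ are bounded, in particular integrable. Integrability of $C_j^H$ is precisely the content of Assumption (H1).

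Next, Birkhoff's theorem for the ergodic $\mathbb{Z}$-action yields, for every $f\in L^1(\Omega)$, a full-measure set $\Omega_f\subset\Omega$ on which
\[
  \lim_{N\to\infty}\frac{1}{N}\sum_{i=0}^{N-1}f(\tau_i\omega)=\mathbb{E}[f].
\]
Applying this to the finitely many variables above (namely $3K$ of them) and intersecting, one obtains a full-measure set $\Omega_1$ on which all one-sided averages from $0$ to $N-1$ converge to the corresponding expectations. Since the limits equal the expectations almost surely, they are already deterministic, giving $\omega$-independence for free.

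Finally, to handle averages over $NA\cap\mathbb{Z}$ for an arbitrary bounded interval $A=[a,b]$, I would exploit the telescoping identity
\[
  \sum_{i=\lceil Na\rceil}^{\lfloor Nb\rfloor} f(\tau_i\omega)=\sum_{i=0}^{\lfloor Nb\rfloor} f(\tau_i\omega)-\sum_{i=0}^{\lceil Na\rceil-1} f(\tau_i\omega)
\]
for $a\geq 0$, together with an analogous decomposition based on the backward shift $\tau_{-1}$ (which is also ergodic and measure preserving) when $a<0$. Since $|NA\cap\mathbb{Z}|/N\to b-a$ and each one-sided sum on the right-hand side, normalized by its own length, converges to $\mathbb{E}[f]$ on $\Omega_1$, the average over $NA\cap\mathbb{Z}$ converges to $\mathbb{E}[f]$ as well, independently of $A$ and of $\omega\in\Omega_1$.

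The only subtlety — more a bookkeeping point than a real obstacle — is ensuring that a single null set suffices simultaneously for all intervals $A$ (which are uncountably many). This is automatic in the argument above, because the decomposition reduces every interval-indexed average to one-sided averages anchored at $0$ or $-1$, so only the finite intersection of $3K$ Birkhoff null sets (and their mirror images under $\tau_{-1}$) is needed, and no uncountable intersection appears.
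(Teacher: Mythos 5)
Your proposal is correct, and it takes a genuinely different route from the paper. The paper follows the approximation scheme of Dal Maso--Modica (also used in Neukamm--Sch\"affner--Schl\"omerkemper, Lemma~3.9): first obtain convergence on the countable family of intervals with endpoints in $\mathbb{Z}$ via the ergodic theorem (one null set per interval, countable intersection), then sandwich a general interval $A$ between inner and outer integer-endpoint intervals and pass to the limit using the uniform bound on $\delta_j$, $J_j(\delta_j)$ (boundedness, giving a two-sided error term $\tfrac{|N(A\setminus B)\cap\mathbb{Z}|}{|NA\cap\mathbb{Z}|}C$) and the nonnegativity of $C_j^H$ (giving a monotonicity estimate). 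Your argument instead exploits the one-dimensional structure directly: every sum over $NA\cap\mathbb{Z}$ telescopes into a difference of at most two sums anchored at the origin (or at $-1$ via the backward shift $\tau_{-1}$, also ergodic since the invariant $\sigma$-algebras of $\tau_1$ and $\tau_{-1}$ coincide), each of which Birkhoff handles on a single full-measure set, and the normalization $|NA\cap\mathbb{Z}|/N\to b-a$ does the rest. This avoids the inner/outer approximation step entirely and needs only a finite intersection of null sets; the trade-off is that the telescoping device is special to dimension one, whereas the paper's sandwich argument is the one that generalizes to multi-dimensional stochastic homogenization, which is presumably why the authors chose it. The integrability checks you make (uniform bounds $\delta_j\in(1/d,d)$ and $J_j(\delta_j)\in[-d,0)$ from (LJ2), and (H1) for $C_j^H$) match what the paper relies on.
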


\begin{proof}
The proof follows the same argumentation as the proofs in \cite[Proposition 1]{DalMasoModica1985} or \cite[Lemma 3.9]{NeukammSchaeffnerSchloemerkemper2017}. The main difference is the formula for the approximation argument from intervals with edges in $\mathbb{Z}$ to general intervals in $\mathbb{R}$. This formula will be briefly given. For $\delta_j^{(N)}(\omega,A)$, we get for all intervals $B\subset A$ the inequality 
	\begin{align*}
		\delta_j^{(N)}(\omega,A)\leq \delta_j^{(N)}(\omega,B)+\dfrac{|N(A\setminus B)\cap\mathbb{Z}|}{|NA\cap\mathbb{Z}|}C,
	\end{align*}
	using $\delta_j(\omega)\leq d$ due to (LJ2), which can be seen by the calculation
	\begin{align*}
	\delta_j^{(N)}(\omega,A)&=\dfrac{1}{|NA\cap\mathbb{Z}|}\sum_{i\in NA\cap\mathbb{Z}}\delta_j(\tau_i\omega)\leq \dfrac{1}{|NA\cap\mathbb{Z}|}\sum_{i\in NB\cap\mathbb{Z}}\delta_j(\tau_i\omega)+\dfrac{1}{|NA\cap\mathbb{Z}|}\sum_{i\in NA\cap\mathbb{Z}}C\\
	&\leq \dfrac{1}{|NB\cap\mathbb{Z}|}\sum_{i\in NB\cap\mathbb{Z}}\delta_j(\tau_i\omega)+\dfrac{|N(A\setminus B)\cap\mathbb{Z}|}{|NA\cap\mathbb{Z}|}C.
	\end{align*}
The formula for $J_j(\delta_j)^{(N)}(\omega,A)$ is exactly the same, since it also holds true that $J_j(\delta_j)(\omega)$ are bounded.\\

The formula for $C_j^{H,(N)}(\omega,A)$ uses $C_j^H(\omega)>0$, as follows:
\begin{align*}
C_j^{H,(N)}(\omega,A)&=\dfrac{1}{|NA\cap\mathbb{Z}|}\sum_{i\in NA\cap\mathbb{Z}}C_j^H(\tau_i\omega)\geq \dfrac{1}{|NA\cap\mathbb{Z}|}\sum_{i\in NB\cap\mathbb{Z}}C_j^H(\tau_i\omega)=\dfrac{|NB\cup\mathbb{Z}|}{|NA\cup\mathbb{Z}|} C_j^{H,(N)}(\omega,B).
\end{align*}
\end{proof}

\subsection{Energy}

The assumptions on the stochastic setting of the chain with Lennard-Jones type interaction potentials are summarized in 
\begin{assumption}
\label{Ass:stochasticLJ}
The following is satisfied:
\begin{itemize}
\item $(\Omega,\mathcal{F},\mathbb{P})$ is a probability space. 
\item The family $(\tau_i)_{i\in\mathbb{Z}}$ of measurable mappings is an additive group action which is stationary and ergodic.
\item Assumptions (H1) holds true.
\item The potentials $J_j$ are of Lennard-Jones type, i.e., $J_j\in\mathcal{J}(\alpha,b,d,\Psi)$.
\end{itemize} 
\end{assumption}

Let $u\in\mathcal{A}_n$ be a given deformation. Then we define the energy of the chain of atoms for $K$ interacting neighbours by
\begin{align} \label{eq:energy}
	E_n(\omega,u):=\sum_{j=1}^{K}\sum_{i=0}^{n-j}\lambda_nJ_j\left(\omega,i,\dfrac{u^{i+j}-u^{i}}{j\lambda_n}\right).
\end{align}
For a given $\ell>0$, we take the boundary conditions into account by considering the functional $E_n^{\ell}:\Omega\times L^1(0,1)\rightarrow(-\infty,+\infty]$ defined by 
\begin{align} 
\label{Enl}
E_n^{\ell}(\omega,u):=\begin{cases}
E_n(\omega,u)\quad&\text{if}\ u\in\mathcal{A}_n\ \text{and}\ 	u(0)=0,\ u(1)=\ell,\\+\infty\quad&\text{else.}
\end{cases}
\end{align}
Its $\Gamma$-limit will involve the function $J_{\mathrm{hom}}:\mathbb{R}\to(-\infty,+\infty]$, which is defined by $J_{\rm hom}(z):=\lim_{N\to\infty}\mathbb E\left[J_{\rm hom}^{(N)}(\cdot,z)\right]$ and turns out to be equal to $J_{\mathrm{hom}}(z)=\lim\limits_{N\rightarrow\infty}J_{\mathrm{hom}}^{(N)}(\omega,z)$, with
\begin{align}
\label{JhomNDef}
\begin{split}
	J_{\mathrm{hom}}^{(N)}(\omega,z):=\dfrac{1}{N}\inf\left\{\sum_{j=1}^{K}\sum_{i=0}^{N-j}J_j\left(\omega,i,z+\dfrac{\phi^{i+j}-\phi^{i}}{j}\right),\ \phi^i\in\mathbb{R},\ \phi^s=\phi^{N-s}=0\right.\\ \text{for}\ s=0,...,K-1  \Bigg\},
	\end{split}
\end{align}
Note that in the stochastic setting this infinite cell formula can not be reduced to a finite cell formula as is the case in the periodic or convex setting. However, the infinite cell formula can equivalently be written as:
\begin{align*}
\begin{split}
	J_{\mathrm{hom}}^{(N)}(\omega,z)=\dfrac{1}{N}\inf\left\{\sum_{j=1}^{K}\sum_{i=0}^{N-j}J_j\left(\omega,i,\frac{1}{j}\sum_{k=i}^{i+j-1}z^{k}\right),\ z^i\in\mathbb{R}, \sum_{i=0}^{N-1}z^{i}=Nz,\ z^s=z^{N-s-1}=z\right.\\   \text{for}\ s=0,...,K-2   \Bigg\}.
	\end{split}
\end{align*}

\section{Continuum limit -- the main result} \label{sec:main}

We provide a $\Gamma$-convergence result for the sequence $(E_n^\ell)$ given in \eqref{Enl}. To this end, we first recall suitable function spaces from \cite{BraidesDalMasoGarroni1999,ScardiaSchloemerkemperZanini2011} which capture the Dirichlet boundary conditions in $u_n(0)=0$ and $u_n(1)=\ell$ in the limit. For given $\ell>0$, we denote by $BV^\ell(0,1)$ the space of bounded variations in $(0,1)$ and in order to measure jumps at the boundary, we set $u(0-)=0$ and $u(1+)=\ell$. For $u\in BV^\ell(0,1)$, we set $S_u:=\{x\in[0,1]\,|\, [u](x)\neq0\}$ with $[u](x):=u(x+)-u(x-)$ where $u(x+)$ for $x\in[0,1)$ and $u(x-)$ for $x\in(0,1]$ are the right and left essential limits at $x$. For $u\in BV(0,1)$, we label by $D^au$ the absolutely continuous part and by $D^su$ the singular part of the measure $Du$ with respect to the Lebesgue measure $\mathcal{L}^1$. Further, the density of $D^au$ is denoted by $u'\in L^1(0,1)$, i.e.~$D^au=u'\mathcal{L}^1$. For $u\in BV^\ell(0,1)$ we extend $D^su$ to $[0,1]$ by 
$$D^su:=\sum_{x\in S_u} [u](x)\delta_x + D^cu,$$
where $D^cu$ denotes the diffusive part of the measure $D^su$. 

\begin{theorem}\label{Thm:nullteordnung}
Assume that Assumption \ref{Ass:stochasticLJ} is satisfied. Let $\ell>0$. Then, there exists a set $\Omega'\subset\Omega$ with $\mathbb{P}(\Omega')=1$ such that for all $\omega\in\Omega'$ the $\Gamma$-limit of $E_n^{\ell}(\omega,\cdot)$ with respect to the $L^1(0,1)$-topology is $E_{\mathrm{hom}}^{\ell}:L^1(0,1)\rightarrow(-\infty,+\infty]$, given by
\begin{align*}
E_{\mathrm{hom}}^\ell(u)=\begin{cases}
	\displaystyle\int_{0}^{1}J_{\mathrm{hom}}(u'(x))\,\mathrm{d}x\quad&\text{if}\ u\in BV^{\ell}(0,1),\ \mathrm{D}^su\geq0,\\
	+\infty\quad&\text{else.}
	\end{cases}
\end{align*}
with 
\begin{align}\label{def:Jhomtheorem}
\begin{split}
	J_{\mathrm{hom}}(z)=\lim\limits_{N\rightarrow\infty}\dfrac{1}{N}\inf\left\{\sum_{j=1}^{K}\sum_{i=0}^{N-j}J_j\left(\omega,i,z+\dfrac{\phi^{i+j}-\phi^{i}}{j}\right),\ \phi^i\in\mathbb{R},\right.\\ \phi^s=\phi^{N-s}=0\ \text{for}\ s=0,...,K-1  \Bigg\}.
\end{split}
\end{align}
Moreover, the minimum values of $E_{n}^{\ell}(\omega,\cdot)$ and $E_{\mathrm{hom}}^\ell$ satisfy
\begin{align*}
 \lim\limits_{n\to\infty}\inf_u E_{n}^{\ell}(\omega,u)=\min_u E_{\mathrm{hom}}^{\ell}(u)=J_{\mathrm{hom}}(\ell).
\end{align*}
\end{theorem}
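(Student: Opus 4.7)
The plan is to prove the theorem in four stages: existence of $J_{\mathrm{hom}}$, the liminf inequality, the limsup inequality, and convergence of minima via equicoercivity. The singularity of the potentials at $0$ prevents a direct application of the subadditive ergodic theorem, so I would work throughout with a Lipschitz truncation $J_j^L$ of $J_j$ (obtained by replacing the blow-up near $\tfrac1d$ by a tangent/linear extension at level $L$), introduce the corresponding cell formula $J_{\mathrm{hom}}^{(N),L}$, and then pass to $L\to\infty$. For fixed $L$ the map $A\mapsto |A|J_{\mathrm{hom}}^{(|A|),L}(\omega,z)$ is (up to a $K$-dependent boundary correction) subadditive and covariant under $(\tau_i)$, so Theorem~\ref{Thm:AkcogluKrengel} of Akcoglu–Krengel gives a deterministic limit $J_{\mathrm{hom}}^L(z)$ on a full-measure set. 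Monotonicity in $L$ together with (LJ2) and (LJ3) yields the identification $J_{\mathrm{hom}}(z)=\lim_{L\to\infty}J_{\mathrm{hom}}^L(z)$, and by separability of $\mathbb Q\cap\mathbb R$ and lower semicontinuity the exceptional set $\Omega'$ can be chosen independent of $z$.

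For the liminf inequality I would fix $\omega\in\Omega'$ and a sequence $u_n\to u$ in $L^1(0,1)$ with bounded energy. The growth bound from below in (LJ2) via $\Psi$, combined with the Dirichlet conditions, gives a BV bound; the monotone ordering of one-dimensional chains together with (LJ3) forces the singular part $D^s u$ to be non-negative. The heart of the argument is the introduction of two coarse scales $\lambda_n\ll 1/M\ll \eta\ll 1$: on each mesoscopic interval $I_k$ of length $\eta$ I would freeze an averaged slope $\bar z_k$, split each $I_k$ further into blocks of size $1/M$, and compare the discrete energy on each block to the infimum defining $J_{\mathrm{hom}}^{(N),L}$. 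The inner scale averages over the randomness via Proposition~\ref{Prop:averages}, while the outer scale absorbs the $O(K)$ boundary terms arising from cutting $K$-range interactions. Passing $n\to\infty$, then $M\to\infty$, then $L\to\infty$, and finally $\eta\to 0$, and using Jensen's inequality together with the convexity of $J_{\mathrm{hom}}^L$ (inherited from the cell formula), yields $\liminf_n E_n^\ell(\omega,u_n)\geq \int_0^1 J_{\mathrm{hom}}(u'(x))\,\mathrm{d}x$.

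For the limsup inequality I would build recovery sequences hierarchically. First, for an affine target $u(x)=zx$ with $z\leq\ell$, I would tile $[0,1]$ with near-optimal test configurations $\phi^i$ from the definition \eqref{def:Jhomtheorem} of $J_{\mathrm{hom}}^{(N)}$ at scale $N\ll n$ and accommodate the boundary value $\ell$ by inserting a single jump of height $\ell-z\geq 0$; thanks to (LJ3) the stretched bond carries vanishing energy as $n\to\infty$, so this jump is free. A diagonal argument extends the construction to piecewise affine $u$, and uniform continuity of $J_{\mathrm{hom}}$ on compact subsets of its finiteness domain gives recovery sequences for $u\in W^{1,1}(0,1)$ with $u'\geq 0$ a.e. plus a single admissible jump. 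The general case $u\in BV^\ell(0,1)$ with $D^su\geq 0$ is then handled by the relaxation result of Gelli (Theorem~\ref{PropGelli}), noting that the effective surface energy $J_{\mathrm{hom}}(+\infty)=0$.

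Finally, equicoercivity follows from the $\Psi$-coercivity in (LJ2) combined with the boundary conditions, so $\Gamma$-convergence upgrades to convergence of minima. The identification $\min E_{\mathrm{hom}}^\ell = J_{\mathrm{hom}}(\ell)$ follows from the affine competitor $u(x)=\ell x$ for the upper bound and Jensen's inequality using convexity of $J_{\mathrm{hom}}$ for the lower bound. The main obstacle is the liminf inequality: one must simultaneously control the randomness (requiring one coarse scale for ergodic averaging of the Hölder and minimum data from (H1) and Proposition~\ref{Prop:averages}), the $K$-range boundary corrections (requiring a second, coarser scale), and the unbounded/non-convex nature of the potentials (requiring the $L$-approximation and a careful interchange of limits in $n$, $M$, $L$, $\eta$).
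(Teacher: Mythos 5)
Your outline captures the paper's strategy in all essentials: the cell formula is established via a Lipschitz truncation $J^L$ and Akcoglu--Krengel with the $z$-independent exceptional set recovered through a H\"older-continuity argument and density of $\mathbb Q$; compactness comes from the $\Psi$-coercivity in (LJ2); the liminf uses two artificial scales before sending $n\to\infty$; the limsup proceeds affine $\to$ piecewise affine $\to W^{1,1}\to BV$ via Gelli's relaxation theorem; and equicoercivity yields convergence of minima, with $\min E_{\mathrm{hom}}^\ell = J_{\mathrm{hom}}(\ell)$ from Jensen plus the affine competitor.

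A few points deserve sharpening, the second being the substantive one. First, the truncation $J^{z^*}_j$ linearizes the singularity as $z\to 0^+$; the level $z^*$ merely lies in $(0,1/d)$, so it is not a blow-up ``near $1/d$'' that is removed. Second, in the liminf the real difficulty you must address is admissibility: the interpolation $\hat u_n$ restricted to a mesoscopic interval does not satisfy the boundary constraints $\phi^s=\phi^{N-s}=0$, $s=0,\dots,K-1$, of the infimum defining $J_{\mathrm{hom}}^{(N)}$, so ``compare the discrete energy on each block to the infimum'' is not yet a valid inequality. In the paper the role of the finer scale $\epsilon$ is precisely this: inside each $[t_m,t_{m+1})$ two margin points $a_m,b_m$ at distance $O(\epsilon)$ from the endpoints are chosen so that $\hat u_n$ is replaced near the boundary by a linear ramp with two compensating jumps, producing a genuine competitor $\tilde u_n$; the remainders this modification creates are then shown to vanish as $\epsilon\to 0$ (using (LJ2)--(LJ3) and the boundedness of $z^\epsilon_{n,m}$), and only the coarse $\delta$-partition plays the role of your $\eta$. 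Your ``blocks of size $1/M$'' inner scale for ergodic averaging is superfluous, since the $n\to\infty$ limit on each fixed $\delta$-interval already delivers $J^{(n)}_{\mathrm{hom}}\to J_{\mathrm{hom}}$ by Proposition~\ref{Prop:Jhom}. Third, the paper proves the limsup first without boundary conditions and appends the Dirichlet data by a one-jump modification as in Braides--Dal Maso--Garroni and the Gelli relaxation, rather than inserting the jump of height $\ell-z$ into the affine recovery sequence; your variant also works because $J_{\mathrm{hom}}$ is monotone decreasing with finite limit at $+\infty$, so an added jump carries no energy cost in the limit. Finally, in the paper the $L$-limit is carried out entirely within the preliminary propositions establishing $J_{\mathrm{hom}}$, so the liminf step manipulates $J^{(n)}_{\mathrm{hom}}$ directly and never needs an explicit iterated $L\to\infty$ inside the $\Gamma$-limit proof.
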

(The proof of Theorem~\ref{Thm:nullteordnung} can be found in Section~\ref{sec:proof-thm})

\smallskip

The possible blow-up of the interaction potentials combined with their non-convexity prevents to use well-established homogenization methods directly to prove Theorem~\ref{Thm:nullteordnung}. In fact a main preliminary result for the Theorem~\ref{Thm:nullteordnung} is to show that the asymptotic cell formula \eqref{def:Jhomtheorem} is well-defined. For given $z\in\mathbb R$ the limit  \eqref{def:Jhomtheorem} exists by the subadditive ergodic theorem of Akcoglu and Krengel \cite{AkcogluKrengel1981}. However, the exceptional set depends on $z$ in general. Assuming polynomial growth from above on the interaction potentials this problem can be solved by a continuity argument, see e.g.\ \cite{DalMasoModica1985,NeukammSchaeffnerSchloemerkemper2017}. Due to the uncontrolled growth of the Lennard-Jones type interaction, we cannot apply this argument directly and we introduce an additional approximation procedure, see Section~\ref{sec:cellformula} below for more details. 

Next, we state the result regarding the limit \eqref{def:Jhomtheorem} and list some of the main properties of $J_{\rm hom}$. For given $A=[a,b)$, $a,b\in\mathbb{R}$, we set
\begin{align}\label{def:Jhomlocal}
J_{\mathrm{hom}}^{(N)}(\omega,z,A):=\dfrac{1}{|NA\cap\mathbb{Z}|}\inf\left\{\sum_{j=1}^{K}\sum_{\substack{i\in \mathbb Z \cap NA\\ i+j-1\in NA }}J_j\left(\omega,i,z+\dfrac{\phi^{i+j}-\phi^{i}}{j}\right),\,\phi\in \mathcal A_{N,K}^0(A)\right\},
\end{align}
where
\begin{equation}\label{def:ANK0}
 \mathcal A_{N,K}^0(A):=\left\{\phi:\mathbb Z\to\mathbb R,\;\,\mbox{$\phi^i=0$ for $\left| \min\limits_{j\in AN\cap\mathbb{Z}}\{j\}-i \right|<K$ or $\left| \max\limits_{j\in AN\cap\mathbb{Z}}\{j\}+1-i \right|<K$}\right\}
\end{equation}

\begin{proposition}\label{Prop:Jhom}
Assume that Assumption \ref{Ass:stochasticLJ} is satisfied. There exists $\Omega_0\subset\Omega$ with $\mathbb{P}(\Omega_0)=1$ such that the following is true: For all $\omega\in\Omega_0$, $z\in\mathbb{R}$ and $A:=[a,b)$ with $a,b\in\mathbb R$ it holds
\begin{equation}\label{eq:defJhom}
 J_{\rm hom}(z):=\lim_{N\to\infty}\mathbb E\left[J_{\rm hom}^{(N)}(\cdot,z,[0,1))\right]=\lim_{N\to\infty} J_{\rm hom}^{(N)}(\omega,z,A).
\end{equation}	
The map $z\mapsto J_{\mathrm{hom}}(z)$ is convex, lower semicontinuous, monotonically decreasing and satisfies
\begin{align}\label{superlinwachstum}
 \lim\limits_{z\rightarrow-\infty}\dfrac{J_{\mathrm{hom}}(z)}{|z|}=+\infty.
\end{align}
Moreover, it holds for every $\omega\in\Omega_0$ and $A:=[a,b)$, $a,b\in\mathbb R$
\begin{equation}\label{eq:gammalimjhom}
 \Gamma\text{-}\lim_{N\to\infty}J_{\rm hom}^{(N)}(\omega,\cdot,A)=J_{\rm hom}.
\end{equation}	
\end{proposition}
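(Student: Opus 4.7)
The proof has three components: existence of the limit \eqref{eq:defJhom} on a single null-set-free event, the qualitative properties (convexity, lower semicontinuity, monotonicity, superlinearity), and the $\Gamma$-convergence \eqref{eq:gammalimjhom}. The overall strategy is to reduce to an approximating cell problem whose potentials are Lipschitz and bounded, apply the Akcoglu--Krengel subadditive ergodic theorem to it, and only then pass to the Lennard-Jones limit.

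For existence I would work with the Lipschitz truncation $J_j^L$ and the associated cell problem $J_{\rm hom}^{(N),L}$ from Section~\ref{sec:cellformula}. At fixed $L$, Proposition~\ref{Prop:existencejhom} provides, for each rational $z$ and each interval $A=[a,b)$ with rational endpoints, an almost-sure limit $\lim_N J_{\rm hom}^{(N),L}(\omega,z,A)=J_{\rm hom}^L(z)$ that is deterministic by ergodicity and coincides with $\lim_N\mathbb{E}[J_{\rm hom}^{(N),L}(\cdot,z,A)]$ by the $L^1$-statement of Akcoglu--Krengel. The uniform Lipschitz bound for $J_j^L$ in $z$ lets me interpolate from a dense set of rational $z$ to all $z\in\mathbb R$, and the interval-approximation argument in the proof of Proposition~\ref{Prop:averages} extends this to all $A$, so the exceptional set $\Omega_0^L$ can be chosen uniform in $(z,A)$. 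Setting $\Omega_0:=\bigcap_{L\in\mathbb N}\Omega_0^L$ and invoking Proposition~\ref{Prop:Jhomzklein} to identify $J_{\rm hom}(z)=\lim_L J_{\rm hom}^L(z)$ then yields \eqref{eq:defJhom}.

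The qualitative properties are derived at the level of $J_{\rm hom}^{(N)}$ (or $J_{\rm hom}^{(N),L}$) and transferred to the limit. For \emph{convexity} I would use a concatenation argument: given near-optimal competitors for $z_1,z_2$ on subintervals of lengths $M_1\approx\lambda N$ and $M_2\approx(1-\lambda)N$, gluing them together via a bounded transition region of size $O(K)$ (needed to restore the boundary conditions in \eqref{def:ANK0}) yields a competitor for $\lambda z_1+(1-\lambda)z_2$ with per-atom energy $\lambda J_{\rm hom}^{(M_1)}(z_1)+(1-\lambda)J_{\rm hom}^{(M_2)}(z_2)+O(1/N)$. \emph{Monotonicity} comes from a crack-opening argument: for $z_1<z_2$, insert a single discontinuity of height $(z_2-z_1)N$ into any competitor for $z_1$; by (LJ2) the $K$ affected bonds each cost at most $b$, which vanishes after dividing by $N$. \emph{Lower semicontinuity} is then automatic, since a convex function on $\mathbb R$ is continuous on the interior of its effective domain. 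The \emph{superlinearity} \eqref{superlinwachstum} is immediate, because $J_j(z)=+\infty$ for $z\le 0$ forces $J_{\rm hom}(z)=+\infty$ on $(-\infty,0]$; a finer blow-up near $0^+$ follows from $J_j\ge\frac1d\Psi-d$ combined with Jensen's inequality.

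The $\Gamma$-convergence \eqref{eq:gammalimjhom} then follows from pointwise convergence plus convexity: the constant sequence $z_N\equiv z$ is a $\Gamma$-limsup recovery sequence, and for a general $z_N\to z$ the $\Gamma$-liminf is obtained by first comparing $J_{\rm hom}^{(N),L}(\omega,z_N,A)$ with $J_{\rm hom}^{(N),L}(\omega,z,A)$ via the uniform Lipschitz continuity of $J_{\rm hom}^L$ (at fixed $L$), then sending $N\to\infty$ and finally $L\to\infty$. The main obstacle throughout is the first step: producing an $\omega$-uniform statement valid simultaneously for every $z$ and every $A$, since the continuity-in-$z$ trick from polynomial-growth stochastic homogenization (cf.\ \cite{DalMasoModica1985,NeukammSchaeffnerSchloemerkemper2017}) fails for potentials that may blow up. The Lipschitz truncation, together with the controlled passage $L\to\infty$ furnished by Proposition~\ref{Prop:Jhomzklein}, is precisely what restores the necessary uniformity.
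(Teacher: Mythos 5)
Your proposal follows the paper's strategy point by point: approximate by the Lipschitz truncations $J_j^L$, apply Akcoglu--Krengel at that level (Proposition~\ref{Prop:existencejhom}), interpolate from rational $(z,A)$ using the Lipschitz modulus, intersect over $L$, identify $J_{\rm hom}=\lim_L J_{\rm hom}^L$ via Proposition~\ref{Prop:Jhomzklein}, establish the qualitative properties, and read off $\Gamma$-convergence from pointwise convergence. The concatenation argument for convexity and the crack-opening argument for monotonicity are also the paper's methods (convexity is carried out at the $J^L$-level in Proposition~\ref{Prop:jhomlcontinuous} and transferred by taking pointwise limits; monotonicity is done directly at the $J_{\rm hom}^{(N)}$-level in Step~5).

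One step, however, is wrong as stated and would not go through. Lower semicontinuity is \emph{not} automatic from the observation that a convex function on $\mathbb R$ is continuous on the interior of its effective domain: interior continuity says nothing about the boundary of the domain. For instance, $f(x)=0$ for $x>0$, $f(0)=1$, $f(x)=+\infty$ for $x<0$ is convex, continuous on $(0,\infty)$, and not lower semicontinuous at $0$. What closes LSC at $z=0$ is precisely the blow-up $\lim_{z\to 0^+}J_{\rm hom}(z)=+\infty$, which the paper obtains from the Jensen bound $J_{\rm hom}(z)\ge\frac{1}{d}K\Psi(z)-Kd$ together with \eqref{growthpsi}, and which, combined with $J_{\rm hom}\equiv+\infty$ on $(-\infty,0]$, gives global LSC. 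You actually mention this Jensen estimate in passing under superlinearity, so the ingredient is present in your proposal, but it is doing no logical work where you placed it: move it into the LSC step. A smaller caveat: in the crack-opening competitor for monotonicity you must also adjust the $2(K-1)$ boundary slots $z^s=z^{N-s-1}$ (not just insert one large value in the interior) so as to lie in the admissible class for the new $z$; the paper's $\tilde z_N$ in Step~5 modifies $O(K)$ sites for this reason, which is still enough for the per-atom excess to vanish.
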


In the case of only nearest neighbour interactions, that is $K=1$, we get a finer result, illustrated in Figure \ref{fig:jhom}.

\begin{proposition}
\label{Prop:Jhomzgross}
	Assume that Assumption \ref{Ass:stochasticLJ} is satisfied and set $K=1$. There exists $\Omega_0\subset\Omega$ with $\mathbb{P}(\Omega_0)=1$ such that the following is true: For all $\omega\in\Omega_0$, $$J_{\mathrm{hom}}(z)=\mathbb{E}[J_1(\delta)] \quad \mbox{ for all } z\geq \mathbb{E}[\delta_1]. $$
\end{proposition}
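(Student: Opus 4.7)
My approach is to sandwich $J_{\mathrm{hom}}(z)$ between two copies of $\mathbb{E}[J_1(\delta_1)]$ for $z>\mathbb{E}[\delta_1]$, and then transfer the identity to the boundary case $z=\mathbb{E}[\delta_1]$ by lower semicontinuity. For the lower bound (valid for \emph{every} $z\in\mathbb{R}$), note that since $\delta_1(\tau_i\omega)$ is by definition the unique minimizer of $J_1(\omega,i,\cdot)$, every competitor $(\phi^i)$ admissible in \eqref{JhomNDef} with $K=1$ satisfies the pointwise bound
\[
J_1\bigl(\omega,i,z+\phi^{i+1}-\phi^i\bigr)\;\ge\;J_1\bigl(\omega,i,\delta_1(\tau_i\omega)\bigr).
\]
Summing over $i$, dividing by $N$, and passing to the limit via Proposition~\ref{Prop:averages} (applied with $A=[0,1)$) yields $J_{\mathrm{hom}}(z)\ge\mathbb{E}[J_1(\delta_1)]$ on a set of full $\mathbb{P}$-measure.

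For the upper bound at $z>\mathbb{E}[\delta_1]$, I would build an explicit competitor that pins every interior bond at its minimizer and absorbs the boundary defect into the final bond. Concretely, set $\phi^0:=0$, $\phi^{i+1}:=\phi^i+(\delta_1(\tau_i\omega)-z)$ for $i=0,\dots,N-2$, and $\phi^N:=0$. The first $N-1$ bonds then contribute exactly $J_1(\omega,i,\delta_1(\tau_i\omega))$, while the last strain equals $Nz-\sum_{i=0}^{N-2}\delta_1(\tau_i\omega)$, which by Proposition~\ref{Prop:averages} grows like $N(z-\mathbb{E}[\delta_1])\to+\infty$. For $N$ large enough (depending on $\omega$) this strain exceeds $d\ge\delta_1(\tau_{N-1}\omega)$, so the uniform bound $\|J\|_{L^\infty(\delta,\infty)}<b$ from (LJ2) forces the last contribution to be at most $b$; after dividing by $N$ it is negligible, while the remaining sum converges to $\mathbb{E}[J_1(\delta_1)]$ by Proposition~\ref{Prop:averages}. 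This yields $J_{\mathrm{hom}}(z)\le\mathbb{E}[J_1(\delta_1)]$, hence equality for every $z>\mathbb{E}[\delta_1]$.

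For the boundary case $z=\mathbb{E}[\delta_1]$ the explicit construction degenerates, since the last-bond strain is only $o(N)$ and need not diverge. I would instead invoke the lower semicontinuity of $J_{\mathrm{hom}}$ from Proposition~\ref{Prop:Jhom}: for any sequence $z_n\downarrow\mathbb{E}[\delta_1]$ one has
\[
J_{\mathrm{hom}}(\mathbb{E}[\delta_1])\;\le\;\liminf_{n\to\infty}J_{\mathrm{hom}}(z_n)\;=\;\mathbb{E}[J_1(\delta_1)],
\]
which together with the lower bound closes the boundary case. The only mildly technical point is verifying that the last-bond strain in the upper-bound construction eventually falls into the regime where (LJ2) provides its uniform $L^\infty$ bound; everything else reduces to repeated applications of the ergodic theorem from Proposition~\ref{Prop:averages} together with the lower semicontinuity from Proposition~\ref{Prop:Jhom}.
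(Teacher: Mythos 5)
Your proof is correct, and it departs from the paper's in one genuinely useful way. The lower bound (Step~1 in the paper) is identical: every admissible strain profile is dominated pointwise by the per-bond minima, and the ergodic theorem of Proposition~\ref{Prop:averages} turns the Ces\`aro average into $\mathbb{E}[J_1(\delta_1)]$. For the upper bound at $z>\mathbb{E}[\delta_1]$, you absorb the strain defect into the last bond and bound its contribution by the uniform $b$ from (LJ2) once the strain enters $(\delta_1(\tau_{N-1}\omega),\infty)$; the paper absorbs the defect into bond $0$ and sends that contribution to zero via (LJ3). Both are fine and essentially cosmetic variants of the same one-bond construction.

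The real divergence is the boundary case $z=\mathbb{E}[\delta_1]$. The paper builds a competitor $z_N^i=\delta_1(\tau_i\omega)+\beta(\omega,N)$ with $\beta(\omega,N)\to 0$ and controls the resulting error term via the averaged H\"older coefficient $C_1^{H,(N)}$, which relies on (H1), Lemma~\ref{lem:lipschitz}, and Proposition~\ref{Prop:averages}. You instead note that the upper bound at $z>\mathbb{E}[\delta_1]$ and the lower semicontinuity of $J_{\mathrm{hom}}$ (in fact continuity on $(0,\infty)$, since $J_{\mathrm{hom}}$ is convex and finite there and $\mathbb{E}[\delta_1]>1/d>0$) together give
\[
J_{\mathrm{hom}}(\mathbb{E}[\delta_1])\le \liminf_{z\downarrow\mathbb{E}[\delta_1]} J_{\mathrm{hom}}(z)=\mathbb{E}[J_1(\delta_1)],
\]
and the matching lower bound closes the gap. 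This is a cleaner argument for this step: it leans on Proposition~\ref{Prop:Jhom} (already proved, no circularity) and avoids re-running the H\"older estimate with the vanishing shift $\beta(\omega,N)$. The trade-off is that it makes the proposition structurally dependent on the regularity of $J_{\mathrm{hom}}$ established earlier, whereas the paper's Step~2 is self-contained given the ergodic averages; but since Proposition~\ref{Prop:Jhom} is in any case a standing result at this point, your route is the shorter one.
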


\begin{figure}
\begin{minipage}{0.48\textwidth}
\centering
	\includegraphics[width=0.9\linewidth]{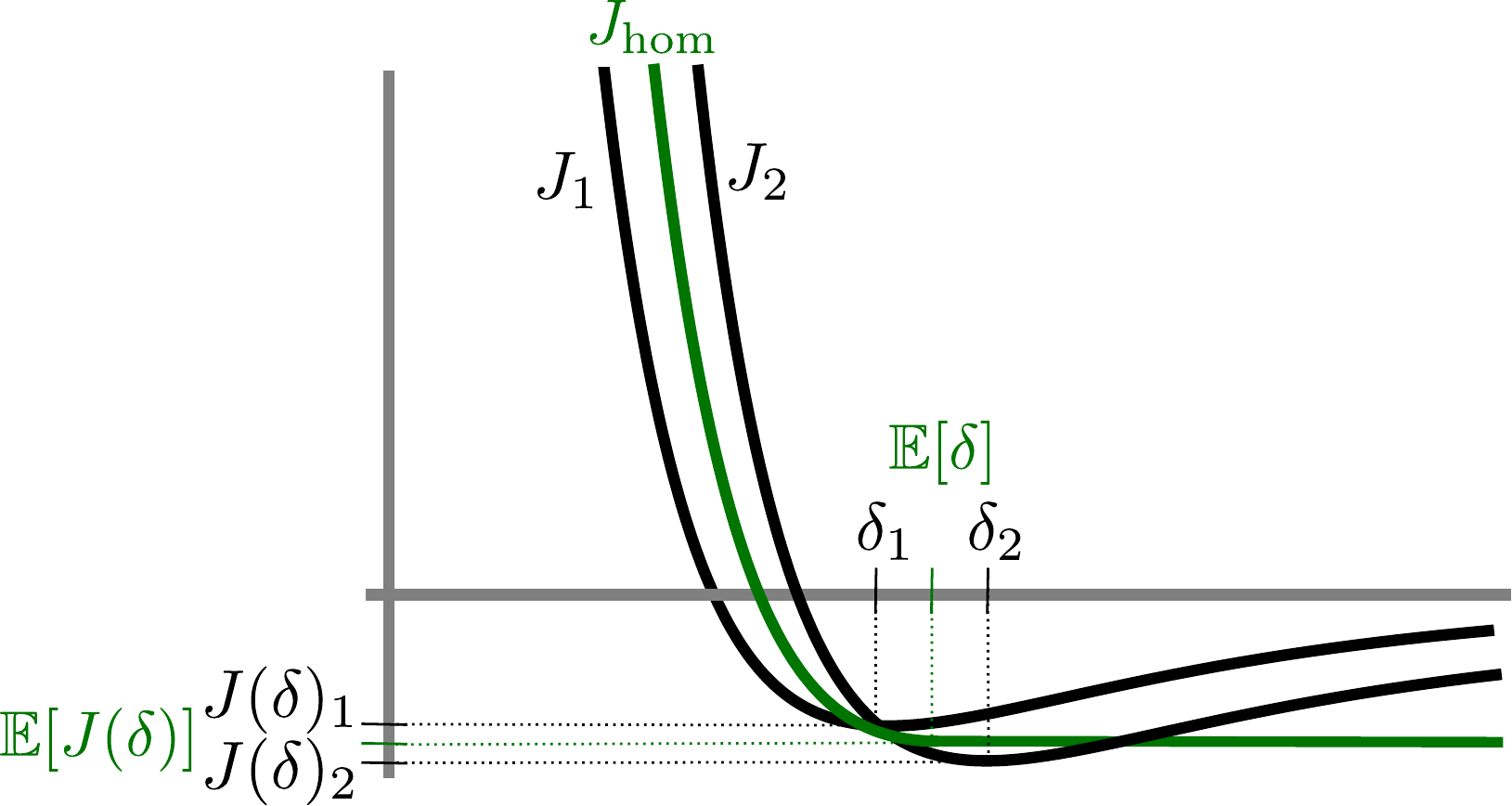}
	\caption{The function $J_{\mathrm{hom}}$ in the case $K=1$ and with two different potentials $J_1$ and $J_2$, which are equidistributed. Therefore the expectation values are equal to the arithmetic mean.}
\label{fig:jhom}
\end{minipage}\hspace{0.02\textwidth}
\begin{minipage}{0.48\textwidth}
\centering
	\includegraphics[width=0.78\linewidth]{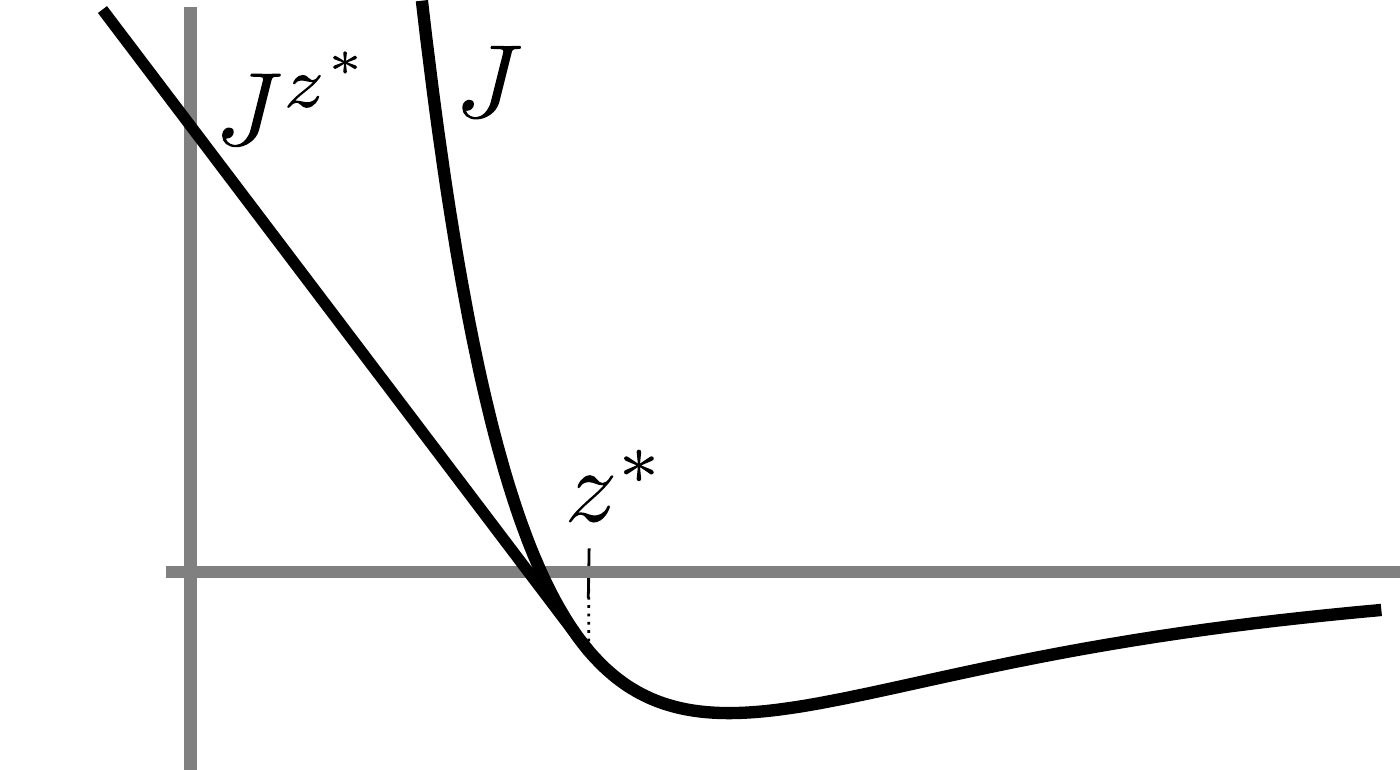}
		\caption{The function $J$ is a typical representative of a Lennard-Jones type potential and $J^{z^*}$ its $z^*$-approximation function.}
\label{Fig:cutoff}
\end{minipage}
	\end{figure}

\section{Asymptotic cell-formula}\label{sec:cellformula}

As mentioned above, the key ingredient to establish the limit \eqref{eq:defJhom} is the subadditive ergodic theorem. The difficulty here is that the Lennard-Jones type interaction potentials might blow up and in order to prove that \eqref{eq:defJhom} is valid for every $z\in\mathbb R$ and every $\omega\in\Omega'$ we need some additional arguments compared to previous results in stochastic homogenization, see e.g.~\cite{DalMasoModica1985}. We overcome the problem by a suitable approximation:

\begin{definition}
\label{Def:cutoff}
Consider a Lennard-Jones type potential $J_j(\omega,\cdot)$. By (LJ1), $J_j(\omega,z)$ has a blow up at $z\rightarrow0^+$. For having a linear growth at $z\to0^+$, we define a approximation as follows: Consider $0<z^*<\frac{1}{d}$, see (LJ2). Then, we denote the \textit{$z^*$-approximation} of $J_j(\omega,\cdot)$ by $J_j^{z^*}(\omega,\cdot)$, which is defined as
\begin{align*}
	J_j^{z^*}(\omega,z):=\begin{cases}
	m_j^{z^*}(\omega)(z-z^*)+J_j(\omega,z^*)&\quad\text{for}\ z<z^*,\\
	J_j(\omega,z)&\quad\text{for}\ z\geq z^*,
	\end{cases}
\end{align*}
with $m_j^{z^*}(\omega)\in\partial J_j(\omega,z^*)$, where $\partial J_j(\omega,z^*)$ is the subdifferential of $J_j(\omega,\cdot)$ at $z^*$. For uniqueness, we choose the smallest element of the subdifferential.
\end{definition}

Since $J_j(\omega,\cdot)$ is convex in $(0,\delta_j(\omega))$, this subdifferential is nonempty and if $J_j(\omega,\cdot)$ is differentiable in $z^*$, the derivative coincides with the subdifferential. By definition, the approximating function is continuous on its domain, H\"older-continuous on $(z^*,+\infty)$ and Lipschitz-continuous on $(-\infty,z^*)$. A Lennard-Jones type potential, together with one of its approximating functions is shown in Figure \ref{Fig:cutoff}.

For notational convenience, we set, for all $L\in\mathbb{N}$,
\begin{align*}
	J_j^L(\omega,z):=J_j^{z_L}(\omega,z),\quad\text{for all}\ z\in\mathbb{R},
\end{align*}
where $(z_L)_{L\in\mathbb{N}}\subset\mathbb{R}$ is a monotonously decreasing sequence with $z_L<\frac{1}{d}$  and $z_L\rightarrow 0$ for $L\rightarrow\infty$.

\begin{remark}
(i) It holds true that $J_j^L(\omega,z)\leq J_j(\omega,z)$ for every $z\in\mathbb{R}$ and every $L\in\mathbb{N}$. This follows directly from the definition of the subdifferential of a convex function.\\
(ii) For the approximation $J_j^L$, the estimates in \eqref{LJ2abschaetzung} in (LJ2) do not hold true any more. But, we have
\begin{align}
\label{LJschrankecutoff}
-d\leq J_j^L(\omega,z)\leq d\max\{\Psi(z),|z|\}\quad\text{for all}\ z\in\mathbb{R},\ j=1,...,K,\ \omega\in\Omega
\end{align}
by construction.
\end{remark}

In contrast to $J_j$ the interaction potentials $J_j^L$ are uniformly continuous for every $L\in\mathbb N$. Thus, we can combine the subadditiv ergodic theorem with standard techniques in stochastic homogenization (in particular) to obtain
\begin{proposition}\label{Prop:existencejhom}
Assume that Assumption \ref{Ass:stochasticLJ} is satisfied. There exists $\Omega_0\subset\Omega$ with $\mathbb{P}(\Omega_0)=1$ such that the following is true: For all $\omega\in\Omega_0$, $z\in\mathbb{R}$ and $A:=[a,b)$ with $a,b\in\mathbb R$ it holds
\begin{equation}\label{lim:JhomLN}
 J_{\rm hom}^L(z):=\lim_{N\to\infty}\mathbb E\left[J_{\rm hom}^{L,(N)}(\cdot,z,[0,1))\right]=\lim_{N\to\infty} J_{\rm hom}^{L,(N)}(\omega,z,A),
\end{equation}
where
\begin{equation}\label{def:JhomLlocal}
J_{\mathrm{hom}}^{L,(N)}(\omega,z,A):=\dfrac{1}{|NA\cap\mathbb{Z}|}\inf\left\{\sum_{j=1}^{K}\sum_{\substack{i\in \mathbb Z \cap NA\\ i+j-1\in NA }}J_j^L\left(\omega,i,z+\dfrac{\phi^{i+j}-\phi^{i}}{j}\right),\,\phi\in \mathcal A_{N,K}^0(A)\right\},
\end{equation}
and $\mathcal A_{N,K}^0(A)$ is defined in \eqref{def:ANK0}.
\end{proposition}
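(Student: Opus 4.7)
My plan is to view, for each fixed $z\in\mathbb R$, the map $I\mapsto \nu^L_z(\omega,I)$ defined as the infimum in the cell formula \eqref{def:JhomLlocal} (taken over $\phi\colon\mathbb Z\to\mathbb R$ with $\phi^i=0$ for $i$ within distance $K$ of $\partial I$, where $I\subset\mathbb Z$ is a bounded interval) as a stationary, almost-subadditive stochastic process on intervals of $\mathbb Z$, and then apply the Akcoglu--Krengel theorem (Theorem~\ref{Thm:AkcogluKrengel}) to obtain almost sure convergence of $|I|^{-1}\nu^L_z(\omega,I)$ along $I = NA\cap\mathbb Z$ as $N\to\infty$.

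First I would verify the three hypotheses of the subadditive ergodic theorem. Stationarity $\nu^L_z(\omega,I+k)=\nu^L_z(\tau_k\omega,I)$ is built into the construction $J_j(\omega,i,\cdot)=\tilde J_j(\tau_i\omega,\cdot)$. Subadditivity (up to an additive constant $B(z) = K^2 d\max\{\Psi(z),|z|\}$ independent of $|I_1|, |I_2|$) is obtained by concatenating near-optimal test configurations for two adjacent intervals $I_1,I_2$: the only new contributions arise from at most $K^2$ ``crossing'' interaction terms in which the concatenated $\phi$ is forced to vanish at both endpoints, and these are controlled by the uniform upper bound in \eqref{LJschrankecutoff}. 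Such a bounded boundary error is within the scope of Akcoglu--Krengel or its standard almost-subadditive variants. Integrability follows from the two-sided estimate $-Kd|I|\le \nu^L_z(\omega,I)\le K|I|d\max\{\Psi(z),|z|\}$, the upper bound obtained by the $\phi\equiv 0$ test and the lower bound from \eqref{LJschrankecutoff}.

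Second, Akcoglu--Krengel together with ergodicity of $(\tau_i)_{i\in\mathbb Z}$ (which forces the limit to be deterministic) would yield, for each fixed $z$, a full-measure set $\Omega_z$ on which $|NA\cap\mathbb Z|^{-1}\nu^L_z(\omega,NA\cap\mathbb Z)\to J^L_{\mathrm{hom}}(z):=\lim_N\mathbb E[J^{L,(N)}_{\mathrm{hom}}(\cdot,z,[0,1))]$ for every interval $A=[a,b)$ (reduction from integer to real endpoints is standard via $|N(A\triangle A')\cap\mathbb Z|/|NA\cap\mathbb Z|\to 0$, arguing as in Proposition~\ref{Prop:averages}). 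Intersecting over a countable dense set $z\in\mathbb Q$ gives a single full-measure set $\Omega_0':=\bigcap_{z\in\mathbb Q}\Omega_z$ on which the conclusion holds simultaneously for all rational $z$.

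The main obstacle is the final step: upgrading pointwise convergence on $\mathbb Q$ to convergence for every $z\in\mathbb R$. My plan is to prove that on a further full-measure subset, $\{J^{L,(N)}_{\mathrm{hom}}(\omega,\cdot,A)\}_N$ is equicontinuous in $z$ uniformly in $N$. The key is that the modulus of continuity of $J^L_j(\omega,i,\cdot)$ splits cleanly into two regions. On $(-\infty,z_L]$ the function is linear with slope $m^{z_L}_j(\omega)\in\partial J_j(\omega,i,z_L)$; applying Lemma~\ref{lem:lipschitz} with $\rho=z_L/2$ yields the \emph{deterministic} bound $|m^{z_L}_j(\omega)|\le C_{\mathrm{Lip}}(z_L/2)$, since the subgradient of a convex function is controlled by its local Lipschitz constant. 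On $[z_L,\infty)$ the function coincides with $J_j(\omega,i,\cdot)$, whose $\alpha$-H\"older coefficient is bounded by $C_{\mathrm{Lip}}(z_L)d^{1-\alpha}+2C^H_j(\tau_i\omega)=:\tilde C_j(\tau_i\omega)$, combining Lemma~\ref{lem:lipschitz} on $(z_L,\delta_j)$ with assumption (H1) on $(\delta_j,\infty)$, so that $\mathbb E[\tilde C_j]<\infty$. Using a near-optimal $\phi$ for $J^{L,(N)}_{\mathrm{hom}}(\omega,z,A)$ as a test configuration at $z'$ and comparing term by term then produces
$$\bigl|J^{L,(N)}_{\mathrm{hom}}(\omega,z,A) - J^{L,(N)}_{\mathrm{hom}}(\omega,z',A)\bigr| \le \frac{K}{|NA\cap\mathbb Z|}\sum_{j=1}^K\sum_{i\in NA\cap\mathbb Z}\bigl(\tilde C_j(\tau_i\omega)|z-z'|^\alpha + C_{\mathrm{Lip}}(z_L/2)|z-z'|\bigr).$$
Birkhoff's pointwise ergodic theorem applied to the integrable random variables $\tilde C_j(\tau_\cdot\omega)$ bounds the right-hand side by a constant multiple of $|z-z'|^\alpha + |z-z'|$ for all sufficiently large $N$ and all $\omega$ in a further full-measure set $\Omega_0''$. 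Setting $\Omega_0:=\Omega_0'\cap\Omega_0''$ and combining this equicontinuity with the pointwise convergence already established on $\mathbb Q$ yields the asserted convergence for every $z\in\mathbb R$ and every $A=[a,b)$, completing the proof.
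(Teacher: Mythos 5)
Your proposal follows essentially the same three-step route as the paper: Akcoglu--Krengel for fixed $z$ and integer-endpoint intervals, passage to real endpoints by inner/outer approximation, and extension from a countable dense set of $z$ to all of $\mathbb R$ via a uniform modulus of continuity obtained by splitting $J_j^L$ into its Lipschitz region $(-\infty,z_L]$ (via Lemma~\ref{lem:lipschitz}) and its H\"older region $[z_L,\infty)$ (via (H1)) and averaging the random coefficients with the ergodic theorem -- this is exactly the content of Proposition~\ref{prop:ML}(ii) and Step~3 of the paper's argument. The only noticeable difference is that you explicitly acknowledge the process is only \emph{almost} subadditive with a bounded $O(K^2)$ boundary error from crossing interactions when $K\geq 2$, which is a slightly more careful bookkeeping of the same step.
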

Further, some usefull properties of $J_{\rm hom}^L$ are established in
\begin{proposition}\label{Prop:jhomlcontinuous}
Assume that Assumption \ref{Ass:stochasticLJ} is satisfied. The map $z\mapsto J_{\rm hom}^L(z)$ is continuous and convex. Moreover, there exists $\Omega_0\subset\Omega$ with $\mathbb{P}(\Omega_0)=1$ such that the following is true: For all $\omega\in\Omega_0$, and every $A=[a,b)$, $a,b\in\mathbb R$ it holds
\begin{align*}
		\Gamma\text{-}\lim_{N\to\infty} J_{\mathrm{hom}}^{L,(N)}(\omega,\cdot,A)= J_{\mathrm{hom}}^L.
		\end{align*}
\end{proposition}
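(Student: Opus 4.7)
The statement contains three assertions: convexity and continuity of $z\mapsto J_{\rm hom}^L(z)$, and the $\Gamma$-convergence of $J_{\rm hom}^{L,(N)}(\omega,\cdot,A)$ to $J_{\rm hom}^L$ along a fixed full-measure set $\Omega_0$. The plan is to combine the pointwise convergence granted by Proposition~\ref{Prop:existencejhom} (which holds for every $z\in\mathbb R$ and every $\omega\in\Omega_0$ simultaneously) with a uniform-in-$N$ modulus-of-continuity estimate in $z$ for the prelimit functionals. This modulus will give the H\"older continuity of $J_{\rm hom}^L$ together with the equicontinuity needed to upgrade pointwise to locally uniform convergence, which in turn yields the $\Gamma$-convergence. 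Convexity will be obtained separately via a midpoint-convexity argument by concatenation.

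For the modulus of continuity, the key observation is that the admissible class $\mathcal A_{N,K}^0(A)$ in \eqref{def:ANK0} is independent of $z$. Hence for $z,z'\in\mathbb R$, testing an $\varepsilon$-optimal $\phi$ for $J_{\rm hom}^{L,(N)}(\omega,z,A)$ in the $z'$-problem (and vice versa) gives, after triangle inequality,
\begin{equation*}
\bigl|J_{\rm hom}^{L,(N)}(\omega,z,A)-J_{\rm hom}^{L,(N)}(\omega,z',A)\bigr|\le\frac{1}{|NA\cap\mathbb Z|}\sum_{j=1}^{K}\sum_{i}\bigl|J_j^L(\omega,i,\zeta_i)-J_j^L(\omega,i,\zeta_i')\bigr|+\varepsilon,
\end{equation*}
with $\zeta_i-\zeta_i'=z-z'$. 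The approximation $J_j^L$ is uniformly Lipschitz on $(-\infty,\delta_j(\omega)]$: the affine-tail slope $m_j^{z_L}(\omega)$ is bounded by Lemma~\ref{lem:lipschitz} applied on $(\tfrac{z_L}{2},\delta_j(\omega))$, and Lemma~\ref{lem:lipschitz} itself controls $[z_L,\delta_j(\omega)]$; on $[\delta_j(\omega),\infty)$ it is $\alpha$-H\"older with coefficient $C_j^H(\omega)$. Splitting each summand according to whether $\zeta_i,\zeta_i'$ lie below or above $\delta_j(\omega)$, and invoking the ergodic average $|NA\cap\mathbb Z|^{-1}\sum_i C_j^H(\tau_i\omega)\to\mathbb E[C_j^H]<\infty$ from Proposition~\ref{Prop:averages} (made available by (H1)), we obtain
\begin{equation*}
\bigl|J_{\rm hom}^{L,(N)}(\omega,z,A)-J_{\rm hom}^{L,(N)}(\omega,z',A)\bigr|\le C(\omega,N)\bigl(|z-z'|+|z-z'|^{\alpha}\bigr),
\end{equation*}
with $C(\omega,N)$ bounded uniformly in $N$ for $\omega$ in a full-measure subset of $\Omega_0$. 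Sending $N\to\infty$ via Proposition~\ref{Prop:existencejhom} transfers this estimate to $J_{\rm hom}^L$, establishing H\"older continuity.

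The $\Gamma$-limsup at $z$ is realized by the constant recovery sequence $z_N=z$ through Proposition~\ref{Prop:existencejhom}. For the $\Gamma$-liminf, given $z_N\to z$, the modulus estimate yields $J_{\rm hom}^{L,(N)}(\omega,z_N,A)\ge J_{\rm hom}^{L,(N)}(\omega,z,A)-C(\omega,N)(|z_N-z|+|z_N-z|^{\alpha})$, and sending $N\to\infty$ gives $\liminf_N J_{\rm hom}^{L,(N)}(\omega,z_N,A)\ge J_{\rm hom}^L(z)$. Finally, convexity is proved via midpoint convexity: for $z_1,z_2\in\mathbb R$ and $\bar z:=(z_1+z_2)/2$, near-optimal configurations $\phi^{(1)},\phi^{(2)}$ for the $N$-cell problems at $z_1$ and $z_2$ are glued into an admissible competitor for the $\bar z$-problem on a cell of size $2N+2(K-1)$, with affine boundary layers of width $K-1$ inserted at both endpoints to enforce $\phi=0$ there. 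The boundary and interface modifications involve only $O(K^2)$ bounded increments (convex combinations of $\bar z,z_1,z_2$), contributing $O(K^2)$ energy by \eqref{LJschrankecutoff} and Lemma~\ref{lem:lipschitz}; normalizing and passing to the limit via Proposition~\ref{Prop:existencejhom} gives $J_{\rm hom}^L(\bar z)\le\tfrac12(J_{\rm hom}^L(z_1)+J_{\rm hom}^L(z_2))$, and combined with the continuity established above this yields full convexity. The main technical obstacle is the uniform modulus estimate: it rests crucially on the approximation $J_j^L$ (the raw $J_j$ admits no uniform Lipschitz/H\"older bound near its singularity) together with (H1) and the ergodic theorem for the H\"older coefficients.
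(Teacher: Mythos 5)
Your proposal follows essentially the same route as the paper's proof: the uniform H\"older/Lipschitz modulus for $J_{\rm hom}^{L,(N)}(\omega,\cdot,A)$ (this is precisely estimate \eqref{lipschitzhoelderestimate} from Proposition~\ref{prop:ML}(ii), built from Lemma~\ref{lem:lipschitz}, (H1) and the ergodic average of H\"older coefficients in Proposition~\ref{Prop:averages}) gives continuity of $J_{\rm hom}^L$ and the $\Gamma$-liminf, the constant sequence gives the $\Gamma$-limsup, and convexity is obtained by gluing near-optimal configurations. The only structural variation is cosmetic: the paper proves convexity directly for arbitrary $t\in[0,1]$ by splitting the reference interval into $[0,t)$ and $[t,1)$, whereas you use midpoint convexity on a concatenated cell and then upgrade with continuity.

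One step in your convexity argument is, however, stated too loosely. You assert that the boundary/interface modifications involve only ``$O(K^2)$ bounded increments (convex combinations of $\bar z,z_1,z_2$), contributing $O(K^2)$ energy by \eqref{LJschrankecutoff} and Lemma~\ref{lem:lipschitz}.'' This fails at the \emph{transition} indices $i$ lying in a zero-boundary layer with $i+j$ in the interior of one of the glued minimizers: there the argument of $J_j^L$ is of the form $\bar z+\tilde\phi^{\,i+j}/j$, which involves the minimizer value $\phi^{(\cdot),i+j}$, and that value is not a priori bounded (only $\phi^{(\cdot),0},\dots,\phi^{(\cdot),K-1}$ are pinned to zero). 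Thus neither \eqref{LJschrankecutoff} nor Lemma~\ref{lem:lipschitz} bounds those summands directly. What \emph{is} bounded is the difference between the $\tilde\phi$-argument and the corresponding $\phi^{(\cdot)}$-argument (a bounded combination of $\bar z,z_1,z_2$), and one must therefore compare energies through the H\"older/Lipschitz estimate \eqref{lipschitzhoelderestimate}, paying a price proportional to the random (not uniformly bounded) H\"older coefficients $C^H_j(\tau_i\omega)$ at those $O(K)$ specific indices. These must then be dispatched using the ergodic average from Proposition~\ref{Prop:averages}; the paper does this via the $\epsilon$-window argument in Step~2 of its convexity proof, pushing the boundary error to $O(\epsilon)$ and letting $\epsilon\to 0$. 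Apart from this omission your structure is correct and matches the paper's.
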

Finally, the following proposition justifies the approximation of the potentials and is the key to the proof of Proposition~\ref{Prop:Jhom}.
\begin{proposition}\label{Prop:Jhomzklein}
Assume that Assumption \ref{Ass:stochasticLJ} is satisfied. There exists $\Omega_0\subset\Omega$ with $\mathbb{P}(\Omega_0)=1$ such that the following is true: For all $\omega\in\Omega_0$, $z\in\mathbb{R}$ and $A:=[a,b)$ with $a,b\in\mathbb R$ the limit $\lim\limits_{N\rightarrow\infty}J_{\mathrm{hom}}^{(N)}(\omega,z,A)$ exists in $\overline{\mathbb{R}}$ and is independent of $\omega$ and $A$. Moreover,
$$\lim\limits_{N\rightarrow\infty}J_{\mathrm{hom}}^{(N)}(\omega,z,A)=\lim\limits_{L\rightarrow\infty}J_{\mathrm{hom}}^L(z).$$
\end{proposition}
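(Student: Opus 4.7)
My plan is to obtain $\bar J(z):=\lim_{L\to\infty}J_{\rm hom}^L(z)$ via monotonicity, derive the easy liminf-bound from the pointwise inequality $J_j^L\le J_j$, and then work for the matching limsup by modifying near-minimizers of the approximate problem. For the first step I observe that, since $(z_L)$ decreases to $0$ and the subdifferential tangent line at $z_L$ lies below the convex graph of $J_j$ on $(0,\delta_j)$, the approximations $J_j^L$ are pointwise non-decreasing in $L$ with $J_j^L\nearrow J_j$ everywhere. Thus $L\mapsto J_{\rm hom}^{L,(N)}(\omega,z,A)$ is non-decreasing, which ensures that $\bar J(z)$ exists in $\overline{\mathbb R}$, is deterministic, and is independent of $A$. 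For the second step I apply Proposition~\ref{Prop:existencejhom} for each $L\in\mathbb N$, each rational $z$, and each interval $A$ with rational endpoints, and take a countable intersection to obtain one full-measure set $\Omega_0$ on which all these convergences $J_{\rm hom}^{L,(N)}(\omega,z,A)\to J_{\rm hom}^L(z)$ hold simultaneously; continuity of $z\mapsto J_{\rm hom}^L$ (Proposition~\ref{Prop:jhomlcontinuous}) together with a short approximation in $A$ extends the convergence to arbitrary $z\in\mathbb R$ and general $A=[a,b)$.

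The liminf direction is immediate: $J_j^L\le J_j$ gives $J_{\rm hom}^{L,(N)}(\omega,z,A)\le J_{\rm hom}^{(N)}(\omega,z,A)$ for every $L,N$, so for $\omega\in\Omega_0$ one obtains $J_{\rm hom}^L(z)\le \liminf_{N\to\infty}J_{\rm hom}^{(N)}(\omega,z,A)$ and, upon sending $L\to\infty$, $\bar J(z)\le \liminf_{N\to\infty}J_{\rm hom}^{(N)}(\omega,z,A)$.

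For the limsup direction I fix $\epsilon>0$, choose $L$ with $J_{\rm hom}^L(z)\ge \bar J(z)-\epsilon$, and, for $N$ large, pick an $\epsilon$-near-minimizer $\phi^{L,N}\in\mathcal A_{N,K}^0(A)$ of $J_{\rm hom}^{L,(N)}(\omega,z,A)$. Let $B$ denote the set of \emph{bad} indices $i$ for which $z+(\phi^{L,N,i+j}-\phi^{L,N,i})/j<z_L$ for some $j\in\{1,\ldots,K\}$. Using that $J_j^L\ge J_j(\omega,i,z_L)\ge \tfrac1d\Psi(z_L)-d$ on $(-\infty,z_L)$ by convexity and (LJ2), while $J_j^L\ge -d$ elsewhere by (LJ2), the near-minimality bound on the total energy yields
\[
\frac{|B|}{M_N}\;\le\; \frac{d\bigl(J_{\rm hom}^L(z)+2\epsilon+Kd\bigr)}{\Psi(z_L)}\;\longrightarrow\; 0 \qquad (L\to\infty,\text{ uniformly in }N),
\]
with $M_N=|NA\cap\mathbb Z|$. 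I would then construct an admissible $\tilde\phi^{L,N}\in\mathcal A_{N,K}^0(A)$ for $J_{\rm hom}^{(N)}$ by locally excising each bad site: around each $i\in B$ I replace the small spacings by a single ``crack'' of large size (whose $J_j$-contributions tend to $0$ by (LJ3)) together with spacings at the pointwise minimizers $\delta_j(\tau_{\cdot}\omega)$, and I compensate the resulting shift in cumulative displacement by a uniform perturbation, of size $O(|B|/M_N)$, of the good spacings. For $L$ large enough this perturbation stays inside $[z_L,\infty)$; the energy cost at each good index is bounded by $C_j^H(\tau_i\omega)(|B|/M_N)^\alpha$ via the Hölder estimate of Lemma~\ref{lem:lipschitz}, which, summed using the ergodic theorem (Proposition~\ref{Prop:averages}, relying on (H1)), contributes an error $\eta(L)\to 0$. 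Collecting these bounds,
\[
\limsup_{N\to\infty}J_{\rm hom}^{(N)}(\omega,z,A)\;\le\; J_{\rm hom}^L(z)+2\epsilon+\eta(L),
\]
and sending $L\to\infty$ and then $\epsilon\to 0$ closes the inequality.

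I expect the modification to be the chief technical hurdle. The delicate points are: (i) ensuring that the redistribution does not itself create new bad indices, which requires $|B|/M_N$ uniformly small and is exactly where the growth $\Psi(z_L)\to\infty$ enters; (ii) preserving the boundary constraint $\phi^s=\phi^{N-s}=0$ for $s=0,\ldots,K-1$, which forces the cracks and the compensating uniform shift to balance exactly; and (iii) handling the $K$-neighbour structure, since a crack at one site simultaneously affects the $j$-th order averages for all $1\le j\le K$, so the local modification must span a block of width at least $K$ around each bad index. Assumption (H1) combined with Lemma~\ref{lem:lipschitz} furnishes the precise Hölder control of the perturbation error that makes this quantitative.
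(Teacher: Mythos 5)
Your overall architecture tracks the paper closely: the pointwise inequality $J_j^L\le J_j$ gives the easy direction (the paper's \eqref{Leins}), and the hard direction is proved by modifying a near-minimizer of the $L$-approximated problem into an admissible competitor for the unapproximated one (the content of the paper's Lemma~\ref{Lem:limsup}). Your observation that $L\mapsto J_j^L$ is pointwise non-decreasing (since the support line at $z_{L'}<z_L$ lies above the one at $z_L$ for $z<z_{L'}$) is correct and is a cleaner way to see that $\lim_L J_{\rm hom}^L$ exists; the paper obtains the same conclusion indirectly by sandwiching. Your estimate on $|B|/M_N$ from the energy bound and $\Psi(z_L)\to\infty$ is also sound.

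Where you genuinely diverge from the paper — and where the gap is — is in the modification scheme. The paper does \emph{not} introduce cracks: it sets the bad spacings exactly to $z_L$ and compensates by reducing the good spacings $\tilde z^i$ that satisfy $\tilde z^i>z$, by amounts $v_N^i\le \max\{\tilde z^i-z,0\}$ summing to $\sum_{i\in I_{L,N}}(z_L-\tilde z^i)$. This design ensures the compensated spacings never drop below $z$, hence never below $z_L$, so no new bad indices appear. Your crack construction breaks this: after inserting a crack of size $C$ at each bad site, the total displacement changes by $\approx |B|\,C$, and a \emph{uniform} compensation spreads a reduction $\approx |B|C/M_N$ over \emph{all} good sites. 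But good spacings need only be $\ge z_L$, and $z_L\to 0$; moreover $|B|/M_N \lesssim 1/\Psi(z_L)$ while $\Psi$ is only required to blow up (e.g.\ $\Psi(z)\sim\log(1/z)$ is allowed), so one cannot conclude $|B|C/M_N\ll z_L$. Your claim that ``for $L$ large enough this perturbation stays inside $[z_L,\infty)$'' is therefore unjustified and can fail: the compensation may push spacings that are close to $z_L$ below it, or even below $0$. This is exactly the delicate point you flag under (i), but your proposal does not resolve it — the paper resolves it by choosing \emph{where} to take the compensation rather than spreading it uniformly.

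A second, smaller divergence: the paper does not need $|B|/N\to 0$ directly; it works with the weighted quantity $\frac1N\sum_{I_{L,N}}(z_L-\tilde z^i)\to 0$ and then decomposes the good indices into ``small-shift'' and ``big-shift'' sets, controlling the former by the H\"older/Lipschitz bound of Proposition~\ref{prop:ML}(ii) and the latter by showing their density vanishes. Your plan to control the good-site perturbation error by an averaged H\"older estimate and Proposition~\ref{Prop:averages} is in the same spirit, but since your compensation size is $\sim |B|C/M_N$, which is not tied to the weighted sum, the mechanism does not carry over directly. To repair your argument you would either need to mimic the paper's compensation (take displacement only from sites with $\tilde z^i>z$) or replace the uniform compensation by one that respects the lower bound $\hat z^i\ge z_L$ — which is the paper's key structural insight, not a routine detail.
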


\section{Proofs}

\subsection{Proof of Theorem~\ref{Thm:nullteordnung}} \label{sec:proof-thm}

We are now in a position to prove our main result, the $\Gamma$-convergence result in Theorem~\ref{Thm:nullteordnung}. We first show the compactness result, secondly the liminf-inequality and thirdly the limsup-inequality. While the compactness proof is straight forward, the liminf-inequality is rather technical due to the introduction of two coarser additional scales that allow to deal with the randomness of the system. The limsup-inequality is first shown for affine deformations, then for piecewise affine and finally for $W^{1,1}$-functions.

\begin{proof}[Proof of Theorem \ref{Thm:nullteordnung}]
The existence of $J_{\mathrm{hom}}$ and some properties of this function is shown in Proposition \ref{Prop:Jhom}.

\step 1 'Compactness'. 

Let $(u_n)\subset L_1$ be a sequence with $\sup_n E_n^\ell(\omega,u_n)<\infty$. By the superlinear growth at $-\infty$, the common lower bound from (LJ2) and the boundary conditions $u_n(0)=0$, $u_n(1)=\ell$ for every $n\in\mathbb{N}$, we obtain $\sup_n\lVert u_n\rVert_{W^{1,1}(0,1)}<C<\infty$. Since $\lVert u_n\rVert_{W^{1,1}(0,1)}$ is equibounded, we can extract a subsequence (not relabelled) $(u_n)$ which weakly$^*$ converges in $BV(0,1)$ to $u\in BV(0,1)$ \cite[Thm.~3.23]{AmbrosioFuscoPallara}. By definition, we also have $u\in BV^{\ell}(0,1)$.

Because we need it in the following as a technical result, we also consider a given partition $I_k=[c,d]$, $k=0,1,...m$ and $c,d\in[0,1]$, assuming $(\mathbb{Z}\cap n I_j)\cap(\mathbb{Z}\cap n I_{k})=\emptyset$ for $j\neq k$ and for all $n\in\mathbb{N}$. With the same argumentation as above, we get 
\begin{align}
\label{compactheitmitrandwerten}
 \lVert(u_n')\rVert_{L^{1}(I_k^-)}\leq 2C|I_k|+u_n(b)-u_n(a),
\end{align}
with $a:=\min\{x:x\in I_k\cap \frac{1}{n}\mathbb{Z}\}$ and $b:=\max\{x:x\in I_k\cap \frac{1}{n}\mathbb{Z}\}$ and $I_k^-:=\lambda_n\big[\min\{i:i\in nI_k\cap \mathbb{Z}\},\max\{i:i\in nI_k\cap \mathbb{Z}\}\big)$.\\

\step 2 'Liminf inequality'.

	Let $(u_n)\subset L^1(0,1)$ be a sequence with $u_n\rightarrow u$ in $L^1(0,1)$ and  with $\sup_nE_n^{\ell}(\omega,u_n)<\infty$. From the compactness result, we know that $u_n\rightharpoonup^* u$ in $BV(0,1)$ and $u$ fulfils the boundary conditions. We regard $u$ as a good representative (cf.~\cite[Thm.~3.28]{AmbrosioFuscoPallara}). The aim is to show
	\begin{align*}
	\liminf_{n\rightarrow\infty}E_n^{\ell}(\omega,u_n)\geq\int_{0}^{1}J_{\mathrm{hom}}(u'(x))\,\mathrm{d}x.
	\end{align*} 	
	We pass to $\hat{u}_n$ instead of $u_n$, which is the sequence $(\hat{u}_n)$ of piecewise constant functions defined by $\hat{u}_n(i/n)=u_n(i/n)$. By Theorem \ref{Thm:interpolationConstant}, $\hat{u}_n$  also weakly$^*$ converges to $u$ in $BV(0,1)$. Further, it has the same discrete difference quotients as $u_n$. Now, we pass to a subsequence $\hat{u}_{n}$ (not relabelled) with $\liminf\limits_{n\rightarrow\infty}E_n^{\ell}(\omega,\hat{u}_n)=\lim\limits_{n\rightarrow\infty}E_{n}^{\ell}(\omega,\hat{u}_{n})$ and then to a further subsequence $(\hat{u}_{n})$ (not relabelled) such that $\hat{u}_{n}\rightarrow u$ pointwise almost everywhere, which is possible because of the convergence $\hat{u}_{n}\rightarrow u$ in $L^1(0,1)$.\\
	
	\textit{Introduction of the first artificial scale.}
	
	\begin{figure}[t]
		\centering
				\includegraphics[width=0.5\linewidth]{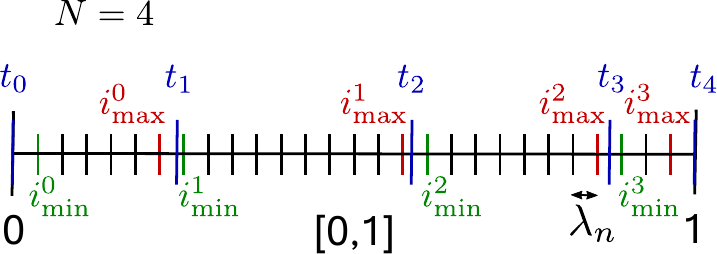}
				\caption{Illustration of the definitions $i_{min}^j$ and $i_{max}^j$ for $N=4$.}
		\label{fig:iminmax}
	\end{figure}
		
While in the periodic problem the periodicity length functions as a coarser scale, we here have to introduce an artificial coarser scale. We define the coarser grid as follows: For a fixed $\delta>0$, small enough, there always exists $M\in\mathbb{N}$ and $t_0,...,t_M\in[0,1]$ such that $t_0=0$, $t_M=1$, $\delta<t_{m+1}-t_m<2\delta$, $t_m$ is not in the jump set of $u$ and $\hat{u}_{n}(t_m)\rightarrow \hat{u}(t_m)$ pointwise for $n\rightarrow\infty$ and for every $m=0,...,M$. With the definition (illustrated in Figure \ref{fig:iminmax})
\begin{align*}
i_{min}^m&:=\min\left\{i:i\in n[t_m,t_{m+1}) \right\},\quad i_{max}^m:=\max\left\{i:i\in n[t_m,t_{m+1}) \right\},
\end{align*}
and the lower bound $J_j(\omega,\cdot)\geq -d$ for every $j\in\{1,\dots,K\}$, see (L2), we estimate 
\begin{align}\label{liminf1}
\begin{split}
E_n^{\ell}(\omega,u_n)=&\sum_{j=1}^{K}\sum_{i=0}^{n-j}\lambda_nJ_j\left(\omega,i,\dfrac{\hat{u}_n^{i+j}-\hat{u}_n^{i}}{j\lambda_n}\right)\\
\geq&\sum_{j=1}^{K}\sum_{m=0}^{M-1}\lambda_n\sum_{i=i_{min}^m}^{i_{max}^m+1-j}J_j\left(\omega,i,\dfrac{\hat{u}_n^{i+j}-\hat{u}_n^{i}}{j\lambda_n}\right)-\lambda_ndK^2M.
\end{split}
\end{align}
	
\textit{Introduction of the second artificial scale.}

We want to continue with the first term of the right hand side of \eqref{liminf1}. 
The two remainders, which will show up in \eqref{liminfjhom} are the reason for introducing the second artificial scale $\epsilon$. In the case of next-to-nearest neighbours, these remainders do not appear at all. Therefore, the second scale is not necessary in this case. It is just useful in the case $K\geq2$.\\
For this, we introduce an additional length scale $\epsilon>0$ that is much smaller than $\delta$. Because of the pointwise convergence almost everywhere of $\hat{u}_n$, we can find for every $m=0,...,M$ values $a_m\in\mathbb{R}$ and $b_m\in\mathbb{R}$ (explicitly, $a_m$ and $b_m$ depend also on $M$, $n$ and $\epsilon$, but we do not denote this for better readability) which are not in the jump set of $u$, with $t_m<a_m<b_m<t_{m+1}$, $\epsilon<a_m-t_m<2\epsilon$, $\epsilon<t_{m+1}-b_m<2\epsilon$ and $\hat{u}_n(a_m)\to u(a_m)$ and $\hat{u}_n(b_m)\to u(b_m)$ pointwise for $n\to\infty$. With that, we define $h_n^{a_m}\in\mathbb{N}$ and $h_n^{b_m}\in\mathbb{N}$, with $0\leq h_n^{a_m}\leq h_n^{b_m}\leq n$ such that $a_m\in\lambda_n[h_n^{a_m},h_n^{a_m}+1)$ and $b_m\in\lambda_n[h_n^{b_m},h_n^{b_m}+1)$. Note that for $n$ large enough it always holds true that $i_{min}^m+K<<h_n^{a_m}$ and $h_n^{b_m}<<i_{max}^m-K$. 
	
		\begin{figure}[t]
		\centering
				\includegraphics[width=0.5\linewidth]{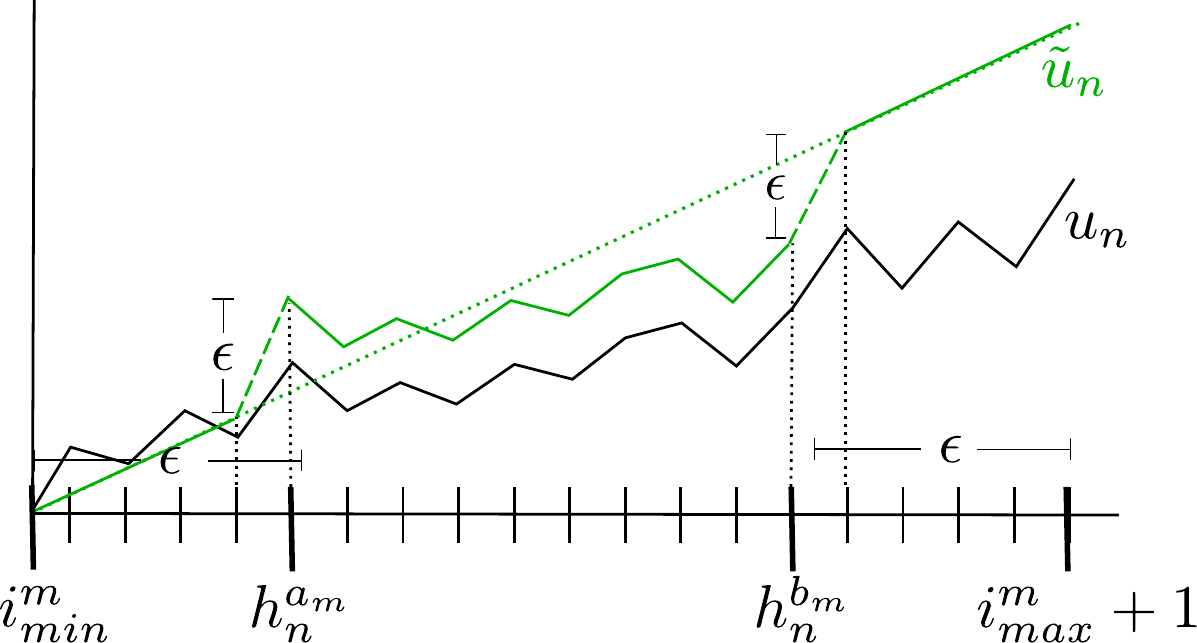}
				\caption{Illustration of the definition of $\tilde{u}_n$.}
		\label{fig:liminf}
		\end{figure}

	We further need a modified version $\tilde{u}_n$ of the function $\hat{u}_n$, because $\hat{u}_n$ does not fulfil the boundary constraint of the infimum problem of $J_{hom}^{L,(n)}$. Therefore we change it a little bit, such that $\tilde{u}_n$ becomes a competitor for the infimum problem. Remind, that it holds true that the discrete difference quotients of $u_n$ and $\hat{u}_n$ are the same, by construction, and can therefore be used equivalently. Now, define as an abbreviation
	\begin{align*}
	z_{n,m}^{\epsilon}:=\dfrac{u_n^{h_n^{b_m}}-u_n^{h_n^{a_m}}+2\epsilon}{\lambda_n(h_n^{b_m}-h_n^{a_m}+2)},
	\end{align*}
	which will be the average slope of $\tilde{u}_n$ on the interval $\lambda_n[i_{min}^m,i_{max}^m+1]$. Since $\hat{u}$ is piecewise constant and by the definition of $a_m$ and $b_m$, we get for $n\rightarrow\infty$
		\begin{align}
			\label{differenzen}
			z_{n,m}^{\epsilon}=\dfrac{u_n^{h_n^{b_m}}-u_n^{h_n^{a_m}}+2\epsilon}{\lambda_n(h_n^{b_m}-h_n^{a_m}+2)}=\dfrac{\hat{u}_n(b_m)-\hat{u}_n(a_m)+2\epsilon}{\lambda_n(h_n^{b_m}-h_n^{a_m}+2)}\rightarrow\dfrac{u(b_m)-u(a_m)+2\epsilon}{b_m-a_m}.
		\end{align}
	With this, we define $\tilde{u}_n$ as the continuous and piecewise affine function with $\tilde{u}_n^0=0$ and
	\begin{align*}
	\dfrac{\tilde{u}_n^{i+1}-\tilde{u}_n^{i}}{\lambda_n}&=z_{n,m}^\epsilon\ &\text{for}&\ i_{min}^m\leq i\leq h_n^{a_m}-2\ \text{and}\ h_n^{b_m}+1\leq i\leq i_{max}^m,\\
	\tilde{u}_n^i&=\tilde{u}_n^{i-1}+\epsilon\ &\text{for}&\ i= h_n^{a_m}\ \text{and}\ i=h_n^{b_m}+1,\\
	\dfrac{\tilde{u}_n^{i+1}-\tilde{u}_n^{i}}{\lambda_n}&=\dfrac{u_n^{i+1}-u_n^i}{\lambda_n} \ &\text{for}&\ h_n^{a_m}\leq i\leq h_n^{b_m}-1.
	\end{align*}
	A sketch of this construction can be found in Figure \ref{fig:liminf}. Note that the boundary constraints of the infimum problem of $J_{hom}^{L,(n)}$ are fulfilled, by definition. Further note that the slopes of $u_n$ and $\tilde{u}_n$ are the same on the interval $h_n^{a_m}\leq i\leq h_n^{b_m}-1$. The two jumps which we included in the definition of $\tilde{u}_n$ are of technical reasons. They are designed in such a way that the remainders, which show up in the following, can easily be estimated. This can be seen in \eqref{grundsprung}, where the presence of the jump ensures that the discrete gradients can be bounded from below by a positive value converging to $+\infty$.

	With all these definitions, and by definition of $J_{\mathrm{hom}}^{(n)}$, we can estimate the first term of the right hand side of \eqref{liminf1}:
	\begin{equation}
		\label{liminfjhom}
		\begin{split}
		&\sum_{j=1}^{K}\sum_{m=0}^{M-1}\lambda_n\sum_{i=i_{min}^m}^{i_{max}^m+1-j}J_j\left(\omega,i,\dfrac{\hat{u}_n^{i+j}-\hat{u}_n^{i}}{j\lambda_n}\right)\\
		\geq& \sum_{m=0}^{M-1}\lambda_n\left\lvert i_{max}^m-i_{min}^m+1 \right\rvert J_{\mathrm{hom}}^{(n)}\left(\omega,z_{n,m}^{\epsilon},[t_m,t_{m+1}) \right)\\
		&+\sum_{m=0}^{M-1}\sum_{j=1}^{K}\sum_{i=i_{min}^m}^{h_n^{a_m}-1}\lambda_n\left(J_j\left(\omega,i,\dfrac{\hat{u}_n^{i+j}-\hat{u}_n^{i}}{j\lambda_n}\right)-J_j\left(\omega,i,\dfrac{\tilde{u}_n^{i+j}-\tilde{u}_n^{i}}{j\lambda_n}\right)\right)\\ &+\sum_{m=0}^{M-1}\sum_{j=1}^{K}\sum_{i=h_n^{b_n}-j+1}^{i_{max}^m+1-j}\lambda_n\left(J_j\left(\omega,i,\dfrac{\hat{u}_n^{i+j}-\hat{u}_n^{i}}{j\lambda_n}\right)-J_j\left(\omega,i,\dfrac{\tilde{u}_n^{i+j}-\tilde{u}_n^{i}}{j\lambda_n}\right)\right).
		\end{split}
	\end{equation}
	
	\textit{Vanishing remainders.}
	
	Later on, we continue with the first term of \eqref{liminfjhom} in \eqref{liminf2}. Before, we do so, we first consider the second and third term of \eqref{liminfjhom} and show that they vanish in the limit $(\liminf_{\epsilon\to 0}\lim\limits_{n\to\infty})$. Because the calculation and the arguments are the same, we just show them for the term
	\begin{align}
	\label{restterm1}
	\begin{split}
	&\sum_{m=0}^{M-1}\sum_{j=1}^{K}\sum_{i=i_{min}^m}^{h_n^{a_m}-1}\lambda_n\left(J_j\left(\omega,i,\dfrac{\hat{u}_n^{i+j}-\hat{u}_n^{i}}{j\lambda_n}\right)-J_j\left(\omega,i,\dfrac{\tilde{u}_n^{i+j}-\tilde{u}_n^{i}}{j\lambda_n}\right)\right)
	\end{split}
	\end{align}
	and not for the other one. The first part of \eqref{restterm1} can be estimated by \eqref{LJschrankecutoff} as
	\begin{align*}
	&\sum_{m=0}^{M-1}\sum_{j=1}^{K}\sum_{i=i_{min}^m}^{h_n^{a_m}-1}\lambda_nJ_j\left(\omega,i,\dfrac{\hat{u}_n^{i+j}-\hat{u}_n^{i}}{j\lambda_n}\right)\geq\sum_{m=0}^{M-1}\sum_{j=1}^{K}\sum_{i=i_{min}^m}^{h_n^{a_m}-1}\lambda_n(-d)\\\geq&-MK\left(h_n^{a_m}-i_{min}^m \right)\lambda_nd\stackrel{n\to\infty}{\longrightarrow} -MKd(a_m-t_m)\geq -2\epsilon MKd,
	\end{align*}
	which converges to $0$ for $\epsilon\to0$. The second part of \eqref{restterm1} is
	\begin{align*}
	\sum_{m=0}^{M-1}\sum_{j=1}^{K}\sum_{i=i_{min}^m}^{h_n^{a_m}-1}-\lambda_nJ_j\left(\omega,i,\dfrac{\tilde{u}_n^{i+j}-\tilde{u}_n^{i}}{j\lambda_n}\right).
	\end{align*}
	By construction of $\tilde{u}_n$ and since we consider $i_{min}^m\leq i\leq h_n^{a_m}-1$, it holds true that 
	\begin{align*}
		\dfrac{\tilde{u}_n^{i+j}-\tilde{u}_n^{i}}{j\lambda_n}&=\left(\dfrac{\tilde{u}_n^{i+j}-\tilde{u}_n^{i+j-1}}{j\lambda_n}+...+\dfrac{\tilde{u}_n^{i+1}-\tilde{u}_n^{i}}{j\lambda_n} \right)=\dfrac{1}{j}\left(xz_{n,m}^{\epsilon}+\sum_{k=p}^{q}\dfrac{u_n^{k+1}-u_n^{k}}{\lambda_n}+y\dfrac{\epsilon}{\lambda_n}\right),
	\end{align*}
	where $x\in\{0,...,j\}$, $p\geq h_n^{a_m}$, $q\leq h_n^{a_m}-1+j$, $q-p+1\leq j$, $y\in\{0,1\}$ and from $y=0$ follows $q<p$. Further, we know from \eqref{differenzen} that $z_{n,m}^{\epsilon}$ converges and is therefore bounded by a constant $\hat{C}$. Due to $\sup_n E_n^\ell(\omega,u_n)<\infty$, we have $\dfrac{u_n^{k+1}-u_n^k}{\lambda_n}\geq 0$  for every $k=0,...,N-1$. Putting all these information together yields that it holds one of the following two cases, namely either Case 1
	\begin{align*}
	\dfrac{\tilde{u}_n^{i+j}-\tilde{u}_n^{i}}{j\lambda_n}&=\frac{1}{j}\left(jz_{n,m}^\epsilon\right)=z_{n,m}^\epsilon
	\end{align*}
	or Case 2
	\begin{align}
	\label{grundsprung}
		\dfrac{\tilde{u}_n^{i+j}-\tilde{u}_n^{i}}{j\lambda_n}&\geq\frac{1}{j}\left(-j\hat{C}+\dfrac{\epsilon}{\lambda_n}\right)=\dfrac{-\lambda_n\hat{C}+\epsilon/j}{\lambda_n}\geq\dfrac{C}{\lambda_n},
	\end{align}
	for $n$ large enough. In Case 1, we get
	\begin{align*}
		&\sum_{m=0}^{M-1}\sum_{j=1}^{K}\sum_{i=i_{min}^m}^{h_n^{a_m}-1}-\lambda_nJ_j\left(\omega,i,\dfrac{\tilde{u}_n^{i+j}-\tilde{u}_n^{i}}{j\lambda_n}\right)\geq\sum_{m=0}^{M-1}\sum_{j=1}^{K}\sum_{i=i_{min}^m}^{h_n^{a_m}-1}-\lambda_nd\max\left\{\Psi\left(z_{n,m}^{\epsilon}\right),\left|z_{n,m}^{\epsilon} \right| \right\}\\ \geq&\sum_{m=0}^{M-1}\sum_{j=1}^{K}\sum_{i=i_{min}^m}^{h_n^{a_m}-1}-\lambda_nd\max\left\{\max_{|z|\leq\hat{C}}\Psi\left(z\right),\max_{|z|\leq\hat{C}}\left|z \right| \right\}\\
		=&\sum_{m=0}^{M-1}\sum_{j=1}^{K}\sum_{i=i_{min}^m}^{h_n^{a_m}-1}-\lambda_nC\geq-MKC\lambda_n\left(h_n^{a_m}-i_{min}^m \right)\stackrel{n\to\infty}{\longrightarrow}-MKC(a_m-t_m)\geq -2\epsilon MKC,
	\end{align*}
	with $0<C<\infty$. In the limit $\epsilon\to0$, this converges to zero, as desired. In this calculation, we assumed that $z_{n,m}^{\epsilon}$ lies within the domain of $\Psi$. This is indeed true because $z_{n,m}^{\epsilon}$ is a linear combination of the discrete gradients of $u_n$, which lie in the domain of $J_j$ because of $\sup_nE_n^{\ell}(\omega,u_n)<\infty$.
	
In Case 2, we have $\dfrac{\tilde{u}_n^{i+j}-\tilde{u}_n^{i}}{j\lambda_n}\geq d$ for $n$ large enough, with $d$ from (LJ2). This yields for $n$ large enough with (LJ2)
\begin{align*}
&\sum_{m=0}^{M-1}\sum_{j=1}^{K}\sum_{i=i_{min}^m}^{h_n^{a_m}-1}-\lambda_nJ_j\left(\omega,i,\dfrac{\tilde{u}_n^{i+j}-\tilde{u}_n^{i}}{j\lambda_n}\right)\geq\sum_{m=0}^{M-1}\sum_{j=1}^{K}\sum_{i=i_{min}^m}^{h_n^{a_m}-1}-\lambda_nb\\\geq& -MKb\lambda_n(h_n^{a_m}-i_{min}^m)\stackrel{n\to\infty}{\longrightarrow}-MKb(a_m-t_m)\geq -2MKb\epsilon.
\end{align*}
In the limit $\epsilon\to0$, this also converges to zero, as desired.\\

	\textit{Conclusion and removal of the two artificial scales.}
	
	By passing to the limit $\liminf_{n\rightarrow\infty}$ in \eqref{liminf1} and with Proposition \ref{Prop:Jhom}, as well as estimates \eqref{differenzen}, \eqref{liminfjhom} and \eqref{restterm1}, we obtain
	\begin{align}
	\label{liminf2}
	\begin{split}
	\liminf_{n\rightarrow\infty} E_n^{\ell}(\omega,u_n)&\geq \liminf_{n\to\infty}\sum_{m=0}^{M-1}\lambda_n\left\lvert i_{max}^m-i_{min}^m+1 \right\rvert J_{\mathrm{hom}}^{(n)}\left(\omega,z_{n,m}^{\epsilon},[t_m,t_{m+1}) \right)\\
	&\geq\sum_{m=0}^{M-1}\left\lvert t_{m+1}-t_{m}\right\rvert J_{\mathrm{hom}}\left(\dfrac{u\left(b_{m}\right)-u\left(a_m\right)+2\epsilon}{b_{m}-a_{m}}\right).
	\end{split}
	\end{align}
	For $\liminf_{\epsilon\to0}$, we then get
	\begin{align*}
		\liminf_{\epsilon\to0}\dfrac{u\left(b_{m}\right)-u\left(a_m\right)+2\epsilon}{b_{m}-a_{m}}=\dfrac{u(t_{m+1})-u(t_m)}{t_{m+1}-t_m},
	\end{align*}
	because there is no jump in $a_m$, $b_m$ and $t_m$ and therefore $u$ is absolutely continuous. Therefore we can continue with \eqref{liminf2} by	
	\begin{align}
	\label{liminf2.2}
	\begin{split}
	\liminf_{n\rightarrow\infty} E_n^{\ell}(\omega,u_n)&\geq\liminf_{\epsilon\to0}\sum_{m=0}^{M-1}\left\lvert t_{m+1}-t_{m}\right\rvert J_{\mathrm{hom}}\left(\dfrac{u\left(b_{m}\right)-u\left(a_m\right)+2\epsilon}{b_{m}-a_{m}}\right)\\
	&\geq \sum_{m=0}^{M-1}\left\lvert t_{m+1}-t_{m}\right\rvert J_{\mathrm{hom}}\left(\dfrac{u\left(t_{m+1}\right)-u\left(t_m\right)}{t_{m+1}-t_{m}}\right),
	\end{split}
	\end{align}
	because $J_{\mathrm{hom}}$ is lower semicontinuous due to Proposition~\ref{Prop:Jhom}. We now want to define $(w_M)$ as the piecewise affine interpolation of $u$ with grid points $t_m$ as in Theorem~\ref{thm:interpolationaffine}. We continue with estimating \eqref{liminf2.2} as follows:
\begin{align}\label{liminf3}
\liminf_{n\rightarrow\infty}E_n^{\ell}(\omega,u_n)&\geq\sum_{m=0}^{M}\left\lvert t_{m+1}-t_{m}\right\rvert J_{\mathrm{hom}}\left(\dfrac{w_M\left(t_{m+1}\right)-w_M\left(t_m \right)}{t_{m+1}-t_{m}}\right)=\int_{0}^{1}J_{\mathrm{hom}}\left(w_M'(x) \right)\,\mathrm{d}x.
\end{align}
Note that $J_{\mathrm{hom}}$ fulfils all assumptions of Theorem~\ref{PropGelli} (see Propositions \ref{Prop:Jhom}) and  $w_M\rightharpoonup^*u$ in $BV(0,1)$, according to Theorem \ref{thm:interpolationaffine}. Therefore, we finally get by taking the limit $\liminf_{M\rightarrow\infty}$ (which corresponds to $\delta\rightarrow0$) on both sides in \eqref{liminf3}
\begin{align}
\label{liminf}
\liminf_{n\rightarrow\infty}E_n^{\ell}(\omega,u_n)&\geq\liminf_{M\rightarrow\infty}\int_{0}^{1}J_{\mathrm{hom}}\left(w_M'(x) \right)\,\mathrm{d}x\geq\int_{0}^{1}J_{\mathrm{hom}}(u'(x))\,\mathrm{d}x.
\end{align}
Recalling $\infty>\liminf_{n\rightarrow\infty}E_n^{\ell}(\omega,u_n)$, Theorem~\ref{PropGelli} we obtain $D^su\geq0$ on $(0,1) $. An extension to $[0,1]$ can be done in the same way as in \cite[Thm.~4.2]{BraidesDalMasoGarroni1999}.\\

\step 3 'Limsup inequality'.

We need to show that for every $u\in BV^{\ell}(0,1)$ with $D^su\geq0$ there exists a sequence $(u_n)$ such that 
\begin{align}
\label{limsup}
\limsup_{n\rightarrow\infty}E_n^{\ell}(\omega,u_n)\leq E_{\mathrm{hom}}^{\ell}(u),
\end{align}
with $E_{\mathrm{hom}}^{\ell}(u)$. By Theorem~\ref{PropGelli}, it is sufficient to show \eqref{limsup} for $u\in W^{1,1}(0,1)$, instead of $u\in BV(0,1)$. This can be seen as follows: We know from Theorem~\ref{PropGelli} that the lower semicontinuous envelope of 
\begin{align*}
\mathcal{E}(u):=\begin{cases}
\int_{0}^{1}J_{\mathrm{hom}}(u'(x))\,\mathrm{d}x&\quad\text{for}\ u\in W^{1,1}(0,1),\\
+\infty&\quad\text{else.}
\end{cases}
\end{align*}
is $E_{\mathrm{hom}}^{\ell}(u)$, that is $\mathrm{sc}\,\mathcal{E}\equiv E_{\mathrm{hom}}^{\ell}$ with respect to the weak$^*$ convergence in $BV(0,1)$. Further, we know that the lower semicontinuous envelope with respect to the strong convergence in $L^1(0,1)$ can be even smaller, i.e.~$\mathrm{sc}_{L^1(0,1)}\mathcal{E}\leq\mathrm{sc}_{BV(0,1)}\mathcal{E}\equiv E_{\mathrm{hom}}^{\ell}$. That means that if we have shown \eqref{limsup} for $\mathcal{E}$, which means that we have
\begin{align*}
\Gamma\text{-}\limsup_{n\rightarrow\infty}E_n^{\ell}(\omega,u)\leq \mathcal{E}(u),
\end{align*}
then, with the definition of the lower semicontinuous envelope as $\mathrm{sc}\,f(x):=\sup\{\,g(x):g\ \text{l.s.c,}\ g\leq f \} $, we get
\begin{align*}
\Gamma\text{-}\limsup_{n\rightarrow\infty}E_n^{\ell}(\omega,u)\leq\mathrm{sc}_{L^1(0,1)}\mathcal{E}(u)\leq\mathrm{sc}_{BV(0,1)}\mathcal{E}(u)= E_{\mathrm{hom}}^{\ell}(u),
\end{align*}
because $\Gamma$-$\limsup$ is always lower semicontinuous. Therefore, we need to show \eqref{limsup} only for $u\in W^{1,1}(0,1)$. We prove this without taking into account the boundary values. For indicating this, we leave out the superscript $\ell$. The Dirichlet boundary conditions can then be added in the same way as in \cite[Thm.~4.2]{BraidesDalMasoGarroni1999}.\\

\textit{1) Affine functions.}

We start with constructing a recovery sequence for affine functions with $u'(x)=z$, $z\in(0,+\infty)$. For $z\notin(0,+\infty)$, the limsup-inequality is trivial because then we have $E_{\mathrm{hom}}(u)=\infty$, for $u(x)=zx$. With proposition \ref{Prop:Jhomzklein}, we get the existence of $\Omega_0\subset\Omega$ with $\mathbb{P}(\Omega')=1$ such that for all $z\in\mathbb{R}$ and all $A=[a,b)$, $a,b\in\mathbb{R}$ it holds true that
\begin{align}\label{minproblem}
\lim\limits_{n\rightarrow\infty}\dfrac{1}{|nA\cap\mathbb{Z}|}\inf\left\{\sum_{j=1}^{K}\sum_{\substack{i\in \mathbb Z \cap nA\\ i+j-1\in nA }}J_j\left(\omega,i,z+\dfrac{\phi^{i+j}-\phi^{i}}{j}\right),\,\phi\in \mathcal A_{N,K}^0(A)\right\}=J_{\mathrm{hom}}(z).
\end{align}
In the following, we will use the definitions 
\begin{align}\label{def:iminmax}
i_{min}^A:=\min\{i,i\in nA\cap\mathbb{Z}\}\quad \text{and}\quad i_{max}^A:=\max\{i,i\in nA\cap\mathbb{Z}\}.
\end{align}
Let us now consider an affine function $u(x):=zx$ for $z\in\mathbb{R}$. Let $\eta>0$ be a coarser scale. For simplicity, we assume $1/\eta\in\mathbb{N}$, such that the interval $[0,1]$ can be split equidistantly. The partition of the interval is labelled by $I_{k}^{\eta}:=[k\eta,(k+1)\eta)$ with $k=0,...,\frac{1}{\eta}-1$. An illustration of the two length scales $\lambda_n$ and $\eta$ is shown in Figure \ref{fig:zweiskalen}.
	\begin{figure}[t]
	\centering
			\includegraphics[width=0.5\linewidth]{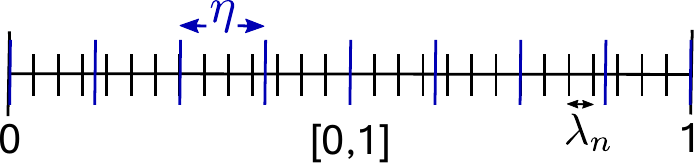}
			\caption{The two length scales $\lambda_n$ and $\eta$ involved in the proof of the limsup-inequality.}
	\label{fig:zweiskalen}
	\end{figure}

 Now, let $\eta$ be fixed. Then, for every $n\in\mathbb{N}$ there exists a minimizer $\phi_{n,I_k^{\eta}}:\{i:i\in \mathbb{Z}\cap nI_k^{\eta}\}\to(-\infty,+\infty]$ of the minimum problem in \eqref{minproblem} with $A=I_k^{\eta}$ for every $k=0,...,\frac{1}{\eta}-1$, which is interpolated to a piecewise affine function. Further, we define $\varphi_{n,I_k^{\eta}}(x):=\lambda_n\phi_{n,I_k^{\eta}}\left(\frac{x}{\lambda_n}\right)$ and
\begin{align}
\label{recovery}
u_{n,\eta}(x):=zx+\sum_{k=0}^{\frac{1}{\eta}-1}\varphi_{n,I_k^{\eta}}(x)\chi_{I_k^{\eta}}(x),
\end{align}
where $\chi_{I}$ is the characteristic function of the interval $I$. This is not yet the recovery sequence, but close by. By definition, it holds $u_{n,\eta}(0)=0$ and $u_{n,\eta}(1)=z:=\ell$ for every $n\in\mathbb{N}$. First, we show
\begin{align}
\label{attouchenergie0}
\limsup_{n\rightarrow\infty}E_n(\omega,u_{n,\eta})\leq J_{\mathrm{hom}}(z).
\end{align}
With the definition $E_n(\omega,u,I):=\sum_{j=1}^{K}\sum_{i=i_{min}^I}^{i_{max}^I+1-j}\lambda_nJ_j(\omega,i,\frac{u^{i+j}-u^{i}}{j\lambda_n})$ and $\phi_{n,I_1^{\eta}}^i:=\phi_{n,I_1^{\eta}}(i)$ for shorthand and by \eqref{minproblem}, it holds true that
\begin{align*}
E_n(\omega,u_{n,\eta},I_k^{\eta})&=\lambda_n\sum_{j=1}^{K}\sum_{i=i_{min}^{I_k^\eta}}^{i_{max}^{I_k^\eta}+1-j}J_j\left(\omega,i,z+\dfrac{\varphi_{n,I_k^{\eta}}^{i+j}-\varphi_{n,I_k^{\eta}}^{i}}{j\lambda_n}\right)\\
&=\lambda_n\sum_{j=1}^{K}\sum_{i=i_{min}^{I_k^\eta}}^{i_{max}^{I_k^\eta}+1-j}J_j\left(\omega,i,z+\dfrac{\phi_{n,I_k^{\eta}}^{i+j}-\phi_{n,I_k^{\eta}}^{i}}{j}\right)\rightarrow |I_k^{\eta}|J_{\mathrm{hom}}(z)\quad\text{for}\ n\rightarrow\infty.
\end{align*}
Since by construction
\begin{align*}
E_n(\omega,u_{n,\eta})=\sum_{k=0}^{\frac{1}{\eta}-1}E_n(\omega,u_{n,\eta},I_k^{\eta})+\sum_{k=0}^{\frac{1}{\eta}-2}\sum_{j=2}^{K}\sum_{s=0}^{j-2}\lambda_nJ_j\left(\omega,i_{max}^{I_k^\eta}-s, \dfrac{u_{n,\eta}^{i_{max}^{I_k^\eta}-s+j}-u_{n,\eta}^{i_{max}^{I_k^\eta}-s}}{j\lambda_n}\right),
\end{align*}
we have for the first part
\begin{align*}
\begin{split}
\lim\limits_{n\rightarrow\infty}\sum_{k=0}^{\frac{1}{\eta}-1}E_n(\omega,u_{n,\eta},I_k^{\eta})=\sum_{k=0}^{\frac{1}{\eta}-1} |I_k^{\eta}|J_{\mathrm{hom}}(z)=\sum_{k=0}^{\frac{1}{\eta}-1} \eta J_{\mathrm{hom}}(z)=J_{\mathrm{hom}}(z).
\end{split}
\end{align*}
The second part yields, noting that it holds true that $-s+j\leq K$ and $s\leq K-1$,
\begin{align*}
\begin{split}
&\sum_{k=0}^{\frac{1}{\eta}-2}\sum_{j=2}^{K}\sum_{s=0}^{j-2}\lambda_nJ_j\left(\omega,i_{max}^{I_k^\eta}-s, \dfrac{u_{n,\eta}^{i_{max}^{I_k^\eta}-s+j}-u_{n,\eta}^{i_{max}^{I_k^\eta}-s}}{j\lambda_n}\right)\\
=&\sum_{k=0}^{\frac{1}{\eta}-2}\sum_{j=2}^{K}\sum_{s=0}^{j-2}\lambda_nJ_j\left(\omega,i_{max}^{I_k^\eta}-s, z\right)\stackrel{(LJ2)}{\leq}\sum_{k=0}^{\frac{1}{\eta}-2}\sum_{j=2}^{K}\sum_{s=0}^{j-2}\lambda_nd\max\{\Psi(z),|z|\}\\
\leq& \lambda_nd\max\{\Psi(z),|z|\}\left(\frac{1}{\eta}-1\right)\frac12(K+1)K\to0\quad\text{for}\ n\to\infty.
\end{split}
\end{align*}

Together, this shows \eqref{attouchenergie0}. For later references, observe that this result is independent of $\eta$. Next, we show
\begin{align}
\label{attoucheta0}
\lim\limits_{\eta\to 0}\lim\limits_{n\to\infty}\lVert u_{n,\eta}-u\lVert_{L^1(0,1)}=0.
\end{align}
Since we know, that the energy of the recovery sequence has to be equi-bounded, we get from the compactness result \eqref{compactheitmitrandwerten} for all $k\in\{0,...,\frac{1}{\eta}-1\}$
\begin{align}
\label{ablschranke}
\lVert u_{n,\eta}'\rVert_{L^1(I_k^{\eta})}\leq (C+|z|)|I_k^{\eta}|,
\end{align}
because we have $u_{n,\eta}(b)-u_{n,\eta}(a)=z|I_k^{\eta}|+\varphi_{n,I_k^{\eta}}(b)-\varphi_{n,I_k^{\eta}}(a)=z|I_k^{\eta}|+0$, where $a:=\inf\{x:x\in I_k^{\eta}\}$ and $b:=\sup\{x:x\in I_k^{\eta}\}$. It follows $\lVert \varphi_{n,I_k^{\eta}}'\rVert_{L^1(I_k^{\eta})}\leq \tilde{C}\eta$, as 
\begin{align*}
\int_{I_k^{\eta}}|\varphi_{n,I_k^{\eta}}'(x)|\,\mathrm{d}x&=\int_{I_k^{\eta}}|\varphi_{n,I_k^{\eta}}'(x)+z-z|\,\mathrm{d}x\\
&\leq\int_{I_k^{\eta}}|u_{n,\eta}'(x)|\,\mathrm{d}x+\int_{I_k^{\eta}}|z|\,\mathrm{d}x\leq C|I_k^{\eta}|+2|z||I_k^{\eta}|=\tilde{C}|I_k^{\eta}|=\tilde{C}\eta.
\end{align*}
Recall that it holds true $|I_k^{\eta}|=\eta$ by definition. With this result, we get 
\begin{align*}
\int_{I_k^{\eta}}|\varphi_{n,I_k^{\eta}}(x)|\,\mathrm{d}x&=\int_{I_k^{\eta}}\left|\int_{j\eta}^{x}\varphi_{n,I_k^{\eta}}'(s)\,\mathrm{d}s\right|\,\mathrm{d}x\leq\int_{I_k^{\eta}}\int_{j\eta}^{x}\left|\varphi_{n,I_k^{\eta}}'(s)\right|\,\mathrm{d}s\,\mathrm{d}x\\
&\leq\int_{I_k^{\eta}}\int_{I_k^{\eta}}\left|\varphi_{n,I_k^{\eta}}'(s)\right|\,\mathrm{d}s\,\mathrm{d}x=|I_k^{\eta}|\int_{I_k^{\eta}}\left|\varphi_{n,I_k^{\eta}}'(s)\right|\,\mathrm{d}s\leq \tilde{C}\eta^2.
\end{align*}
This leads us to
\begin{align*}
\begin{split}
\lVert u_{n,\eta}-u\lVert_{L^1(0,1)}&=\int_{0}^{1}\left|\sum_{k=0}^{\frac{1}{\eta}-1}\varphi_{n,I_k^{\eta}}(x)\chi_{I_k^{\eta}}(x)\right|\,\mathrm{d}x\\
&\leq \sum_{k=0}^{\frac{1}{\eta}-1}\int_{I_k^{\eta}}\left|\varphi_{n,I_k^{\eta}}(x)\right|\,\mathrm{d}x\leq \sum_{k=0}^{\frac{1}{\eta}-1}\tilde{C}\eta^2=\frac{1}{\eta}\tilde{C}\eta^2=\tilde{C}\eta,
\end{split}
\end{align*}
which proves \eqref{attoucheta0} to be true. Since our aim is to construct a recovery sequence, which is only dependent on $n$, we have to pass to an appropriate subsequence. This is done with the help of the Attouch Lemma. Combined, \eqref{attouchenergie0} and \eqref{attoucheta0} yield that
\begin{align*}
	\limsup\limits_{\eta\rightarrow0}\limsup\limits_{n\rightarrow\infty}\left(|E_n(\omega,u_{n,\eta})-J_{\mathrm{hom}}(z)|+\lVert u_{n,\eta}-u\lVert_{L^1(0,1)} \right)=0.
\end{align*}
Using this result with the Attouch Lemma (Theorem \ref{attouch}), we therefore get the existence of a subsequence $\eta_n$ with $\eta_n\to0$ for $n\to\infty$ and
\begin{align*}
0&\leq\limsup_{n\to\infty}\left(|E_n(\omega,u_{n,\eta_n})-J_{\mathrm{hom}}(z)|+\lVert u_{n,\eta_n}-u\lVert_{L^1(0,1)} \right)\\&\leq \limsup_{\eta\to 0}\limsup_{n\to\infty}\left(|E_n(\omega,u_{n,\eta})-J_{\mathrm{hom}}(z)|+\lVert u_{n,\eta}-u\lVert_{L^1(0,1)} \right)=0
\end{align*}
Finally, this shows that for $u_{n,\eta_n}$ it holds true that $E_n(\omega,u_{n,\eta_n})\to J_{\mathrm{hom}}(z)$ and $u_{n,\eta_n}\to u$ in $L^1(0,1)$ for $n\rightarrow\infty$. Therefore $(u_{n,\eta_n})$ is the recovery sequence for the affine function $u(x)=zx$ with $z\in\mathbb{R}$. Moreover, we also have $u_{n,\eta_n}\to u$ weakly$^*$ in $BV(0,1)$, since from \eqref{ablschranke} we have $\limsup_{n\to\infty} \|u_{n,\eta_n}'\|_{L^1(0,1)}<\infty$.\\

Note that the same construction can be applied on any interval $(a,b)$ instead of $[0,1]$.\\

\textit{2) Piecewise affine functions.}

With this construction of a recovery sequence for affine functions, we can construct a recovery sequence for piecewise affine functions by dividing the interval $[0,1]$ into parts where the function is affine and repeating the above construction. The only difficulty lies in gluing the different parts together. We show this by considering a function $u$ with
\begin{align*}
u(x):=\begin{cases}
z_1x\quad&\text{for}\ x\in[0,a),\\
z_1a+z_2(x-a)\quad&\text{for}\ x\in[a,1],
\end{cases}
\end{align*}
for $0<a<1$. This function is piecewise affine with $u'(x)=z_1$ on $(0,a)$ and $u'(x)=z_2$ on $(a,1)$. Let $(u_n^1)$ be the recovery sequence for $u(x)=z_1x$ on $(0,a)$ and $(u_n^2)$ the recovery sequence for $u(x)=z_2x$ on $(a,1)$ constructed in Step 1. Without relabelling it, we extend $u_n^1$ continuously with constant slope $z_1$ on $(i_{max}^{[0,a)},a)$, because it is not defined there yet. The same we do for $u_N^2$ on $(a,i_{max}^{[a,1)})$ with slope $z_2$. Then, we claim that 
\begin{align}
\label{recoverysequencepwa}
u_n(x):=u_n^1(x)\chi_{[0,a)}+\left(z_1a+u_n^2(x-a) \right)\chi_{[a,1]}
\end{align} 
is a recovery sequence for $u$. Indeed, it holds true that
\begin{align*}
u_n(x)&=u_n^1(x)\chi_{[0,a)}+\left(z_1a+u_n^2(x-a) \right)\chi_{[a,1]}\\
&\rightarrow z_1 x\chi_{[0,a)}+\left(z_1a+z_2(x-a) \right)\chi_{[a,1]}=u(x)
\end{align*}
in $L^1(0,1)$ for $n\rightarrow\infty$, since both sequences are recovery sequences. Further, it is
\begin{align*}
E_n(\omega,u_{n})= &E_n(\omega,u_{n}^1,[0,a))+E_n(\omega,u_{n}^2,[a,1))\\&+\sum_{j=2}^{K}\sum_{s=0}^{j-2}\lambda_nJ_j\left(\omega,i_{max}^{[0,a)}-s, \dfrac{u_n^{i_{max}^{[0,a)}-s+j}-u_n^{i_{max}^{[0,a)}-s}}{j\lambda_n}\right).
\end{align*}
By construction, we have that
\begin{align*}
\lim\limits_{n\to\infty}\left(E_n(\omega,u_{n}^1,[0,a))+E_n(\omega,u_{n}^2,[a,1))\right)&=\int_{0}^{a}J_{\mathrm{hom}}(z_1)\,\mathrm{d}x+\int_{a}^{1}J_{\mathrm{hom}}(z_2)\,\mathrm{d}x\\&=\int_{0}^{1}J_{\mathrm{hom}}(u'(x))\,\mathrm{d}x.
\end{align*}
For the given values of $s$ and $j$, we get
\begin{align*}
\dfrac{u_n^{i_{max}^{[0,a)}-s+j}-u_n^{i_{max}^{[0,a)}-s}}{j\lambda_n}&=\dfrac{z_1a+z_2\left(\left(i_{max}^{[0,a)}-s+j \right)\lambda_n-a \right)-z_1\left(i_{max}^{[0,a)}-s)\right)\lambda_n}{j\lambda_n}\\ &=(z_1-z_2)\dfrac{a-\lambda_n\left(i_{max}^{[0,a)}-s\right)}{j\lambda_n}+z_2=:z_n,
\end{align*}
and since $\dfrac{a-\lambda_n\left(i_{max}^{[0,a)}-s\right)}{j\lambda_n}\rightarrow\dfrac{s}{j}\leq1$ for $n\to\infty$, it holds true that $z_n$ is a convex combination of $z_1$ and $z_2$, and therefore
\begin{align*}
&\sum_{j=2}^{K}\sum_{s=0}^{j-2}\lambda_nJ_j\left(\omega,i_{max}^{[0,a)}-s, \dfrac{u^{i_{max}^{[0,a)}-s+j}-u^{i_{max}^{[0,a)}-s}}{j\lambda_n}\right)=\sum_{j=2}^{K}\sum_{s=0}^{j-2}\lambda_nJ_j\left(\omega,i_{max}^{[0,a)}-s, z_n\right)\\\stackrel{(LJ2)}{\leq}&\sum_{j=2}^{K}\sum_{s=0}^{j-2}\lambda_nd\max\{\Psi\left(z_n\right),|z_n|\}\leq\lambda_ndC\frac{1}{2}(K+1)K\to0\quad\text{for}\ n\to\infty.
\end{align*}
Altogether, this shows the limsup inequality
\begin{align*}
\limsup_{n\to\infty}E_n(\omega,u_{n})\leq\int_{0}^{1}J_{\mathrm{hom}}(u'(x))\,\mathrm{d}x.
\end{align*}

\textit{3) $W^{1,1}$-functions.}

Now, we provide arguments to pass to functions $u\in W^{1,1}$:
For $u\in W^{1,1}(0,1)$, consider the piecewise affine interpolation $u_N$ of $u$ with grid points $t_N^j$, which means $u_N\in C(0,1)$ is affine on $[t^{j-1}_N,t^j_N)$ an it holds $u_N(t_j^N)=u(t_j^N)$ for all $j=0,...,N$. This is well defined because we can consider $u$ as its absolute continuous representative. Then, it holds
\begin{align}
\label{w11gl1}
\begin{split}
&E_{\mathrm{hom}}(u)=\int_{0}^{1}J_{\mathrm{hom}}(u')\,\mathrm{d}x=\sum_{i=1}^{N}\left(t_N^{i-1}-t_N^{i}\right)\dfrac{1}{t_N^{i-1}-t_N^{i}}\int_{t_N^{j-1}}^{t_N^{j}}J_{\mathrm{hom}}(u'(x))\,\mathrm{d}x\\
\stackrel{\text{Jensen}}{\geq}&\sum_{i=1}^{N}\left(t_N^{i-1}-t_N^{i}\right)J_{\mathrm{hom}}\left(\dfrac{1}{t_N^{i-1}-t_N^{i}}\int_{t_N^{j-1}}^{t_N^{j}}u'(x)\,\mathrm{d}x \right)\\
=&\sum_{i=1}^{N}\left(t_N^{i-1}-t_N^{i}\right)J_{\mathrm{hom}}\left(\dfrac{1}{t_N^{i-1}-t_N^{i}}\left(u(t_N^{i})-u(t_N^{i-1}) \right) \right)\\
=&\sum_{i=1}^{N}\left(t_N^{i-1}-t_N^{i}\right)J_{\mathrm{hom}}\left(\dfrac{1}{t_N^{i-1}-t_N^{i}}\left(u_N(t_N^{i})-u_N(t_N^{i-1}) \right) \right)=\int_{0}^{1}J_{\mathrm{hom}}(u_N')=E_{\mathrm{hom}}(u_N).
\end{split}
\end{align}

We know that the $\Gamma$-$\limsup$ is lower semicontinuous and Theorem \ref{thm:interpolationaffine} tells us that $u_N\rightharpoonup^*u$ in $BV(0,1)$. Since we know the $\Gamma$-$\limsup$ of piecewise affine functions from the previous steps, we have
\begin{align*}
&\Gamma\text{-}\limsup_{n\rightarrow\infty} E_n(\omega,u)\stackrel{\text{l.s.c}}{\leq}\liminf_{N\rightarrow\infty}\left\{\Gamma\text{-}\limsup_{n\rightarrow\infty} E_n(\omega,u_N) \right\}\\
&\hspace{10mm}\leq \limsup_{N\rightarrow\infty}\int_{0}^{1}J_{\mathrm{hom}}(u_N'(x))\,\mathrm{d}x\stackrel{\eqref{w11gl1}}{\leq}\limsup_{N\rightarrow\infty}\int_{0}^{1}J_{\mathrm{hom}}(u'(x))\,\mathrm{d}x=E_{\mathrm{hom}}(u),
\end{align*}
which gives us the limsup-inequality for $W^{1,1}(0,1)$. As argued in the beginning of the proof, this shows the limsup-inequality for the functional without boundary constraints.\\

\step 4 Convergence of minimum problems.

The convergence of minimum problems follows directly from the coercivity of $E_n^{\ell}(\omega,\cdot)$ and the fundamental Theorem of $\Gamma$-convergence (see e.g.~\cite[Thm.1.21]{Braides}). Since $J_{\mathrm{hom}}$ is decreasing, we get from the Jensen inequality and from $D^su\geq 0$
\begin{align*}
	\min_u E_{\mathrm{hom}}^{\ell}(u)\geq J_{\mathrm{hom}}\left(\int_{0}^{1}u'(x)\,\mathrm{d}x\right)\geq J_{\mathrm{hom}}(Du[0,1])=J_{\mathrm{hom}}(\ell).
\end{align*}
And the reverse inequality, we get from testing with $u(x)=\ell x$.

\end{proof}

\subsection{Properties of $J_{\rm hom}^L$, proofs of Proposition~\ref{Prop:existencejhom} and \ref{Prop:jhomlcontinuous} }
\label{sec:4-2-and4-3}

For later reference, we point out two further special properties of the approximating functions.
\begin{proposition}
\label{prop:ML}
Let the approximation be defined as above. Let $A=[a,b)$, $a,b\in\mathbb{R}$, be an interval, and $A_N:=NA\cap\mathbb{Z}$.
\begin{itemize}
    \item[(i)] There exists $L^*$ such that for all $L>L^*$ it holds true that
\begin{align}
\label{schrankeableitung}
m_j^L(\omega)\leq -M_L,
\end{align}
with a constant $M_L>0$ independent of $j$ and $\omega$. Further, we have that
\begin{align}
\label{schrankeableitungwachstum}
M_L\to\infty\quad\text{for}\ L\to\infty.
\end{align}
    \item[(ii)] It exists $\Omega_1\subset\Omega$ with $\mathbb{P}(\Omega_1)=1$ such that for all $\omega\in\Omega_1$, all $j=1,...,K$, it holds true that \begin{align}
\label{lipschitzhoelderestimate}
	\begin{split}
	\dfrac{1}{|A_N|}\sum_{j=1}^{K}\sum_{i\in A_N}\left|J_j^L\left(\omega,i,x\right)-J_j^L\left(\omega,i,y\right)\right|
	\leq C^{L,H,(N)}(\omega)\max\{|x-y|^\alpha,|x-y|\},
	\end{split}
\end{align}
for every $x,y\in\mathbb{R}$ and independent of the choice of $A$,  with $0<C^{L,H,(N)}(\omega)\to C^{L,H}$ for $N\to\infty$.
\end{itemize}
\end{proposition}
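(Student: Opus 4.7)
\medskip
\noindent\textbf{Proof plan for Proposition~\ref{prop:ML}.}

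For part (i), the plan is to exploit convexity of $J_j(\omega,\cdot)$ on $(0,\delta_j(\omega))$ together with the lower bound \eqref{LJ2abschaetzung}. Since $z_L<1/d<\delta_j(\omega)$, every element of the subdifferential at $z_L$ satisfies the slope inequality
\begin{equation*}
m_j^L(\omega)\;\leq\;\frac{J_j(\omega,\delta_j(\omega))-J_j(\omega,z_L)}{\delta_j(\omega)-z_L}.
\end{equation*}
I would then use the bounds from (LJ2) to control both factors uniformly in $\omega$ and $j$: the numerator is bounded above by $b-(\tfrac{1}{d}\Psi(z_L)-d)$ using $J_j(\delta_j)\leq 0$ (in fact $|J_j(\delta_j)|\leq b$) and $J_j(z_L)\geq\tfrac{1}{d}\Psi(z_L)-d$; the denominator is bounded below by $1/d-z_L$, which for $L$ large enough (say $z_L<1/(2d)$) exceeds $1/(2d)$. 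This yields
\begin{equation*}
m_j^L(\omega)\;\leq\;2d\bigl(b+d\bigr)-2\Psi(z_L)\;=:\;-M_L,
\end{equation*}
and the growth assumption \eqref{growthpsi} on $\Psi$ (which is deterministic and independent of $j,\omega$) gives $M_L\to\infty$ as $L\to\infty$.

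For part (ii), the plan is to split the real line at $\delta_j(\tau_i\omega)$ and control $J_j^L(\omega,i,\cdot)=J_j^L(\tau_i\omega,\cdot)$ by a Lipschitz constant on the left half and a H\"older constant on the right. On $(-\infty,\delta_j(\tau_i\omega)]$, the function $J_j^L$ is linear on $(-\infty,z_L]$ with slope $m_j^L$ and coincides with $J_j$ on $[z_L,\delta_j(\tau_i\omega)]$; by Lemma~\ref{lem:lipschitz} applied with, e.g., $\rho=z_L/2$, the latter is Lipschitz with constant $C_{\rm Lip}(z_L/2)$ depending only on $d,\Psi$, and the slope $m_j^L$ is bounded in absolute value by the same constant (since $m_j^L$ lies in the subdifferential at $z_L$, which is contained in the range of difference quotients on $(z_L/2,\delta_j)$). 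On $(\delta_j(\tau_i\omega),+\infty)$, $J_j^L=J_j$ is H\"older with coefficient $C_j^H(\tau_i\omega)$ by definition. Splitting $|J_j^L(\omega,i,x)-J_j^L(\omega,i,y)|$ at $\delta_j(\tau_i\omega)$ when $x,y$ straddle it, and using $|x-y|\leq\max\{|x-y|,|x-y|^\alpha\}$ and similarly for $|x-y|^\alpha$, I would obtain the pointwise bound
\begin{equation*}
\bigl|J_j^L(\omega,i,x)-J_j^L(\omega,i,y)\bigr|\;\leq\;\bigl(C_{\rm Lip}(z_L/2)+C_j^H(\tau_i\omega)\bigr)\,\max\{|x-y|^\alpha,|x-y|\}.
\end{equation*}

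Summing over $i\in A_N$ and $j=1,\dots,K$ and dividing by $|A_N|$, I would define
\begin{equation*}
C^{L,H,(N)}(\omega)\;:=\;KC_{\rm Lip}(z_L/2)+\sum_{j=1}^{K}\frac{1}{|A_N|}\sum_{i\in A_N}C_j^H(\tau_i\omega),
\end{equation*}
so that the desired estimate \eqref{lipschitzhoelderestimate} holds. The convergence $C^{L,H,(N)}(\omega)\to C^{L,H}:=KC_{\rm Lip}(z_L/2)+\sum_{j=1}^{K}\mathbb{E}[C_j^H]$ on a set $\Omega_1$ of full measure, independently of $A$, follows directly from Proposition~\ref{Prop:averages} (whose hypotheses include the integrability assumption (H1)), intersecting the exceptional sets for each of the finitely many $j$'s.

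The only technical subtlety I expect is ensuring that the Lipschitz constant used to control the linear continuation at $z<z_L$ is independent of $\omega$; invoking Lemma~\ref{lem:lipschitz} on the slightly enlarged interval $(z_L/2,\delta_j)$ rather than $(z_L,\delta_j)$ sidesteps this, because the subdifferential of a convex function at an interior point is dominated by its Lipschitz constant on any neighbourhood. Everything else is bookkeeping and an application of the ergodic limit already available in Proposition~\ref{Prop:averages}.
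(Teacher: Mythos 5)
Your proof is correct and follows essentially the same route as the paper for both parts: in (i), a slope inequality coming from the definition of the subdifferential combined with the two-sided bounds in (LJ2) and the growth \eqref{growthpsi} of $\Psi$; in (ii), a case split at $\delta_j(\tau_i\omega)$ combining the Lipschitz bound from Lemma~\ref{lem:lipschitz} on the left with the H\"older coefficient $C_j^H$ on the right, followed by an application of Proposition~\ref{Prop:averages} to pass to the limit in $N$. Your choice to invoke Lemma~\ref{lem:lipschitz} with $\rho=z_L/2$ rather than $\rho=z_L$ is a minor but sensible refinement that cleanly controls the left derivative at $z_L$ (which involves points to the left of $z_L$); the paper's bound with $C_{\rm Lip}(z_L)$ is the same in spirit, and the $2\max\{\cdot,\cdot\}$ versus sum-of-constants discrepancy is immaterial.
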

\begin{proof}
(i) By definition of the subdifferential, it holds true that
\begin{align*}
J_j(\omega,y)\geq J_j(\omega,x)+m_j^L(\omega)(y-x),
\end{align*}
for every $x,y\in(0,\frac{1}{d}]$. Setting $y=\frac{1}{d}$ and $y=z_L$, we get
\begin{align*}
m_j^L(\omega)\leq \dfrac{J_j(\omega,\frac{1}{d})-J_j(\omega,z_L)}{\frac{1}{d}-z_L}\leq\dfrac{d\max\left\{\Psi(\frac{1}{d}),|\frac{1}{d}| \right\}-\left(\frac{1}{d}\Psi(z_L)-d\right)}{\frac{1}{d}-z_L}.
\end{align*}
The denominator is always positive and $\Psi(z_L)\rightarrow\infty$ for $L\to\infty$. Note, that $m_j^L$ is always negative, by definition. The right hand side gets smaller and negative with $L\to\infty$. Therefore, there exists $L^*$ such that for all $L>L^*$ it holds true that
\begin{align*}
m_j^L(\omega)\leq -M_L,
\end{align*}
with a constant $M_L>0$ independent of $j$ and $\omega$. Further, by \eqref{growthpsi}, we have that
\begin{align*}
M_L\to\infty\quad\text{for}\ L\to\infty,
\end{align*}
which proves (i).\\

(ii) It holds true for every $x,y\in\mathbb{R}$ that
\begin{align*}
	\begin{split}
	&\dfrac{1}{|A_N|}\sum_{j=1}^{K}\sum_{i\in A_N}\left|J_j^L\left(\omega,i,x\right)-J_j^L\left(\omega,i,y\right)\right|\\
	\leq& 2\max\left\{KC_{\rm Lip}(z_L),\sum_{j=1}^{K}\dfrac{1}{|A_N|}\sum_{i\in A_N}C_j^{H}(\tau_i\omega)\right\}\max\{|x-y|^\alpha,|x-y|\}.
	\end{split}
\end{align*}
	This estimate can be derived as follows: recall that for a fixed $L$, the Lipschitz constant of $J_j^L(\omega,i,\cdot)$ on $(z_L,\delta)$ is bounded by $C_{\rm Lip}(z_L)$ due to Lemma \ref{lem:lipschitz}. By monotonicity and convexity of $J_j(\omega,i,\cdot)$, the Lipschitz constant of $J_j^L(\omega,i,\cdot)$ on $(-\infty,z_L)$ is also bounded by $C_{\rm Lip}(z_L)$, by construction of the approximating function. Further, $C_j^{H}(\tau_i\omega)$ is the H\"older constant of $J_j^L(\omega,i,\cdot)$ on $[\delta_j(\tau_i\omega),+\infty)$, by definition (see Proposition \ref{Prop:averages} and the related definitions). Now, we have to distinguish between three cases: (i) $x$ and $y$ are both greater than $\delta_j(\tau_i\omega)$, (ii) both are less than $\delta_j(\tau_i\omega)$ and (iii) one is less and one is greater than $\delta_j(\tau_i\omega)$. In the first case (i) the H\"older estimate holds, in the second one (ii) we can use the Lipschitz estimate and in the third one (iii) we can insert $\pm J_j^L(\omega, i,\delta_j(\tau_i\omega))$ and use the triangle inequality, which results in the factor $2$. Since the constants $C_{\rm Lip}(z_L)$ and $C_j^H$ are all positive, we still increase the estimate, if we replace the sums over $B_j$ by sums over $A_N$.

	 Due to (H1) and Proposition \ref{Prop:averages}, it exists $\Omega_1\subset\Omega$ with $\mathbb{P}(\Omega_1)=1$ such that for all $\omega\in\Omega_1$, all $j=1,...,K$, the sum on the right hand side is convergent. Therefore, we finally get 
\begin{align*}
	\begin{split}
	\dfrac{1}{|A_N|}\sum_{j=1}^{K}\sum_{i\in A_N}\left|J_j^L\left(\omega,i,x\right)-J_j^L\left(\omega,i,y\right)\right|
	\leq C^{L,H,(N)}(\omega)\max\{|x-y|^\alpha,|x-y|\},
	\end{split}
\end{align*}
for every $x,y\in\mathbb{R}$ and independent of the choice of $A$, with $C^{L,H,(N)}(\omega)\to C^{L,H}$ almost everywhere for $N\to\infty$. This proves (ii).
\end{proof}

\begin{proof}[Proof of Proposition~\ref{Prop:existencejhom}]
We will prove in the following the pointwise convergence of $J_{\mathrm{hom}}^{L,(N)}(\cdot,z,A)$ almost everywhere on $\Omega$ to a function $J_{\mathrm{hom}}(z)$ independent of $\omega$ and $A$. The upper bound from (LJ2) together with the dominated convergence theorem then yields \eqref{lim:JhomLN}.

\step 1 Fixed $z\in\mathbb{R}$ and intervals $A=[a,b)$ with $a,b\in\mathbb{Z}$.

	First, we prove the assumption for a fixed $z\in\mathbb{R}$. As $J_{\mathrm{hom}}^{L,(N)}(\omega,z,\cdot)$ is subadditive due to the zero boundary constraint and $J_{\mathrm{hom}}^{L,(N)}$ is stationary and ergodic, the Ergodic Theorem \ref{Thm:AkcogluKrengel} due to Akcoglu and Krengel can be applied. Therefore, there exists $\Omega_z\subset\Omega$ with $\mathbb{P}(\Omega_z)=1$ such that for every $\omega\in\Omega_z$ and for every $A=[a,b)$ with $a,b\in\mathbb{Z}$, the limit
	\begin{align*}
		\lim\limits_{N\rightarrow\infty} J_{\mathrm{hom}}^{L,(N)}(\omega,z,A)
	\end{align*}
	exists and is independent of $\omega$ and $A$. Note, that this holds true because of the countability of the intervals, since we only demand for $a,b\in\mathbb{Z}$. Otherwise, the property $\mathbb{P}(\Omega_z)=1$ cannot be ensured. More precisely, it holds true that $\Omega_z=\bigcap_{a,b\in\mathbb{Z}}\Omega_A$, with $\Omega_A\subset\Omega$ being the set on which the ergodic theorem holds true for a fixed $A$. Considering $A=[0,N)$, we get
	\begin{align*}
		J_{\mathrm{hom}}^{L}(z)=\lim\limits_{N\to\infty}J_{\mathrm{hom}}^{L,(N)}(\omega,z,A).
	\end{align*}

\step 2 Fixed $z\in\mathbb{R}$ and intervals $A=[a,b)$ with $a,b\in\mathbb{R}$.
	
	In order to pass to general intervals with $a,b\in\mathbb{R}$, we argue as in \cite[Proposition 1]{DalMasoModica1985}. For every $\delta>0$, there exists $T$ big enough and intervals $A_\delta^-$, $A_\delta^+$ with $a_{\delta}^-,b_{\delta}^-,a_{\delta}^+,b_{\delta}^+\in\mathbb{Z}$ such that it holds true
	\begin{align}
	\label{setrelations}
	A_\delta^-\subset TA\subset A_\delta^+,\quad \dfrac{|A_\delta^-|}{|TA|}\geq 1-\delta,\quad\dfrac{|TA|}{|A_{\delta}^+|}\geq 1-\delta.
	\end{align}
	From (LJ2), we get, for all intervals $B\subset A$ and $N$ big enough, the inequality
	\begin{align}
	\label{inequ}
		J_{\mathrm{hom}}^{L,(N)}(\omega,z,A)\leq J_{\mathrm{hom}}^{L,(N)}(\omega,z,B)+\dfrac{|N(A\setminus B)\cap\mathbb{Z}|}{|N(A)\cap\mathbb{Z}|}C_z,
	\end{align}
	with a constant $C_z$ depending on $z$, which can be seen as follows. Taking a minimizer $\phi$ of the minimum problem related to $B$, with notation from \ref{def:iminmax}, one has
	\begin{align*}
	&J_{\mathrm{hom}}^{L,(N)}(\omega,z,A)\leq \dfrac{1}{|NB\cap\mathbb{Z}|}\sum_{j=1}^{K}\sum_{i=i_{min}^B}^{i_{max}^B+1-j}J_j^L\left(\omega,i,z+\dfrac{\phi^{i+j}-\phi^i}{j}\right)\\
	&\hspace{22mm}+\dfrac{1}{|NA\cap\mathbb{Z}|}\sum_{j=1}^{K}\sum_{\substack{i=i_{min}^{A\setminus B}\\ i\in N(A\setminus B)\cap\mathbb{Z}}}^{i_{max}^{A\setminus B}+1-j}J_j^L(\omega,i,z)+\dfrac{1}{|NA\cap\mathbb{Z}|}\sum_{j=2}^{K}\sum_{i=i_{max}^{B}+2-j}^{i_{max}^{B}}J_j^L(\omega,i,z)\\
	&\stackrel{(LJ2)}{\leq}J_{\mathrm{hom}}^{L,(N)}(\omega,z,B)+\dfrac{1}{|NA\cap\mathbb{Z}|}d\max\{\Psi(z),|z|\}\sum_{j=1}^{K}\left(\sum_{\substack{i=i_{min}^{A\setminus B}\\ i\in N(A\setminus B)\cap\mathbb{Z}}}^{i_{max}^{A\setminus B}+1-j}1+\sum_{i=i_{max}^{B}+2-j}^{i_{max}^{B}}1\right)\\
	&\leq J_{\mathrm{hom}}^{L,(N)}(\omega,z,B)+\dfrac{1}{|NA\cap\mathbb{Z}|}d\max\{\Psi(z),|z|\}\left( K|N(A\setminus B)\cap\mathbb{Z}|+K^2\right),
	\end{align*}
	where \eqref{inequ} then holds true for $N$ big enough. Now, we get from Step 1
	\begin{align*}
	&J_{\mathrm{hom}}^{L}(z)=\lim\limits_{N\to\infty}J_{\mathrm{hom}}^{L,(N)}(\omega,z,A_\delta^+)\\
	\stackrel{\eqref{inequ}}{\leq}&\liminf_{N\to\infty}J_{\mathrm{hom}}^{L,(N)}(\omega,z,TA)+\liminf_{N\to\infty}\dfrac{|N(A_\delta^+\setminus TA)\cap\mathbb{Z}|}{|N(A_\delta^+)\cap\mathbb{Z}|}C_z\\
	=&\liminf_{N\to\infty}J_{\mathrm{hom}}^{L,(N)}(\omega,z,TA)+\dfrac{|(A_\delta^+\setminus TA)|}{|(A_\delta^+)|}C_z \stackrel{\eqref{setrelations}}{\leq}\limsup_{N\to\infty}J_{\mathrm{hom}}^{L,(N)}(\omega,z,TA)+\delta C_z\\
	\stackrel{\eqref{inequ}}{\leq}&\lim_{N\to\infty}J_{\mathrm{hom}}^{L,(N)}(\omega,z,A_\delta^-)+\left(\delta+\dfrac{|(TA\setminus A_\delta^-)|}{|(TA)|}\right)C_z\stackrel{\eqref{setrelations}}{=}J_{\mathrm{hom}}^{L}(z)+2C_z\delta.
	\end{align*}
	This shows
	\begin{align*}
		J_{\mathrm{hom}}^{L}(z)=\lim\limits_{N\to\infty}J_{\mathrm{hom}}^{L,(N)}(\omega,z,A),
	\end{align*}
	for $A=[a,b)$ with $a,b\in\mathbb{R}$, since $\delta$ can be chosen arbitrarily small. Note that for fixed $T>0$, $\lim\limits_{N\to\infty}J_{\mathrm{hom}}^{L,(N)}(\omega,z,TA)$ and $\lim\limits_{N\to\infty}J_{\mathrm{hom}}^{L,(N)}(\omega,z,A)$ are the same. \\

\step 3 $z\in\mathbb{R}$ and intervals $A=[a,b)$ with $a,b\in\mathbb{R}$.	
	
	With the definition of $\Omega_z$ from the previous steps, we define $\Omega_0:=\bigcap_{z\in\mathbb{Q}}\Omega_z$. It holds true that $\mathbb{P}(\Omega_0)=1$ and that we have for every $\omega\in\Omega_0$ 
	\begin{align}
	\label{jhomfuerQ}
		J_{\mathrm{hom}}^{L}(z)=\lim\limits_{N\to\infty}J_{\mathrm{hom}}^{L,(N)}(\omega,z,A),
	\end{align}
	for arbitrary $A$ and all $z\in\mathbb{Q}$. This was shown in the steps before.	
	
	Now, we derive the existence of the limit of $J_{\mathrm{hom}}^{L,(N)}(\omega,z,A)$ also for $z\in\mathbb{R}\setminus\mathbb{Q}$ and $\omega\in\Omega_0$. Note, that the ergodic theorem provides existence of that limit only for $\omega\in\Omega_z$ and not for $\omega\in\Omega_0$. For this, let $z\in\mathbb{R}\setminus\mathbb{Q}$ and $(z_k)_{k\in\mathbb{N}}\subset\mathbb{Q}$ be a sequence with $z_k\rightarrow z$. Strictly speaking, we also can assume $z\in\mathbb{R}$, but it is not necessary, because we already dealt with the case $z\in\mathbb{Q}$. By contrast, the assumption $(z_k)_{k\in\mathbb{N}}\subset\mathbb{Q}$ is essential, because with this we can use \eqref{jhomfuerQ} for $z_n$ in the following.
	
	With notation from \ref{def:iminmax}, we denote the minimizer related to the minimum problem of $J_{\mathrm{hom}}^{L,(N)}(\omega,z,A)$ by $\phi_{N,z}:(NA\cap\mathbb{Z})\cup\{i_{max}^A+1\}\to\mathbb{R}$ with $\phi_{N,z}^{i_{min}^A}=0=\phi_{N,z}^{i_{max}^A+1}$ (we give up the index $A$ for the minimizer for better readability), which means that it holds true that
	\begin{align}
	\label{beginchange}
	J_{\mathrm{hom}}^{L,(N)}(\omega,z,A)=\dfrac{1}{|NA\cap\mathbb{Z}|}\sum_{j=1}^{K}\sum_{i=i_{min}^A}^{i_{max}^A+1-j}J_j^L\left(\omega,i,z+\dfrac{\phi_{N,z}^{i+j}-\phi_{N,z}^{i}}{j}\right).
	\end{align}
Therefore, we have
	\begin{align*}
	\begin{split}
	&J_{\mathrm{hom}}^{L,(N)}(\omega,z,A)=\dfrac{1}{|NA\cap\mathbb{Z}|}\sum_{j=1}^{K}\sum_{i=i_{min}^A}^{i_{max}^A+1-j}J_j^L\left(\omega,i,z+\dfrac{\phi_{N,z}^{i+j}-\phi_{N,z}^{i}}{j}\right)\\
	=&\dfrac{1}{|NA\cap\mathbb{Z}|}\sum_{j=1}^{K}\sum_{i=i_{min}^A}^{i_{max}^A+1-j}J_j^L\left(\omega,i,z_k+\dfrac{\phi_{N,z}^{i+j}-\phi_{N,z}^{i}}{j}\right)\\&+\dfrac{1}{|NA\cap\mathbb{Z}|}\sum_{j=1}^{K}\sum_{i=i_{min}^A}^{i_{max}^A+1-j}\left(J_j^L\left(\omega,i,z+\dfrac{\phi_{N,z}^{i+j}-\phi_{N,z}^{i}}{j}\right)-J_j^L\left(\omega,i,z_k+\dfrac{\phi_{N,z}^{i+j}-\phi_{N,z}^{i}}{j}\right) \right)\\
	\geq& J_{\mathrm{hom}}^{L,(N)}(\omega,z_k,A)\\&-\dfrac{1}{|NA\cap\mathbb{Z}|}\sum_{j=1}^{K}\sum_{i=i_{min}^A}^{i_{max}^A+1-j}\left|J_j^L\left(\omega,i,z+\dfrac{\phi_{N,z}^{i+j}-\phi_{N,z}^{i}}{j}\right)-J_j^L\left(\omega,i,z_k+\dfrac{\phi_{N,z}^{i+j}-\phi_{N,z}^{i}}{j}\right)\right|.
	\end{split}
	\end{align*}
	Since $|z-z_k|\leq|z-z_k|^\alpha$ for $k$ large enough, we continue with this estimate by using \eqref{lipschitzhoelderestimate} and get
	\begin{align}
	\label{gl}
	\begin{split}
	&J_{\mathrm{hom}}^{L,(N)}(\omega,z,A)\geq J_{\mathrm{hom}}^{L,(N)}(\omega,z_k,A)-C^{L,H,(N)}(\omega)|z-z_k|^\alpha.
	\end{split}
	\end{align}
	We now calculate first the limit $\liminf_{N\rightarrow\infty}$, with $C^{L,H,(N)}(\omega)\to C^{L,H}$ from \eqref{lipschitzhoelderestimate},  and subsequently $\limsup_{k\rightarrow\infty}$ of \eqref{gl}. Since we assumed $(z_k)_{k\in\mathbb{N}}\subset\mathbb{Q}$, we get
	\begin{align*}
	\liminf\limits_{N\rightarrow\infty}J_{\mathrm{hom}}^{L,(N)}(\omega,z,A)\geq \limsup_{k\rightarrow\infty}J_{\mathrm{hom}}^L(z_n).
	\end{align*}
	Now, we can restart the whole calculation, from \eqref{beginchange} onwards, by changing the roles of $z$ and $z_k$. Hence, we first have to take the limit $\limsup_{N\rightarrow\infty}$ and subsequently $\liminf_{k\rightarrow\infty}$, and get analogously
	\begin{align*}
	\liminf_{k\rightarrow\infty}J_{\mathrm{hom}}^L(z_n)\geq\limsup\limits_{N\rightarrow\infty}J_{\mathrm{hom}}^{L,(N)}(\omega,z,A).
	\end{align*}
	Together, the two estimates yield
	\begin{align}
	\label{notyetcontinuous}
	\begin{split}
	J_{\mathrm{hom}}^L(z)&=\lim\limits_{N\rightarrow\infty}J_{\mathrm{hom}}^{L,(N)}(\omega,z,A)=\lim_{k\rightarrow\infty}J_{\mathrm{hom}}^L(z_k),\quad \text{for all}\ z\in\mathbb{R}\setminus\mathbb{Q}\ \text{and all}\ (z_k)_k\subset\mathbb{Q}.
	\end{split}
	\end{align}
	This shows that for $\omega\in\Omega'$ the limit of $J_{\mathrm{hom}}^{L,(N)}(\omega,z,A)$ exists and is independent of $\omega$ and $A$ for all $z\in\mathbb{R}\setminus\mathbb{Q}$.  Altogether, we have that the limit of $J_{\mathrm{hom}}^{L,(N)}(\omega,z,A)$ exists for every $z\in\mathbb{R}$, is independent of $\omega$ and $A$, and equals $J_{\mathrm{hom}}^L(z)$. This finally proves the assumption.
\end{proof}

\begin{proof}[Proof of Proposition~\ref{Prop:jhomlcontinuous}]

We prove the different properties separately in the next steps.

\step 1 Continuity.

	Let $(z_k)_{k\in\mathbb{N}}\subset\mathbb{R}$ be a sequence converging to $z\in\mathbb{R}$. Let $\phi_{N,z}$ be a minimizing sequence such that it holds $\phi_{N,z}^N=\phi_{N,z}^0=0$ and
	\begin{align}
	\label{beginchange2}
	\lim\limits_{N\rightarrow\infty}\dfrac{1}{N}\sum_{j=1}^{K}\sum_{i=0}^{N-j}J_j^L\left(\omega,i,z+\dfrac{\phi_{N,z}^{i+j}-\phi_{N,z}^{i}}{j}\right)=J_{\mathrm{hom}}^L(z)
	\end{align}
	for $\omega\in\Omega_0$ defined in Proposition \ref{Prop:existencejhom}.
	Then, it holds true that
	\begin{align*}
	&\dfrac{1}{N}\sum_{j=1}^{K}\sum_{i=0}^{N-j}J_j^L\left(\omega,i,z+\dfrac{\phi_{N,z}^{i+j}-\phi_{N,z}^{i}}{j}\right)=\dfrac{1}{N}\sum_{j=1}^{K}\sum_{i=0}^{N-j}J_j^L\left(\omega,i,z_k+\dfrac{\phi_{N,z}^{i+j}-\phi_{N,z}^{i}}{j}\right)\\&\quad\quad+\dfrac{1}{N}\sum_{j=1}^{K}\sum_{i=0}^{N-j}\left( J_j^L\left(\omega,i,z+\dfrac{\phi_{N,z}^{i+j}-\phi_{N,z}^{i}}{j}\right)-J_j^L\left(\omega,i,z_k+\dfrac{\phi_{N,z}^{i+j}-\phi_{N,z}^{i}}{j}\right) \right)\\
	\geq& J_{\mathrm{hom}}^{L,(N)}(\omega,z_k)-C^{L,H,(N)}(\omega)|z-z_k|^\alpha,
	\end{align*}
	where the last step is due to \eqref{lipschitzhoelderestimate} and since $|z-z_k|\leq|z-z_k|^\alpha$ for $k$ large enough. Recalling that $\sup (f-g)\geq \sup f -\sup g$, we continue by taking the limit $N\rightarrow\infty$, with $C^{L,H,(N)}(\omega)\to C^{L,H}$ from \eqref{lipschitzhoelderestimate}, and subsequently limsup $k\rightarrow\infty$. Proposition \ref{Prop:averages} provides boundedness of the sums and therefore we get
	\begin{align*}
	J_{\mathrm{hom}}^L(z)\geq\limsup_{k\rightarrow\infty} J_{\mathrm{hom}}^L(z_k).
	\end{align*}
	with the result of Proposition \ref{Prop:existencejhom}. Restarting the whole calculation, from \eqref{beginchange2} onwards, with changing roles of $z$ and $z_k$, we get analogously by by taking the limit $N\rightarrow\infty$ and subsequently $\liminf_{k\rightarrow\infty}$
	\begin{align*}
	J_{\mathrm{hom}}^L(z)\leq\liminf_{k\rightarrow\infty} J_{\mathrm{hom}}^L(z_k).
	\end{align*}
	Together, this shows $J_{\mathrm{hom}}^L(z)=\lim\limits_{k\rightarrow\infty}J_{\mathrm{hom}}^L(z_k)$ and therefore $J_{\mathrm{hom}}^L$ is continuous.\\
	
\step 2 Convexity.

	We need to show
	\begin{align*}
	J_{\mathrm{hom}}^L\left(tz_1+(1-t)z_2\right)\leq t J_{\mathrm{hom}}^L(z_1)+(1-t)J_{\mathrm{hom}}^L(z_2)
	\end{align*}
	for every $t\in[0,1]$ and every $z_1,z_2\in(0,+\infty)$. Otherwise, the inequality is trivial. Fix $t\in[0,1]$. We use in the following the notation from \ref{def:iminmax}. Let $\phi_{N,z_1}:N[0,t+\frac1N)\cap\mathbb{Z}\rightarrow\mathbb{R}$ be a minimizer related to the minimum problem of $J_{\mathrm{hom}}^{L,(N)}(\omega,z_1,[0,t))$, that is $\phi_{N,z_1}^s=0=\phi_{N,z_1}^{i_{max}^{[0,t)}+1-s}$ for $s=0,...,K-1$ and
	\begin{align*}
	J_{\mathrm{hom}}^{L,(N)}(\omega,z_1,[0,t))=\dfrac{1}{|N[0,t)\cap\mathbb{Z}|}\sum_{j=1}^{K}\sum_{i=0}^{i_{max}^{[0,t)}+1-j}J_j^L\left(\omega,i,z_1+\dfrac{\phi_{N,z_1}^{i+j}-\phi_{N,z_1}^{i}}{j}\right).
	\end{align*}
	Further, let $\phi_{N,z_2}:N[t,N]\cap\mathbb{Z}\rightarrow\mathbb{R}$ be a minimizer of the minimum problem of $J_{\mathrm{hom}}^{L,(N)}(\omega,z_2,[t,1))$, that is $\phi_{N,z_2}^{i_{min}^{[t,1)}+s}=0=\phi_{N,z_2}^{N-s}$ for $s=0,...,K-1$ and
	\begin{align*}
	J_{\mathrm{hom}}^{L,(N)}(\omega,z_2,[t,1))=\dfrac{1}{|N[t,1)\cap\mathbb{Z}|}\sum_{j=1}^{K}\sum_{i=i_{min}^{[t,1)}}^{N-j}J_j^L\left(\omega,i,z_2+\dfrac{\phi_{N,z_2}^{i+j}-\phi_{N,z_2}^{i}}{j}\right).
	\end{align*}
	This given, we define
	\begin{align*}
	\tilde{\phi}_{N}^{i}:=\begin{cases}
	\phi_{N,z_1}^{i}=0&\quad\text{for}\ 0\leq i\leq K-1,\\
	\phi_{N,z_1}^{i}+(i-K)(1-t)(z_1-z_2)&\quad\text{for}\ K \leq i\leq i_{max}^{[0,t)}+1-K,\\
	i_{max}^{[0,t)}(1-t)(z_1-z_2)&\quad\text{for}\ i_{max}^{[0,t)}+2-K \leq i\leq i_{max}^{[t,1)}+K,\\
	\phi_{N,z_2}^{i}+(N-i)(z_1-z_2)t&\quad\text{for}\ i_{min}^{[t,1)}+K \leq i\leq N-K ,\\
	\phi_{N,z_2}^{i}=0&\quad\text{for}\ N-K+1\leq i\leq N.
	\end{cases}
	\end{align*}
	Then, $\tilde{\phi}$ fulfils the constraints of the infimum problem of $J_{\mathrm{hom}}^{L,(N)}$ and therefore it holds true that
	\begin{align}
	\label{convex}
	\begin{split}
	&J_{\mathrm{hom}}^{L,(N)}\left(\omega,tz_1+(1-t)z_2\right)\leq\dfrac{1}{N}\sum_{j=1}^{K}\sum_{i=0}^{N-j}J_j^L\left(\omega,i,tz_1+(1-t)z_2+\dfrac{\tilde{\phi}_{N}^{i+j}-\tilde{\phi}_{N}^i}{j}    \right)\\
	=&\dfrac{1}{N}\sum_{j=1}^{K}\sum_{i=0}^{i_{max}^{[0,t)}+1-j}J_j^L\left(\omega,i,tz_1+(1-t)z_2+\dfrac{\tilde{\phi}_{N}^{i+j}-\tilde{\phi}_{N}^i}{j}\right)\\&+\dfrac{1}{N}\sum_{j=1}^{K}\sum_{i=i_{min}^{[t,1)}}^{N-j}J_j^L\left(\omega,i,tz_1+(1-t)z_2+\dfrac{\tilde{\phi}_{N}^{i+j}-\tilde{\phi}_{N}^i}{j}\right)\\&+\dfrac{1}{N}\sum_{j=2}^{K}\sum_{s=0}^{j-2}J_j^L\left(\omega,i_{max}^{[0,t)}-s,tz_1+(1-t)z_2+\dfrac{\tilde{\phi}_{N}^{i_{max}^{[0,t)}-s+j}-\tilde{\phi}_{N}^{i_{max}^{[0,t)}-s}}{j}\right)
	\end{split}
	\end{align}
We consider all three terms of \eqref{convex} individually and bring it together afterwards. For abbreviation, we use in the following $\overline{z}:=tz_1+(1-t)z_2$. We start with the first term of \eqref{convex} and therefore estimate
	\begin{align*}
	\begin{split}
	&\dfrac{1}{N}\sum_{j=1}^{K}\sum_{i=0}^{i_{max}^{[0,t)}+1-j}\left(J_j^L\left(\omega,i,\overline{z}+\dfrac{\tilde{\phi}_{N}^{i+j}-\tilde{\phi}_{N}^i}{j}\right)-J_j^L\left(\omega,i,z_1+\dfrac{\phi_{N,z_1}^{i+j}-\phi_{N,z_1}^i}{j}\right)\right)\\
	=&\dfrac{1}{N}\sum_{j=1}^{K}\sum_{i=0}^{K-1}\left(J_j^L\left(\omega,i,\overline{z}+\dfrac{\tilde{\phi}_{N}^{i+j}-\tilde{\phi}_{N}^i}{j}\right)-J_j^L\left(\omega,i,z_1+\dfrac{\phi_{N,z_1}^{i+j}-\phi_{N,z_1}^i}{j}\right)\right)\\&+\dfrac{1}{N}\sum_{j=1}^{K}\sum_{i=i_{max}^{[0,t)}+2-K-j}^{i_{max}^{[0,t)}+1-j}\left(J_j^L\left(\omega,i,\overline{z}+\dfrac{\tilde{\phi}_{N}^{i+j}-\tilde{\phi}_{N}^i}{j}\right)-J_j^L\left(\omega,i,z_1+\dfrac{\phi_{N,z_1}^{i+j}-\phi_{N,z_1}^i}{j}\right)\right).
	\end{split}
	\end{align*}
	By definition of $\tilde{\phi}_N$, we get boundedness of the differences
	\begin{align*}
	\left|\dfrac{\tilde{\phi}_{N}^{i+j}-\tilde{\phi}_{N}^i}{j}-\dfrac{\phi_{N,z_1}^{i+j}-\phi_{N,z_1}^i}{j}\right|\leq C
	\end{align*}
	in both cases $0\leq i\leq K-1$ and $i_{max}^{[0,t)}+2-K-j\leq i\leq i_{max}^{[0,t)}+1-j$. For $\epsilon>0$ and $I_{\epsilon}(x):=[x-\epsilon,x+\epsilon)\cap[0,1]$ for $N$ big enough, we then get by \eqref{lipschitzhoelderestimate}
\begin{align*}
\begin{split}
	&\dfrac{1}{N}\sum_{j=1}^{K}\sum_{i=0}^{i_{max}^{[0,t)}+1-j}J_j^L\left(\omega,i,\overline{z}+\dfrac{\tilde{\phi}_{N}^{i+j}-\tilde{\phi}_{N}^i}{j}\right)\\
	\leq&  \left(t+\dfrac{2}{N}\right) J_{\mathrm{hom}}^{L,(N)}(\omega,z_1,[0,t))+\dfrac{\left|NI_{\epsilon}(0)\cap\mathbb{Z}\right|}{N}\hat{C}(\omega)_+\dfrac{\left|NI_{\epsilon}(t)\cap\mathbb{Z}\right|}{N}\hat{C}(\omega).
	\end{split}
	\end{align*}
The second term of \eqref{convex} can be discussed analogously to the first one, with the analogue result
\begin{align*}
\begin{split}
	&\dfrac{1}{N}\sum_{j=1}^{K}\sum_{i=i_{min}^{[t,1)}}^{N-j}J_j^L\left(\omega,i,\overline{z}+\dfrac{\tilde{\phi}_{N}^{i+j}-\tilde{\phi}_{N}^i}{j}\right)\\
	&\leq \left(1-t+\dfrac{2}{N}\right) J_{\mathrm{hom}}^{L,(N)}(\omega,z_2,[t,1)) +\dfrac{\left|NI_{\epsilon}(t)\cap\mathbb{Z}\right|}{N}\hat{C}(\omega)+\dfrac{\left|NI_{\epsilon}(1)\cap\mathbb{Z}\right|}{N}\hat{C}(\omega).
	\end{split}
	\end{align*}
The third term of \eqref{convex} is
\begin{align*}
\dfrac{1}{N}\sum_{j=2}^{K}\sum_{s=0}^{j-2}J_j^L\left(\omega,i_{max}^{[0,t)}-s,\overline{z}+\dfrac{\tilde{\phi}_{N}^{i_{max}^{[0,t)}-s+j}-\tilde{\phi}_{N}^{i_{max}^{[0,t)}-s}}{j}\right).
\end{align*}
For the given values of $s$ and $j$, it holds true that $\tilde{\phi}_N^{i+j}-\tilde{\phi}_N^{i}=0$ because it is $i_{max}^{[0,t)}+2-K\leq i\leq i_{max}^{[0,t)}+K$. Therefore, we can estimate
\begin{align*}
\begin{split}
&\dfrac{1}{N}\sum_{j=2}^{K}\sum_{s=0}^{j-2}J_j^L\left(\omega,i_{max}^{[0,t)}-s,\overline{z}+\dfrac{\tilde{\phi}_{N}^{i_{max}^{[0,t)}-s+j}-\tilde{\phi}_{N}^{i_{max}^{[0,t)}-s}}{j}\right)\\
\leq& \dfrac{1}{N}\sum_{j=2}^{K}\sum_{s=0}^{j-2}d\max\left\{\Psi\left(\overline{z}\right),\left|\overline{z} \right| \right\}\leq \dfrac{1}{N}\dfrac{1}{2}(K+1)KC\to0\quad\text{for}\ N\to\infty.
\end{split}
\end{align*}
Putting together all previous estimates, we can calculate the limit $N\to\infty$ in \eqref{convex} and get with the convergence of the constant $\hat{C}(\omega)\to\hat{C}$ from \eqref{lipschitzhoelderestimate}
\begin{align*}
	&J_{\mathrm{hom}}^L\left(tz_1+(1-t)z_2\right)\leq tJ_{\mathrm{hom}}^L(z_1)+\epsilon \hat{C}+2\epsilon\hat{C} +(1-t)J_{\mathrm{hom}}^L(z_2)+ 2\epsilon\hat{C}+\epsilon\hat{C}+0,
	\end{align*}
	where Proposition \ref{Prop:existencejhom} yields the existence of $\Omega_0\subset\Omega$ with $\mathbb{P}(\Omega_0)=1$ such that the above calculated limit exists for all $\omega\in\Omega_0$ and all $z_1,z_2\in\mathbb{R}$. Finally, we can perform the limit $\epsilon\to0$ and get
	\begin{align*}
	J_{\mathrm{hom}}^L\left(tz_1+(1-t)z_2\right)\leq t J_{\mathrm{hom}}^L(z_1)+(1-t)J_{\mathrm{hom}}^L(z_2),
	\end{align*}
	which shows convexity.

\step 3 $\Gamma$-limit.	
	
	We first show the liminf-inequality.	Let $(z_N)_{N\in\mathbb{N}}$ be a sequence converging to $z$. Then, for every $N\in\mathbb{N}$ we denote a minimizer related to the minimum problem of $J_{\mathrm{hom}}^{L,(N)}(\omega,z_N,A)$ by $\phi_{N,z_N}:(\mathbb{Z}\cap NA)\rightarrow\mathbb{R}$, that is
	\begin{align*}
	&\dfrac{1}{|NA\cap\mathbb{Z}|}\sum_{j=1}^{K}\sum_{\substack{i\in\mathbb{Z}\cap NA\\ i+j-1\in NA}}J_j^L\left(\omega,i,z_N+\dfrac{\phi_{N,z_N}^{i+j}-\phi_{N,z_N}^{i}}{j}\right)=J_{\mathrm{hom}}^{L,(N)}(\omega,z_N,A).
	\end{align*}
	Now, we have
	\begin{align*}
	&J_{\mathrm{hom}}^{L,(N)}(\omega,z_N,A)=\dfrac{1}{|NA\cap\mathbb{Z}|}\sum_{j=1}^{K}\sum_{\substack{i\in\mathbb{Z}\cap NA\\ i+j-1\in NA}}J_j^L\left(\omega,i,z_N+\dfrac{\phi_{N,z_N}^{i+j}-\phi_{N,z_N}^{i}}{j}\right)\\
	=&\dfrac{1}{|NA\cap\mathbb{Z}|}\sum_{j=1}^{K}\sum_{\substack{i\in\mathbb{Z}\cap NA\\ i+j-1\in NA}}J_j^L\left(\omega,i,z+\dfrac{\phi_{N,z_N}^{i+j}-\phi_{N,z_N}^{i}}{j}\right)\\&+\dfrac{1}{|NA\cap\mathbb{Z}|}\sum_{j=1}^{K}\sum_{\substack{i\in\mathbb{Z}\cap NA\\ i+j-1\in NA}}\left( J_j^L\left(\omega,i,z_N+\dfrac{\phi_{N,z_N}^{i+j}-\phi_{N,z_N}^{i}}{j}\right)-J_j^L\left(\omega,i,z+\dfrac{\phi_{N,z_N}^{i+j}-\phi_{N,z_N}^{i}}{j}\right) \right)\\
	\geq& J_{\mathrm{hom}}^{L,(N)}(\omega,z,A)-C^{L,H,(N)}(\omega)|z-z_N|^\alpha,
	\end{align*}
	where the last step is due to \eqref{lipschitzhoelderestimate} and since $|z-z_k|\leq|z-z_k|^\alpha$ for $k$ large enough. With Proposition \ref{Prop:existencejhom} and \eqref{lipschitzhoelderestimate}, we get for $\omega\in\Omega_0$, by taking the limit $\liminf_{N\rightarrow\infty}$,
	\begin{align*}
	\liminf_{N\rightarrow\infty}J_{\mathrm{hom}}^{L,(N)}(\omega,z_N,A)\geq\liminf_{N\rightarrow\infty}J_{\mathrm{hom}}^{L,(N)}(\omega,z,A)=J_{\mathrm{hom}}^L(z),
	\end{align*}
	which shows the liminf-inequality.\\
	
	The limsup-inequality is trivial, since we can take for every $z\in\mathbb{R}$ the constant recovery sequence $z_N:=z$ and get
	\begin{align*}
	    \limsup_{N\to\infty}J_{\mathrm{hom}}^{L,(N)}(\omega,z_N,A)=\limsup_{N\to\infty}J_{\mathrm{hom}}^{L,(N)}(\omega,z,A)=J_{\mathrm{hom}}^{L}(z),
	\end{align*}
	due to Proposition \ref{Prop:existencejhom}. This shows the limsup-inequality and completes the proof of the $\Gamma$-limit.
\end{proof}

\subsection{Properties of $J_{\rm hom}$, proofs of Proposition~\ref{Prop:Jhom} and Proposition~\ref{Prop:Jhomzklein}}
\label{sec:propertiesJhom}
\begin{proof}[Proof of Proposition~\ref{Prop:Jhom}]
We prove the different assumptions separately in the following steps.

\step 1 Equation \eqref{eq:defJhom}

In Proposition \ref{Prop:Jhomzklein} we have shown the pointwise convergence of $J_{\mathrm{hom}}^{(N)}(\cdot,z,A)$ almost everywhere on $\Omega$ to a function $J_{\mathrm{hom}}(z)$ independent of $\omega$ and $A$. The upper bound from (LJ2) together with the dominated convergence theorem then yields \eqref{lim:JhomLN}.

\step 2 Convexity.

The pointwise limit of convex functions is convex. Hence, convexity of $J_{\rm hom}$ follows from Proposition~\ref{Prop:jhomlcontinuous} and Proposition~\ref{Prop:Jhomzklein}.

\step 3 Superlinear growth at $-\infty$, proof of \eqref{superlinwachstum}.

From the condition (LJ2) we have
\begin{align*}
&J_{\mathrm{hom}}^{(N)}(\omega,z,[0,1))=\inf_{\phi\in\mathcal A_{N,K}^0([0,1))}\left\{\dfrac{1}{N}\sum_{j=1}^{K}\sum_{i=0}^{N-j}J_j\left(\omega,i,z+\frac{\phi^{i+j}-\phi^i}j\right) \right\}\\
	\geq& \frac{1}{d}\inf_{\phi\in\mathcal A_{N,K}^0([0,1))}\left\{\sum_{j=1}^{K}\dfrac{1}{N}\sum_{i=0}^{N-j}\Psi\left(z+\frac{\phi^{i+j}-\phi^i}j\right) \right\}-Kd
	\geq \frac{1}{d}\sum_{j=1}^{K}\dfrac{N-j+1}{N}\Psi\left(z \right)-Kd,
\end{align*}
where we used in the last estimate Jensen's inequality and $\phi\in\mathcal A_{N,K}^0([0,1))$. Taking the limit $N\to\infty$, we obtain by Proposition \ref{Prop:Jhomzklein}
\begin{align}\label{hilfsgleichung}
J_{\mathrm{hom}}(z)=\lim\limits_{N\rightarrow\infty}	J_{\mathrm{hom}}^{(N)}(\omega,z)\geq \frac{1}{d}K\Psi(z)-Kd.
\end{align}
Clearly, \eqref{growthpsi} and \eqref{hilfsgleichung} imply \eqref{superlinwachstum}.

\step 4 Lower semicontinuity.

Due to convexity, $J_{\mathrm{hom}}(z)$ is continuous in its inner points, i.e. on $(0,+\infty)$. Further, we get from \eqref{growthpsi}
\begin{align}
\label{lowersemicont}
\lim\limits_{z\to 0^+}J_{\mathrm{hom}}(z)\stackrel{\eqref{hilfsgleichung}}{\geq}\lim\limits_{z\to 0^+}\left(\frac{1}{d} K\Psi(z)-Kd \right)\stackrel{\eqref{growthpsi}}{=}\infty.
\end{align}
This shows lower semicontinuity.

\step 5 Monotonicity.

First of all, $J_{\mathrm{hom}}$ is bounded from below, which can be seen by \eqref{hilfsgleichung} and from the fact that $\Psi(z)\geq 0$ for all $z\in\mathbb{R}$ by definition.

 From \eqref{lowersemicont} and together with convexity, (i) $J_{\mathrm{hom}}$ is either decreasing with $\lim\limits_{z\to+\infty}J_{\mathrm{hom}}(z)=C$ with $C\in\mathbb{R}$, or (ii) $J_{\mathrm{hom}}$ has a unique minimum. In the first case (i), it directly follows that $J_{\mathrm{hom}}$ is monotonically decreasing. The second case (ii) has to be considered separately.

Consider the case that $J_{\mathrm{hom}}$ has a unique minimum, which we call $J_{\mathrm{hom}}(\gamma)$, at the minimizer $z=\gamma$. To show the assertion that $J_{\mathrm{hom}}$ is monotonically decreasing, we need to show $J_{\mathrm{hom}}(z)=J_{\mathrm{hom}}(\gamma)$ for every $z>\gamma$. In fact, it is sufficient to show $J_{\mathrm{hom}}(z)\leq J_{\mathrm{hom}}(\gamma)$, because the reverse inequality is clear since $J_{\mathrm{hom}}(\gamma)$ is the unique minimizer.

For this, consider $z>\gamma$. Let $z_N:\{0,...,N-1\}\to\mathbb{R}$ be a minimizer related to the minimum problem of $J_{\mathrm{hom}}^{(N)}(\omega,\gamma)$, that is $z_N^s=z_N^{N-s-1}=\gamma$ for $s=0,...,K-2$, $\sum_{i=0}^{N-1}z_N^i=N\gamma$ and
\begin{align*}
J_{\mathrm{hom}}^{(N)}(\omega,\gamma)=\dfrac{1}{N}\sum_{j=1}^{K}\sum_{i=0}^{N-j}J_j\left(\omega,i,\frac{1}{j}\sum_{k=i}^{i+j-1}z^k \right).
\end{align*}
We set
	\begin{align*}
	\tilde{z}_N^i=\begin{cases}
	z&\quad\text{for}\ i=0,...,K-2\ \text{and}\ i=N-K+1,...,N-1,\\
	(z-\gamma)(N/2-K+1)+z_N^{K-1}&\quad\text{for}\ i=K-1,\\
	(z-\gamma)(N/2-K+1)+z_N^{K-1}&\quad\text{for}\ i=N-K,\\
	z_N^i&\quad\text{otherwise.}
	\end{cases}
	\end{align*}
	which fulfils the constraint $\sum_{i=0}^{N-1}\tilde{z}_N^i=Nz$ and $\tilde{z}_N^s=\tilde{z}_N^{N-s-1}=z$ for $s=0,...,K-2$. Then, it holds true that
	\begin{align}
	\label{monotonicallydecreasingequation}
	\begin{split}
	&J_{\mathrm{hom}}^{(N)}(\omega,z)\leq\dfrac{1}{N}\sum_{j=1}^{K}\sum_{i=0}^{N-j}J_j\left(\omega,i,\frac{1}{j}\sum_{k=i}^{i+j-1}\tilde{z}_N^k \right)\\
	=&J_{\mathrm{hom}}^{(N)}(\omega,\gamma)+\dfrac{1}{N}\sum_{j=1}^{K}\sum_{\substack{i\in\{0,...,K-1\}\cup\\\{N-K+1,...,N-1\}}}\left(J_j\left(\omega,i,\frac{1}{j}\sum_{k=i}^{i+j-1}\tilde{z}_N^k \right)-J_j\left(\omega,i,\frac{1}{j}\sum_{k=i}^{i+j-1}z_N^k \right)\right).
	\end{split}
	\end{align}
	We now argue that the remainder converges to $0$ for $N\to\infty$. The second part of the sum can be easily estimated by $-J_j(\omega,i, \frac{1}{j}\sum_{k=i}^{i+j-1}z_N^k)\leq d$, due to (LJ2). Since each sum contains at most $K$ elements, the prefactor $\frac{1}{N}$ shows the convergence to zero.
	
	The first part of the sum needs a finer argument. Due to $\sup_N J_{\mathrm{hom}}^{(N)}(\omega,\gamma)<\infty$, we have $z_N^i>0$ for every $i=0,...,N-1$. 
		With this, we consider the first part of the sum $J_j(\omega,i,\frac{1}{j}\sum_{k=i}^{i+j-1}\tilde{z}_N^k)$. Now it holds true that $\tilde{z}_N^k=z$ for $i\leq K-2$ and $i\geq N-K+1$, and $\tilde{z}_N^k\geq(z-\gamma)(N/2-K+1)$ for $i=K-1$ and $i=N-K$, and $\tilde{z}_N^k\geq 0$ otherwise. Therefore, we get $\sum_{k=i}^{i+j-1}\tilde{z}_N^k\geq z$ for $N$ large enough. Therefore, $J_j(\omega,i,\frac{1}{j}\sum_{k=i}^{i+j-1}\tilde{z}_N^k)$ is bounded, due to \eqref{LJ2abschaetzung} from (LJ2). Since both sums contain at most $K$ elements, the prefactor $\frac{1}{N}$ yields the convergence to $0$.
	
	Since the remainders in \eqref{monotonicallydecreasingequation} vanish for $N\to\infty$, we get, with Proposition \ref{Prop:Jhomzklein},
			\begin{align*}
				J_{\mathrm{hom}}(z)=\lim_{N\rightarrow\infty}J_{\mathrm{hom}}^{(N)}(\omega,z)\leq \lim\limits_{N\to\infty}J_{\mathrm{hom}}^{(N)}(\omega,\gamma)=  J_{\mathrm{hom}}(\gamma),
			\end{align*}
	which is the desired result and finally shows that $J_{\mathrm{hom}}(z)=J_{\mathrm{hom}}(\gamma)$ for all $z\geq \gamma$. Together with \eqref{lowersemicont}, this shows that $J_{\mathrm{hom}}$ is monotonically decreasing.

	\step 6 $\Gamma$-limit, proof of \eqref{eq:gammalimjhom}.
	
	For $z\in\mathbb{R}$, let $(z_N)$ be a sequence with $z_N\to z$. Then, the definition of the approximation and Proposition \ref{Prop:jhomlcontinuous} yield
	\begin{align*}
	    \liminf_{N\to\infty}J_{\mathrm{hom}}^{(N)}(\omega,z_N,A)\geq \liminf_{N\to\infty}J_{\mathrm{hom}}^{L,(N)}(\omega,z_N,A)\geq J_{\mathrm{hom}}^{L}(z).
	\end{align*}
	Further, taking the limit $L\to\infty$ we get with Proposition \ref{Prop:Jhomzklein} $\liminf_{N\to\infty}J_{\mathrm{hom}}^{(N)}(\omega,z_N,A)\geq  J_{\mathrm{hom}}(z)$, which proves the liminf-inequality.
	
	For $z\in\mathbb{Z}$, take the constant recovery sequence $(z_N)$ with $z_N:=z$. Then it holds true that
	\begin{align*}
	    \limsup_{N\to\infty}J_{\mathrm{hom}}^{(N)}(\omega,z_N,A)=\limsup_{N\to\infty}J_{\mathrm{hom}}^{(N)}(\omega,z,A)=J_{\mathrm{hom}}(z),
	\end{align*}
	which proves the limsup-inequality and completes the proof of the $\Gamma$-limit.
\end{proof}

\begin{proof}[Proof of Proposition~\ref{Prop:Jhomzklein}] 	
For $z\notin (0,+\infty)$, we have $\lim\limits_{N\rightarrow\infty}J_{\mathrm{hom}}^{(N)}(\omega,z,A)=\infty$ and $\lim\limits_{L\rightarrow\infty}J_{\mathrm{hom}}^L(z)=\infty$, because of (LJ1) and the definition of the approximation. Hence, the assertion is proven in this case. 
	
Fix $\omega\in\Omega_0$, $z\in(0,+\infty)$ and $A=[a,b)$, $a,b\in\mathbb R$. The definition of the approximation $J_j^L$ yields $J_{\mathrm{hom}}^{L,(N)}(\omega,z,A)\leq J_{\mathrm{hom}}^{(N)}(\omega,z,A)$ and thus by Proposition~\ref{Prop:existencejhom}
\begin{align}\label{Leins}
 J_{\mathrm{hom}}^L(z)\leq \liminf_{N\rightarrow\infty}J_{\mathrm{hom}}^{(N)}(\omega,z,A).
\end{align}
In Lemma~\ref{Lem:limsup} below, we show for every $z\in\mathbb R$
\begin{align}\label{Lzwei}
 \liminf_{L\to\infty}J_{\mathrm{hom}}^L(z)\geq \limsup_{N\rightarrow\infty}J_{\mathrm{hom}}^{(N)}(\omega,z,A).
\end{align}
The inequalities \eqref{Leins} and \eqref{Lzwei} yield
\begin{align*}
 \lim_{L\to\infty}J_{\mathrm{hom}}^L(z)=\lim\limits_{N\rightarrow\infty}J_{\mathrm{hom}}^{(N)}(\omega,z,A),
 \end{align*}
 which proves the proposition.
\end{proof}
The following lemma contains the still remaining proof of the limit \eqref{Lzwei}.
\begin{lemma}
\label{Lem:limsup}
Assume that Assumption \ref{Ass:stochasticLJ} is satisfied. For every $z\in(0,+\infty)$ and $\omega\in\Omega_0$ it holds
\begin{align}\label{Lziel}
\liminf_{L\to\infty}J_{\mathrm{hom}}^L(z)\geq \limsup_{N\rightarrow\infty}J_{\mathrm{hom}}^{(N)}(\omega,z,A).
\end{align}
\end{lemma}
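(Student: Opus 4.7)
The plan is to convert a near-minimizer of the approximated cell problem $J_{\mathrm{hom}}^{L,(N)}$ into an admissible competitor for the unapproximated cell problem $J_{\mathrm{hom}}^{(N)}$, with an energy excess that vanishes as $L\to\infty$ uniformly in $N$. For fixed $\omega\in\Omega_0$, $z>0$ and $A=[a,b)$, I would pick for each $L,N$ an almost-minimizer $\phi=\phi_{L,N}\in\mathcal A_{N,K}^0(A)$ of $J_{\mathrm{hom}}^{L,(N)}(\omega,z,A)$ and then modify it to $\tilde\phi\in\mathcal A_{N,K}^0(A)$ whose discrete $j$-slope arguments all satisfy $z+(\tilde\phi^{i+j}-\tilde\phi^i)/j\ge z_L$, so that $J_j=J_j^L$ on $\tilde\phi$ and $\tilde\phi$ is admissible in the definition of $J_{\mathrm{hom}}^{(N)}(\omega,z,A)$.

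The first ingredient is the smallness of the ``bad set''. Writing $d_i:=\phi^{i+1}-\phi^i$ and using $m_j^L\le -M_L$ from Proposition~\ref{prop:ML}(i), one has $J_1^L(\omega,i,y)\ge M_L(z_L-y)-d$ for $y\le z_L$; combining with the uniform lower bound $J_j^L\ge -d$ from \eqref{LJschrankecutoff} and the energy bound gives, for every fixed $\eta>0$,
\begin{equation*}
\frac{|\{i\in NA\cap\mathbb Z:\,d_i\le z_L-z-\eta\}|}{|NA\cap\mathbb Z|}\le \frac{C}{M_L\,\eta}\xrightarrow[L\to\infty]{}0
\end{equation*}
uniformly in $N$, since $M_L\to\infty$ by Proposition~\ref{prop:ML}(i). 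Hence outside a set $B_L^N$ of relative size $o_L(1)$ every $j$-slope average of $\phi$ already lies in the regime where $J_j^L=J_j$.

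Next $\tilde\phi$ is built by clustering $B_L^N$ into maximal consecutive runs, buffering each by $K$ indices on either side, and on each buffered cluster $[p,q]$ replacing $\phi$ by a piecewise-affine profile having slope $z$ on the interior and a single large positive slope near one endpoint that absorbs the mismatch between $\phi^{p-1}$ and $\phi^{q+1}$ (this jump has bounded $J_j$-cost since $\|J_j\|_{L^\infty(\delta,\infty)}<b$ by (LJ2)). The buffered cluster set has total size $O(K\,|B_L^N|)=o_L(1)\cdot|NA\cap\mathbb Z|$, so by the uniform H\"older--Lipschitz estimate \eqref{lipschitzhoelderestimate} of Proposition~\ref{prop:ML}(ii) combined with Proposition~\ref{Prop:averages}, the resulting energy change is $o_L(1)$, uniformly in $N$. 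This gives $J_{\mathrm{hom}}^{(N)}(\omega,z,A)\le J_{\mathrm{hom}}^{L,(N)}(\omega,z,A)+o_L(1)+o_N(1)$, and sending first $N\to\infty$ via Proposition~\ref{Prop:existencejhom} and then $L\to\infty$ yields~\eqref{Lziel}.

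The principal obstacle is this cluster-replacement construction: the signed drop of $\phi$ across a buffered cluster can in principle be negative, so naive linear interpolation between $\phi^{p-1}$ and $\phi^{q+1}$ would produce negative slopes and an infinite $J_j$-contribution. One must therefore exploit the quantitative fact that the total slope deficit inside each cluster is controlled by $|B_L^N|/M_L$ in order to guarantee that the absorbing jump can be made positive, and verify simultaneously for every $j\in\{1,\dots,K\}$ that the corresponding slope average lies in the good regime right of $\delta$ where \eqref{LJ2abschaetzung} applies. Reconciling this local modification with the zero-boundary constraint $\tilde\phi\in\mathcal A_{N,K}^0(A)$, uniformly in $N$, is the technical heart of the argument.
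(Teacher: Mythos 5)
The two arguments share the same germ: use Proposition~\ref{prop:ML}(i) ($m_j^L\le -M_L$, $M_L\to\infty$) to show the ``bad set'' of slopes below $z_L$ is controlled, then modify an almost-minimizer of $J_{\mathrm{hom}}^{L,(N)}$ into a competitor for $J_{\mathrm{hom}}^{(N)}$. But your local construction has a gap that the paper deliberately avoids by a \emph{global} redistribution, and the step you flag as ``the technical heart'' in fact cannot be closed as stated.

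First, the set you call $B_L^N$ is $\{i: d_i\le z_L-z-\eta\}$, which is strictly smaller than the full bad set $\{i: z+d_i< z_L\}$; the annulus $\{z_L-\eta< z+d_i< z_L\}$ is uncontrolled by your fixed-$\eta$ density estimate. So ``outside $B_L^N$ every $j$-slope average lies where $J_j^L=J_j$'' is false for $\eta>0$, and sending $\eta\to0$ destroys the uniform $O(1/(M_L\eta))$ bound. The paper sidesteps this by proving (Step~1) the stronger $\ell^1$-type statement $\lim_L\lim_N\tfrac1N\sum_{i\in I_{L,N}}(z_L-\tilde z_{L,N}^i)=0$ and then partitioning into \emph{small}- and \emph{big-shift} index sets, treating the small shifts via the averaged H\"older bound \eqref{lipschitzhoelderestimate} and the big shifts via a vanishing-density argument.

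Second, and more seriously, the cluster-replacement cannot work. Boundedness of the energy together with $m_1^L\le-M_L$ only forces the \emph{total} deficit $\sum_{i\in I_{L,N}}(z_L-(z+d_i))$ to be $O(N/M_L)$; nothing prevents a single bad index $i_0$ from carrying a deficit of order $N/M_L$ all by itself. Your buffered cluster around $i_0$ then has net drop $\phi^{q+1}-\phi^{p-1}\approx -D$ with $D$ of order $N/M_L$, far exceeding the $O(K)\,z$ rise your interior slopes can supply, so the ``absorbing jump'' would have to be hugely negative — exactly the failure you identified. There is no local fix: the positive slack needed to absorb the deficit is spread across the whole interval, not concentrated near the bad indices. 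This is precisely why the paper's proof redistributes globally: it exploits the constraint $\sum_i \tilde z_{L,N}^i=Nz$ to select, among \emph{all} indices with $\tilde z_{L,N}^i>z$, nonnegative compensations $v_N^i$ with $\sum_i v_N^i=\sum_{I_{L,N}}(z_L-\tilde z_{L,N}^i)$, then sets $\hat z_{L,N}^i=z_L$ on $I_{L,N}$ and $\hat z_{L,N}^i=\tilde z_{L,N}^i-v_N^i$ elsewhere. This keeps $\hat z_{L,N}^i\ge z_L$ everywhere, preserves the boundary/average constraints, and after the three-case analysis of $\mathrm{Diff}_{i,j}^{L,N}$ (using convexity on $(0,\delta)$, the constants $b,d$, $\hat\rho$, and \eqref{lipschitzhoelderestimate}) yields \eqref{est:Ljjls2}. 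You should replace your local cluster surgery with this kind of global redistribution argument; without it, the lemma is not proved.
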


\begin{proof}
For simplicity, we consider $A=[0,1)$, the proof for a general interval is essentially the same. First, note that the assumption $z\in(0,+\infty)$ implies finite values of the energy. To show \eqref{Lziel}, we start for a given $z$ with a minimizer related to the minimum problem of $J_{\mathrm{hom}}^{L,(N)}(\omega,z)$, which we call $\tilde{z}_{L,N}=(\tilde{z}_{L,N}^0,...,\tilde{z}_{L,N}^{N-1})$, that is
\begin{align*}
J_{\mathrm{hom}}^{L,(N)}(\omega,z,[0,1))=\dfrac{1}{N}\sum_{j=1}^{K}\sum_{i=0}^{N-j}J_j^L\left(\omega,i,\frac{1}{j}\sum_{k=i}^{i+j-1}\tilde{z}_{L,N}^{k}\right).
\end{align*}
For $L,N\in\mathbb N$, we define
\begin{align*}
 I_{L,N}:=\{i\,|\, \tilde{z}_{L,N}^{i}<z_L \},
\end{align*}
which is the set of all indices $i$ with $\tilde{z}_{L,N}^{i}$ in the region where $J_j$ and $J_j^L$ differ. 

\step 1 We assert that
\begin{equation}\label{lim:zlzniC}
    \lim_{L\to\infty}\lim_{N\to\infty}\frac1N\sum_{i\in I_{L,N}}\left(z_L-\tilde{z}_{L,N}^{i}\right)=0.
\end{equation}
By definition of $I_{L,N}$ every term in the sum in \eqref{lim:zlzniC} is non-negative. Suppose that for some $\epsilon>0$ it holds 
\begin{equation}\label{lim:LNznieps}
    \limsup_{L\to\infty}\limsup_{N\to\infty}\frac1N\sum_{i\in I_{L,N}}\left(z_L-\tilde{z}_{L,N}^{i}\right)\geq\epsilon.
\end{equation}
Using Proposition~\ref{Prop:existencejhom} and (LJ2), we obtain 
\begin{align*}
    &J_{\mathrm{ hom}}^L(z)=\lim_{N\to\infty}\frac1N\sum_{j=1}^K\sum_{i=0}^{N-j}J_j\left(\tau_i\omega,\frac1j\sum_{k=i}^{i+j-1}\tilde z_{L,N}^i\right)\\
    \geq&-Kd+\limsup_{N\to\infty}\frac1N\sum_{i\in I_{L,N}}m_1^L(\tau_i\omega)(\tilde z_{L,N}^i-z_L)
    \geq-Kd+M^L\limsup_{N\to\infty}\frac1N\sum_{i\in I_{L,N}}(z_L-\tilde z_{L,N}^i),
\end{align*}
where the last inequality is due to Proposition \ref{prop:ML} (i). Hence, a combination of \eqref{schrankeableitungwachstum} and the assumption in \eqref{lim:LNznieps} yields
\begin{equation*}
    \limsup_{L\to\infty}J_{\mathrm{ hom}}^L(z)=\infty.
\end{equation*}
This is absurd in view of the estimate
\begin{equation*}
 J_{\mathrm{ hom}}^L(z)\leq Kd\max\{\Psi(z),|z|\}<\infty
\end{equation*}
being valid for every $L\in\mathbb N$, and thus the claim is proven.

\step 2 Conclusion

We provide a new sequence of competitors $(\hat z_{L,N})$ for the minimization problem in $J_{\mathrm{ hom}}^{L,(N)}(\omega,z)$ satisfying $\hat z_{L,N}^i\geq z_L$ for all $i\in\{0,\dots,N-1\}$ and
\begin{equation}\label{est:Ljjls2}
\lim_{L\to\infty}\lim_{N\to\infty}\frac{1}{N} \sum_{j=1}^K\sum_{i=0}^{N-j}\left(J_j^L\left(\tau_i\omega,\frac1j\sum_{k=i}^{i+j-1}\tilde z_{L,N}^k\right)-J_j^L\left(\tau_i\omega,\frac1j\sum_{k=i}^{i+j-1}\hat z_{L,N}^k\right)\right)\geq0.
\end{equation}
Obviously \eqref{est:Ljjls2} and $\hat z_{L,N}^i\geq z_L$ for all $i\in\{0,\dots,N-1\}$ imply the claim \eqref{Lziel}. Since $z_L\to 0$ for $L\to\infty$, it holds true that $z_L<z$ for $L$ big enough. 

In what follows we suppose that there exists $i\in\{0,\dots,N-1\}$ such that $\hat z_{L,N}^i< z_L$ (the other case is trivial). The constraint $\sum_{i=0}^{N-1}(\tilde{z}_N^i-z)=0$, implies $I_{L,N,z}:=\{i\,|\,z<\tilde{z}_{L,N}^{i}\}\neq\emptyset$  and we obtain
\begin{align}\label{setbiggerz}
\begin{split}
		 0=\sum_{i=0}^{N-1}(\tilde{z}_{L,N}^{i}-z)&\leq\sum_{i\in I_{L,N,z}}(\tilde{z}_{L,N}^{i}-z)+\sum_{i\in I_{L,N}}(\tilde{z}_{L,N}^{i}-z).
\end{split}
\end{align}
Combining \eqref{setbiggerz} and the assumption $z_L<z$, we find $v_N^i$ for $i=0,...,N-1$ with $0\leq v_N^i\leq\max\{\tilde{z}_N^i-z,0\}$ and
\begin{align}\label{etadef}
	\sum_{i\in I_{L,N}}\left(z_L-\tilde{z}_{L,N}^{i}\right)=\sum_{i=0}^{N-1}v_N^i.
\end{align}
Notice that by construction $v_N^i=0$ whenever $i\notin I_{L,N,z}$. Next, we define $\hat{z}_{L,N}$ by
\begin{align*}
 \hat{z}_{L,N}^{i}=\begin{cases}
	z_L&\quad\text{for}\ i\in I_{L,N},\\
	\tilde{z}_{L,N}^{i}-v_N^i&\quad\text{for}\ i\notin I_{L,N}.
\end{cases}
\end{align*}
By definition it holds $\hat z_{L,N}^i\geq z_L$ for every $i$ and $\hat z_{L,N}$ is a competitor for the minimization problem in the definition of $J_{\hom}^{L,(N)}$. Indeed, $\hat z_{L,N}^i=\tilde z_{L,N}^i=z$ for all $i\in\{0,\dots,K-1\}\cup\{N-K+1,\dots,N-1\}$ and
\begin{align*}
	&\sum_{i=0}^{N-1}\hat{z}_{L,N}^i=\sum_{i\in I_L}z_L+\sum_{i\notin I_L}(\tilde{z}_{L,N}^i-v_N^i)=\sum_{i=0}^{N-1}\tilde z_{L,N}^i+\sum_{i\in I_L}(z_L-\tilde z_{L,N}^i)-\sum_{i=0}^{N-1}v_N^i\stackrel{\eqref{etadef}}{=} \sum_{i=0}^{N-1}\tilde z_{L,N}^i.
\end{align*}
Fix $\hat\rho=\hat\rho(b,d,\Psi)\in(0,\frac1d]$ such that
\begin{equation}\label{def:hatrho}
 \frac1d\Psi(z)-d\geq b\quad\mbox{for all $z\leq \hat \rho$,}
\end{equation}
where $b,d$ and $\Psi$ are the constants and the convex function from the definition of $\mathcal J$. Further, we define $\rho_z:=\min\{\frac{z}K, \tfrac1{d},\hat\rho\}$. We consider for all $L$ sufficiently large such that $z_L<\rho_z$ the expression
\begin{equation*}
{\mathrm{ Diff}}_{i,j}^{L,N}:=J_j^L\left(\tau_i\omega,\frac1j\sum_{k=i}^{i+j-1}\tilde z_{L,N}^k\right)-J_j^L\left(\tau_i\omega,\frac1j\sum_{k=i}^{i+j-1}\hat z_{L,N}^k\right).
\end{equation*}
To show \eqref{est:Ljjls2}, we distinguish three cases:
\begin{itemize}
 \item \textit{Case (i):} $\frac1j\sum_{k=i}^{i+j-1}\tilde z_{L,N}^k\leq \frac1j\sum_{k=i}^{i+j-1}\hat z_{L,N}^k\leq \delta_j(\tau_i\omega)$. Since $J_j^L(\tau_i\omega,\cdot)$ is monotone decreasing on $(0,\delta_j(\tau_i\omega)]$ (see (LJ2)) it follows ${\mathrm{ Diff}}_{ij}^{L,N}\geq0$.
 \item \textit{Case (ii):} $\frac1j\sum_{k=i}^{i+j-1}\hat z_{L,N}^k\geq \delta_j(\tau_i\omega)$. It is ${\mathrm{ Diff}}_{i,j}^{L,N}\geq\frac{1}{d}\Psi\left( \frac1j\sum_{k=i}^{i+j-1}\tilde z_{L,N}^k\right)-d-b$. By the definition of $\hat \rho$ (see \eqref{def:hatrho}), we have either ${\mathrm{ Diff}}_{ij}^{L,N}\geq0$ or $\frac1j\sum_{k=i}^{i+j-1}\tilde z_{L,N}^k\geq \hat\rho\geq z_L$.
 \item \textit{Case (iii):}  $\frac1j\sum_{k=i}^{i+j-1}\hat z_{L,N}^k\leq\delta_j(\tau_i\omega)$ and $\frac1j\sum_{k=i}^{i+j-1}\hat z_{L,N}^k\leq \frac1j\sum_{k=i}^{i+j-1}\tilde z_{L,N}^k$. By the definition of $\hat z$ there exists $\hat k\in \{i,\dots,i+j-1\}$ such that $\tilde z_{L,N}^{\hat k}\geq z$ and thus $\frac1j\sum_{k=i}^{i+j-1}\hat z_{L,N}^k\geq \frac1Kz$, since $\hat z_{L,N}^k\geq0$ due to the finite value of the energy.
 \end{itemize}
Those indices $i$ where ${\mathrm{ Diff}}_{ij}^{L,N}\geq0$ holds true, do not pose a problem regarding the proof of \eqref{est:Ljjls2}. In order to conclude the proof of \eqref{est:Ljjls2}, we have to further consider \textit{Case (iii)} and the part of \textit{Case (ii)} where $\frac1j\sum_{k=i}^{i+j-1}\tilde z_{L,N}^k\geq \hat\rho$. For short, we name the set of those remaining indices $I_{\mathrm{rem}}$. For this, we need a finer estimation and define sets of \textit{small} and \textit{big} shifts. Let $\mu>0$, then it is
	 	\begin{align}
	 	\label{newsets}
	 	\begin{split}
	 	I_{L,N;j}&:=\left\{i\in\{0,...,N-j\}\;:\;\{i,...,i+j-1\}\cap I_L\neq\emptyset \right\},\\
	 	I_{L,N;j}^s&:=\{i\in \{0,...,N-j\}\setminus I_{L,N;j}\ |\ v_N^k< \mu\ \text{for all}\ k=i,...,i+j-1 \},\\
	 	I_{L,N;j}^b&:=\{0,...,N-j\}\setminus \left(I_{L,N;j}\cup I_{L,N;j}^s\right),\\
	 	\tilde{I}_{L,N;j}^s&:=\{i\in I_{L,N;j}\ |\  |\tilde{z}_N^k-\hat{z}_N^k|<\mu\ \text{for all}\ k=i,...,i+j-1 \},\\
	 	\tilde{I}_{L,N;j}^b&:=I_{L,N;j}\setminus \tilde{I}_{L,N;j}^s,
	 	\end{split}
	 	\end{align}
	 	We claim that for every $\mu>0$ and for every $j=1,...,K$ it holds true that
	 	\begin{align}\label{claim}
	 	\lim_{L\to\infty}\lim_{N\to\infty}|I_{L,N;j}^b|/N=0\quad\text{and}\quad \lim_{L\to\infty}\lim_{N\to\infty}|\tilde{I}_{L,N;j}^b|/N=0.
	 	\end{align}
		Indeed, by definition, we have
	 	\begin{align*}
	 	0\leq\dfrac{1}{N}\sum_{i\in I_{L,N,z}}v_N^i=\dfrac{1}{N}\sum_{i\in I_{L,N;1}^s}v_N^i+\dfrac{1}{N}\sum_{i\in I_{L,N;1}^b}v_N^i\stackrel{\eqref{etadef}}{=}\dfrac{1}{N}\sum_{i\in I_{L,N}}\left(z_L-\tilde{z}_{L,N}^{i}\right),
	 	\end{align*}
	 	and $\lim_{L\to\infty}\lim_{N\to\infty}\frac1N\sum_{i\in I_{L,N}}\left(z_L-\tilde{z}_{L,N}^{i}\right)=0$ from \eqref{lim:zlzniC}. Thus, $\lim_{L\to\infty}\lim_{N\to\infty}\frac{1}{N}\sum_{i\in I_s^1}v_N^i=0$ as well as $\lim_{L\to\infty}\lim_{N\to\infty}\frac{1}{N}\sum_{i\in I_{L,N;1}^b}v_N^i=0$ directly follow, because of $v_n^i\geq0$. In particular, since we have
	 	\begin{align*}
	 	0\leq\dfrac{|I_{L,N;1}^b|}{N}\mu\leq\dfrac{1}{N}\sum_{i\in I_{L,N;1}^b}v_N^i,
	 	\end{align*}
	 	it follows that for every $\mu>0$ it holds true that $\lim_{L\to\infty}\lim_{N\to\infty}|I_{L,N;1}^b|/N=0$. Since $|I_{L,N;j}^b|\leq K|I_{L,N;1}^b|$, we also get $\lim_{L\to\infty}\lim_{N\to\infty}|I_{L,N;j}^b|/N=0$ for every $j=1,...,K$. In an analogous way, we have	
	 	\begin{align*}
	 		0\leq\dfrac{1}{N}\sum_{i\in \tilde{I}_{L,N;1}^s}\left(z_L-\tilde{z}_N^{i}\right)+\dfrac{1}{N}\sum_{i\in \tilde{I}_{L,N;1}^b}\left(z_L-\tilde{z}_N^{i}\right)=\dfrac{1}{N}\sum_{i\in I_{L,N}}\left(z_L-\tilde{z}_N^{i}\right),
	 	\end{align*}
	 	and $\lim_{L\to\infty}\lim_{N\to\infty}\frac1N\sum_{i\in I_{L,N}}\left(z_L-\tilde{z}_{L,N}^{i}\right)=0$ from \eqref{lim:zlzniC}. Therefore, we can directly deduce $\lim_{L\to\infty}\lim_{N\to\infty}\frac1N\sum_{i\in \tilde{I}_{L,N;1}^b}\left(z_L-\tilde{z}_{L,N}^{i}\right)=0$. Together with 	
	 	\begin{align*}
	 	0\leq\dfrac{|\tilde{I}_{L,N;1}^b|}{N}\mu\leq\dfrac{1}{N}\sum_{i\in \tilde{I}_{L,N;1}^b}\left(z_L-\tilde{z}_{L,N}^{i}\right),
	 	\end{align*}
	 	this yields $\lim_{L\to\infty}\lim_{N\to\infty}|\tilde{I}_{L,N;1}^b|/N=0$. Since $|\tilde{I}_{L,N;j}^b|\leq K|\tilde{I}_{L,N;1}^b|+K|I_{L,N;1}^b|$ we also get $\lim_{L\to\infty}\lim_{N\to\infty}|\tilde{I}_{L,N;j}^b|/N=0$, for every $j=1,...,K$. This concludes the proof of claim \eqref{claim}. \\ 

	Now, we consider \eqref{est:Ljjls2} for the remaining indices $I_{\mathrm{rem}}$, separately for the previously defined small and big shift sets. We start with the big ones. By definition of \textit{Case(ii) and (iii)}, it is $\hat{z}_N^i\geq \rho_z$ for all $i\in I_{\mathrm{rem}}$ and using (LJ2), we get
	\begin{align*}
		\mathrm{Diff}_{i,j}^{L,N}=J_j^L\left(\omega,i,\frac{1}{j}\sum_{k=i}^{i+j-1}\tilde{z}_N^{k}\right)-J_j^L\left(\omega,i,\frac{1}{j}\sum_{k=i}^{i+j-1}\hat{z}_N^{k}\right)\geq -d-d\max\left\{\Psi(\rho_z),|\rho_z|,b \right\}=:-C_z
	\end{align*}
	and therefore for \eqref{est:Ljjls2}
	\begin{align*}
			&\dfrac{1}{N}\sum_{j=1}^{K}\sum_{i\in \left(I_{L,N;j}^b\cup \tilde{I}_{L,N;j}^b\right)\cap I_{\mathrm{rem}}}\left(J_j^L\left(\omega,i,\frac{1}{j}\sum_{k=i}^{i+j-1}\tilde{z}_N^{k}\right)-J_j^L\left(\omega,i,\frac{1}{j}\sum_{k=i}^{i+j-1}\hat{z}_N^{k}\right)\right)\\
			&\geq  \dfrac{1}{N}\sum_{j=1}^{K}\sum_{i\in \left(I_{L,N;j}^b\cup \tilde{I}_{L,N;j}^b\right)\cap I_{\mathrm{rem}}}-C_z\geq- \dfrac{|I_{L,N;j}^b|+|\tilde{I}_{L,N;j}^b|}{N}K(-C_z).
	\end{align*}	
	Together with \eqref{claim} this shows \eqref{est:Ljjls2} for the big shift sets. The small shift sets yield by definition $\hat{z}_N^i\geq \rho_z$ and $\tilde{z}_N^i\geq \rho_z$ and allow for the following calculation: with \eqref{lipschitzhoelderestimate} and for $\mu<1$ we get
		\begin{align*}
		&\dfrac{1}{N}\sum_{j=1}^{K}\sum_{i\in \left(I_{L,N;j}^s\cup \tilde{I}_{L,N;j}^s\right)\cap I_{\mathrm{rem}}}\left(J_j^L\left(\omega,i,\frac{1}{j}\sum_{k=i}^{i+j-1}\tilde{z}_N^{k}\right)-J_j^L\left(\omega,i,\frac{1}{j}\sum_{k=i}^{i+j-1}\hat{z}_N^{k}\right)\right)\\
		\geq&-\dfrac{1}{N}\sum_{j=1}^{K}\sum_{i\in \left(I_{L,N;j}^s\cup \tilde{I}_{L,N;j}^s\right)\cap I_{\mathrm{rem}}}\left|J_j^L\left(\omega,i,\frac{1}{j}\sum_{k=i}^{i+j-1}\tilde{z}_N^{k}\right)-J_j^L\left(\omega,i,\frac{1}{j}\sum_{k=i}^{i+j-1}\hat{z}_N^{k}\right)\right|\\
		\geq& -\dfrac{1}{N}\sum_{j=1}^{K}\sum_{i\in \left(I_{L,N;j}^s\cup \tilde{I}_{L,N;j}^s\right)\cap I_{\mathrm{rem}}} \left[ J_j(\tau_i\omega) \right]_{C^{0,\alpha}(\rho_z,\infty)} \mu^\alpha\geq -\dfrac{1}{N}\sum_{j=1}^{K}\sum_{i=0}^{N-1} \left[ J_j(\tau_i\omega) \right]_{C^{0,\alpha}(\rho_z,\infty)} \mu^\alpha,
		\end{align*}
		because $|\tilde{z}_N^{k}-\hat{z}_N^{k}|=v_N^k<\mu$ or $|\tilde{z}_N^{k}-\hat{z}_N^{k}|=|\tilde{z}_N^{k}-z_L|<\mu$, by the definition of the small shift set. Here, $\left[ \cdot \right]_{C^{0,\alpha}(\rho_z,\infty)} $ is the Hölder coefficient. Now, (H1), Proposition \ref{Prop:averages} and Lemma \ref{lem:lipschitz} yield for fixed $\mu>0$
		\begin{align*}
		 \lim\limits_{L\to\infty}\lim\limits_{N\to\infty}\dfrac{1}{N}\sum_{j=1}^{K}\sum_{i=0}^{N-1} \left[ J_j(\tau_i\omega) \right]_{C^{0,\alpha}(\rho_z,\infty)} \mu^\alpha=\lim\limits_{L\to\infty} \mu^{\alpha} C(\rho_z)=\mu^{\alpha} C(\rho_z),
		\end{align*}
		with a constant $C(\rho_z)$ independent of $L$. As this holds for every $\mu>0$, we can afterwards take the limit $\mu\rightarrow0$, which shows \eqref{est:Ljjls2} for the small shift sets and concludes the proof.		
\end{proof}

\subsection{The case of nearest neighbor interactions, proof of Proposition~\ref{Prop:Jhomzgross}} \label{sec:NN}
\begin{proof}[Proof of Proposition~\ref{Prop:Jhomzgross}]
 For easier notation, we set $\mathbb{E}[J(\delta)]:=\mathbb{E}[J_1(\delta)]$, $\mathbb{E}[\delta]:=\mathbb{E}[\delta_1]$, $\delta(\omega):=\delta_1(\omega)$ and $J(\omega,i,z):=J_1(\omega,i,z)$. Further, we consider $A=[0,N)$, which is no restriction since we have shown in Proposition \ref{Prop:Jhomzklein} that the limit $J_{\mathrm{hom}}$ as well as $J_{\mathrm{hom}}^L$ are independent of $A$. The proof is divided into three steps.\\
		
	\textit{Step 1:} First, we determine the absolute minimum of $J_{\mathrm{hom}}^{(N)}(\omega,\cdot)$ for every $N\in\mathbb{N}$. Recalling the definition of $J_{\mathrm{hom}}^{(N)}$ as the infimum of the sum over individual functions $J(\omega,i,z_N^i)$ under the constraint $\sum_{i=0}^{N-1}z_N^i=Nz$, the absolute minimum is achieved when every single function $J(\omega,i,\cdot)$ takes on its own minimum. Therefore, the absolute minimum  is achieved at the minimum point $z_{\mathrm{min}}=\frac{1}{N}\sum_{i=0}^{N-1}\delta(\tau_i\omega)$. This reads
	\begin{align*}
	\min_{z\in\mathbb{R}}\left\{J_{\mathrm{hom}}^{(N)}(\omega,z)\right\}=J_{\mathrm{hom}}^{(N)}\left(\omega,z_{\mathrm{min}} \right)=J_{\mathrm{hom}}^{(N)}\left(\omega,\dfrac{1}{N}\sum_{i=0}^{N-1}\delta(\tau_i\omega) \right)=\dfrac{1}{N}\sum_{i=0}^{N-1}J(\omega,i,\delta(\tau_i\omega)).
	\end{align*}
	As this is the absolute minimum, we can conclude
	\begin{align*}
	J_{\mathrm{hom}}^{(N)}\left(\omega,z\right)\geq\dfrac{1}{N}\sum_{i=0}^{N-1}J(\omega,i,\delta(\tau_i\omega))\quad\text{for every}\ z\in\mathbb{R}.
	\end{align*}
	Taking the limit $\liminf_{N\rightarrow\infty}$, we then get with ergodicity and Proposition \ref{Prop:averages}
	\begin{align}
	\label{ziel1}
	\liminf_{N\rightarrow\infty}J_{\mathrm{hom}}^{(N)}(\omega,z)\geq\mathbb{E}[J(\delta)]\quad\text{for every}\ z\in\mathbb{R}.
	\end{align}
	
	\textit{Step 2:} We need to prove $J_{\mathrm{hom}}(\mathbb{E}[\delta])=\mathbb{E}[J(\delta)]$. By \eqref{ziel1}, it is only left to show
	\begin{align}
	\label{ziel2}
	\limsup_{N\rightarrow\infty}J_{\mathrm{hom}}^{(N)}(\omega,\mathbb{E}[\delta])\leq\mathbb{E}[J(\delta)],
	\end{align}
	to conclude the assertion. We start with
	\begin{align}
	\label{f1f2}
	\begin{split}
	J_{\mathrm{hom}}^{(N)}\left(\omega,\mathbb{E}[\delta]\right)&=J_{\mathrm{hom}}^{(N)}(\omega,\mathbb{E}[\delta])-\dfrac{1}{N}\sum_{i=0}^{N-1}J(\omega,i,\delta(\tau_i\omega))+\dfrac{1}{N}\sum_{i=0}^{N-1}J(\omega,i,\delta(\tau_i\omega))\\
	&=:f(\omega,N)+\dfrac{1}{N}\sum_{i=0}^{N-1}J(\omega,i,\delta(\tau_i\omega)),
	\end{split}
	\end{align}
	where $f(\omega,N):=J_{\mathrm{hom}}^{(N)}(\omega,\mathbb{E}[\delta])-\frac{1}{N}\sum_{i=0}^{N-1}J(\omega,i,\delta(\tau_i\omega))$. Then, we get
	\begin{align*}
	\left|f(\omega,N)\right|&=\left|J_{\mathrm{hom}}^{(N)}(\omega,\mathbb{E}[\delta])-\dfrac{1}{N}\sum_{i=0}^{N-1}J(\omega,i,\delta(\tau_i\omega))\right|\\
	&=\left|J_{\mathrm{hom}}^{(N)}\left(\omega,\underbrace{\lim\limits_{k\rightarrow\infty}\dfrac{1}{k}\sum_{i=0}^{k-1}\delta(\tau_i\omega)-\dfrac{1}{N}\sum_{i=0}^{N-1}\delta(\tau_i\omega)}_{=:\beta(\omega,N)}+\dfrac{1}{N}\sum_{i=0}^{N-1}\delta(\tau_i\omega) \right)-\dfrac{1}{N}\sum_{i=0}^{N-1}J(\omega,i,\delta(\tau_i\omega))\right|,
	\end{align*}
	with $\beta(\omega,N)\rightarrow0$ for $N\rightarrow\infty$, because of $\mathbb{E}[\delta]=\lim\limits_{N\to\infty}\frac{1}{N}\sum_{i=0}^{N-1}\delta(\tau_i\omega)$. By setting
	\begin{align*}
	z_N^i:=\delta(\tau_i\omega)+\beta(\omega,N)
	\end{align*}
	which fulfils the constraint $\sum_{i=0}^{N-1}z_N^i=N\left(\beta(\omega,N)+\dfrac{1}{N}\sum_{i=0}^{N-1}\delta(\tau_i\omega) \right)$ of the infimum problem, we can further estimate
	\begin{align*}
	\left|f(\omega,N)\right|&=\left|J_{\mathrm{hom}}^{(N)}\left(\omega,\beta(\omega,N)+\dfrac{1}{N}\sum_{i=0}^{N-1}\delta(\tau_i\omega) \right)-\dfrac{1}{N}\sum_{i=0}^{N-1}J(\omega,i,\delta(\tau_i\omega))\right|\\
	&\leq\left| \dfrac{1}{N}\sum_{i=0}^{N-1}J(\omega,i,\delta(\tau_i\omega)+\beta(\omega,N))-\dfrac{1}{N}\sum_{i=0}^{N-1}J(\omega,i,\delta(\tau_i\omega))\right|\\
	&\leq\dfrac{1}{N}\sum_{i=0}^{N-1}\Big|J(\omega,i,\delta(\tau_i\omega)+\beta(\omega,N))-J(\omega,i,\delta(\tau_i\omega)) \Big|.
	\end{align*}
	Due to the special choice of $z_N^i$, we have $\delta(\tau_i\omega)+\beta(\omega,N)\in(0,+\infty)$ for every $i\in0,...,N-1$ for $N$ large enough, because of two reasons. First, it is $\mathrm{dom}J(\omega,i,\cdot)=(0,+\infty)$ for all $J(\omega,i,\cdot)$ and $\delta(\tau_i\omega)\in(\frac{1}{d},d)\subset(0,+\infty)$ for all $i$, due to (LJ2). The second reason is that $\beta(\omega,N)\rightarrow0$ for $N\rightarrow\infty$. Therefore, it exists $0<\xi<\frac{1}{d}$ such that for $N$ big enough it holds true that $\delta(\tau_i\omega)+\beta(\omega,N)\in(\frac{1}{d}-\xi,+\infty)\subset(0,+\infty)$. Due to (LJ1), $J\in C^{1,\alpha}$ on $(\frac{1}{d}-\xi,\infty)$ and we can continue our estimate as follows:
	\begin{align*}
	\left|f(\omega,N)\right|&\leq |\beta(\omega,N)|^{\alpha}\dfrac{1}{N}\sum_{i=0}^{N-1} \left[J_j(\tau_i\omega,\cdot)\right]_{C^{0,\alpha}(\frac{1}{d}-\xi,+\infty)}\\& \leq |\beta(\omega,N)|^{\alpha} \max\left\{\dfrac{1}{N}\sum_{i=0}^{N-1}\left[J_j(\tau_i\omega,\cdot)\right]_{C^{0,\alpha}(\delta(\tau_i\omega),+\infty)},C_{\mathrm{Lip}}\left(\frac{1}{d}-\xi\right)\right\} \rightarrow 0\quad\text{for}\ N\rightarrow\infty,
	\end{align*}
	because of (H1), Proposition \ref{Prop:averages}, Lemma \ref{lem:lipschitz} and $\beta(\omega,N)\rightarrow0$ for $N\rightarrow\infty$.\\
	
	By the result $f(\omega,N)\to0$ and by $\frac{1}{N}\sum_{i=0}^{N-1}J(\omega,i,\delta(\tau_i\omega))\to \mathbb{E}[J(\delta)] $ due to Proposition \ref{Prop:averages}, we can calculate $\limsup_{N\to\infty}$ in \eqref{f1f2} and get
	\begin{align*}
	\limsup_{N\rightarrow\infty}J_{\mathrm{hom}}^{(N)}(\omega,\mathbb{E}[\delta])&\leq\lim\limits_{N\rightarrow\infty}f(\omega,N)+\lim\limits_{N\rightarrow\infty}\dfrac{1}{N}\sum_{i=0}^{N-1}J(\omega,i,\delta(\tau_i\omega))\rightarrow 0+\mathbb{E}[J(\delta)].
	\end{align*}
	This shows \eqref{ziel2} and, as said before, together with \eqref{ziel1} this yields that
	\begin{align*}
	J_{\mathrm{hom}}(\mathbb{E}[\delta])=\mathbb{E}[J(\delta)].
	\end{align*}
	
	\textit{Step 3:} 
	\begin{figure}[t]
	\centering
			\includegraphics[width=0.4\linewidth]{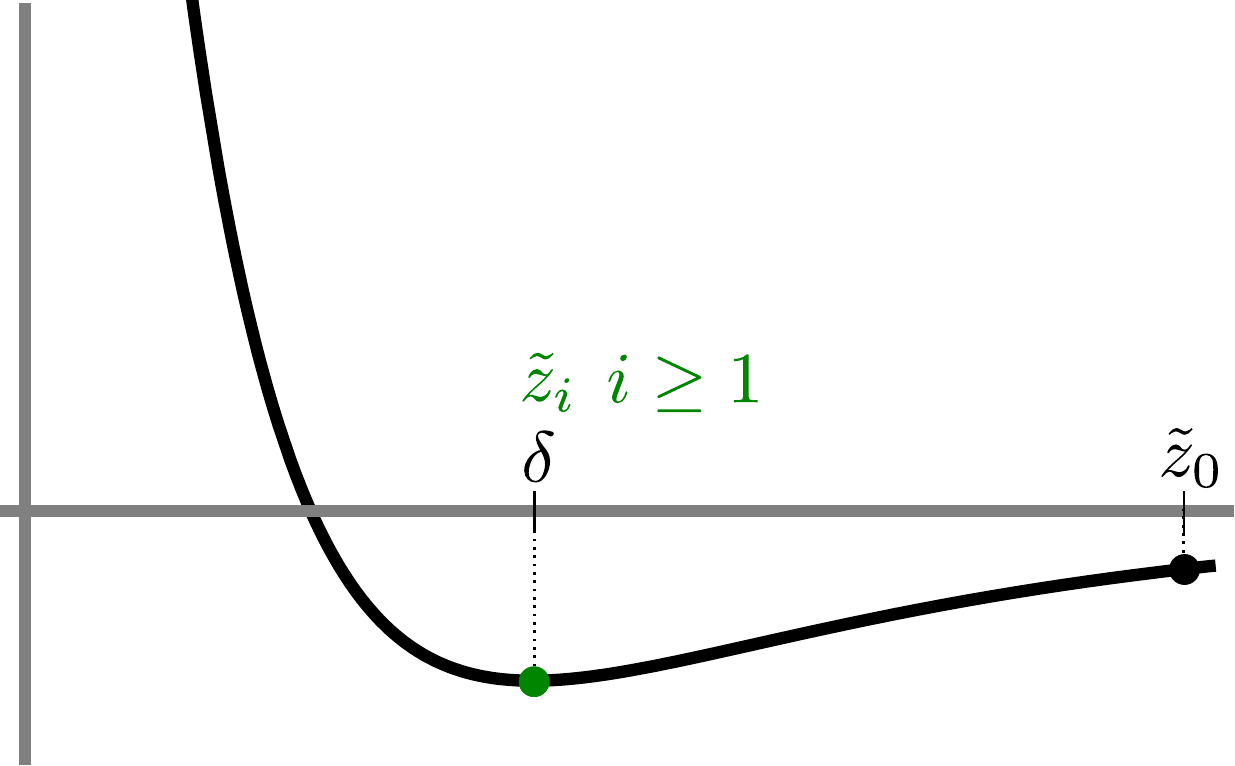}
			\caption{The new candidate $\tilde{z}$ for the minimizer.}
	\label{fig:JhomGleichEjDelta}
	\end{figure}
	
	We need to show $J_{\mathrm{hom}}(z)=\mathbb{E}[J(\delta)]$ for every $z>\mathbb{E}[\delta]$. For this, consider $z>\mathbb{E}[\delta]$. We set
	\begin{align*}
	\tilde{z}_N^i=\begin{cases}
	Nz-\sum_{i=1}^{N}\delta(\tau_i\omega)&\quad\text{for}\ i=0,\\
	\delta(\tau_i\omega)&\quad\text{for}\ i=1,...N-1,
	\end{cases}
	\end{align*}
	which fulfils the constraint $\sum_{i=0}^{N-1}\tilde{z}_N^i=Nz$, and is shown in Figure \ref{fig:JhomGleichEjDelta}. Then, it holds true that
	\begin{align*}
	J_{\mathrm{hom}}^{(N)}(\omega,z)&\leq\dfrac{1}{N-1}\sum_{i=0}^{N-1}J\left(\omega,i,\tilde{z}_N^i\right)=\dfrac{1}{N}\sum_{i=1}^{N-1}J(\omega,i,\delta(\tau_i\omega))+\dfrac{1}{N}J\left(\omega,0,Nz-\sum_{i=1}^{N-1}\delta(\tau_i\omega)\right)\\
	&=\dfrac{1}{N}\sum_{i=0}^{N-1}J(\omega,i,\delta(\tau_i\omega))-\dfrac{1}{N}J(\omega,0,\delta(\tau_0\omega))+\dfrac{1}{N}J\left(\omega,0,Nz-\sum_{i=1}^{N-1}\delta(\tau_i\omega)\right).
	\end{align*}
	With $\frac{1}{N}\sum_{i=0}^{N-1}\delta(\tau_i\omega)\to\mathbb{E}[\delta]$, it holds true that $Nz-\sum_{i=1}^{N-1}\delta(\tau_i\omega)=N\left(z-\frac{1}{N}\sum_{i=0}^{N-1}\delta(\tau_i\omega) \right)+\delta(\tau_0\omega)\geq NC\rightarrow\infty$ for $N\rightarrow\infty$ and therefore $J\left(\omega,0,Nz-\sum_{i=1}^{N-1}\delta(\tau_i\omega)\right)\rightarrow0$, due to (LJ1). With this, we get by taking $\limsup_{N\rightarrow\infty}$
	\begin{align*}
	\limsup_{N\rightarrow\infty}J_{\mathrm{hom}}^{(N)}(\omega,z)\leq\mathbb{E}[J(\delta)],
	\end{align*}
	which shows \eqref{ziel2}. Together with \eqref{ziel1} and Proposition \ref{Prop:Jhomzklein}, we get
	\begin{align*}
	J_{\mathrm{hom}}(z)=\lim\limits_{N\to\infty}J_{\mathrm{hom}}^{(N)}(\omega,z)=\mathbb{E}[J(\delta)],\quad\text{for all}\ z\geq \mathbb{E}[\delta].
	\end{align*}
	The proof for $J_{\mathrm{hom}}^L(z)$ is exactly the same.
\end{proof}

\appendix

\section{Appendix}

\begin{theorem}[Interpolation I] 
\label{thm:interpolationaffine}
	Let $u\in BV(0,1)$. For $\delta>0$ let $N\in\mathbb{N}$ and $(t_j)_{j=0,...,N}\subset[0,1]$ be such that $t_0=0$, $t_N=1$, $\delta<t_{j+1}-t_j<2\delta$, $t_j$ is not in the jump set of $u$. Let $v_N$ be the piecewise affine interpolation of $u$ with grid points $t_j$, which means $v_N\in C(0,1)$ is affine on $[t_{j-1},t_j)$ and it holds $v_N(t_j)=u(t_j)$ for all $j=0,...,N$. Then, it holds $v_N\rightharpoonup^*u$ in $BV(0,1)$ for $\delta\to0$.
\end{theorem}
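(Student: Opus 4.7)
The plan is to prove weak-$*$ convergence in $BV(0,1)$ by verifying the two equivalent conditions (cf.\ \cite[Prop.~3.13]{AmbrosioFuscoPallara}): strong convergence $v_N\to u$ in $L^1(0,1)$ and uniform boundedness $\sup_N |Dv_N|(0,1)<\infty$. Throughout I identify $u$ with its good representative, which is legitimate since $BV(0,1)\hookrightarrow L^\infty(0,1)$ and the grid points $t_j$ lie outside the jump set $S_u$, so that $u(t_j)=u(t_j+)=u(t_j-)$ is well defined.

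First I would establish the uniform $BV$-bound. Since $v_N$ is affine on $(t_{j-1},t_j)$ with slope $(u(t_j)-u(t_{j-1}))/(t_j-t_{j-1})$, one has
\begin{equation*}
|Dv_N|((t_{j-1},t_j))=|u(t_j)-u(t_{j-1})|.
\end{equation*}
Because $t_{j-1},t_j\notin S_u$, the measure $Du$ has no atoms at the endpoints and $u(t_j)-u(t_{j-1})=Du((t_{j-1},t_j))$, whence $|u(t_j)-u(t_{j-1})|\le |Du|((t_{j-1},t_j))$. Summing over the disjoint intervals yields $|Dv_N|(0,1)\le |Du|(0,1)$. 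Combined with $\|v_N\|_{L^\infty(0,1)}\le \max_j|u(t_j)|\le \|u\|_{L^\infty(0,1)}$, this gives a uniform bound in $BV(0,1)$.

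Next I would prove the $L^1$ convergence by a direct pointwise estimate. For a.e.\ $x\in(t_{j-1},t_j)$ (i.e., $x\notin S_u$), the good representative satisfies
\begin{equation*}
|v_N(x)-u(x)|\le |v_N(x)-v_N(t_{j-1})|+|u(t_{j-1})-u(x)|\le |u(t_j)-u(t_{j-1})|+|Du|((t_{j-1},x))\le 2|Du|((t_{j-1},t_j)).
\end{equation*}
Integrating over $(t_{j-1},t_j)$ and using $t_j-t_{j-1}<2\delta$ gives $\int_{t_{j-1}}^{t_j}|v_N-u|\,dx\le 4\delta\,|Du|((t_{j-1},t_j))$, and summing over $j$ yields $\|v_N-u\|_{L^1(0,1)}\le 4\delta |Du|(0,1)\to 0$ as $\delta\to0$.

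The two estimates together give $v_N\rightharpoonup^* u$ in $BV(0,1)$. I do not expect any serious obstacle: the only point requiring some care is the identification of $u(t_j)-u(t_{j-1})$ with $Du((t_{j-1},t_j))$, which relies on the hypothesis $t_j\notin S_u$ and the choice of the good representative, but this is standard one-dimensional $BV$ theory. One also has to implicitly assume (or arrange) that $t_0=0$ and $t_N=1$ are not atoms of $Du$; if $0$ or $1$ happens to lie in $S_u$, the argument still goes through after a negligible modification at the boundary since only the open intervals $(t_{j-1},t_j)$ enter the variation estimate.
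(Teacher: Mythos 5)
The paper states Theorem~\ref{thm:interpolationaffine} in the appendix without providing a proof, so there is nothing to compare against; the result is treated there as a standard piece of one-dimensional $BV$ theory. Your argument is the natural one and it is correct: the identification $u(t_j)-u(t_{j-1})=Du((t_{j-1},t_j))$ is legitimate because $t_{j-1},t_j\notin S_u$ (with $u(0)$, $u(1)$ read as the one-sided traces $u(0+)$, $u(1-)$), and from it you correctly obtain both the uniform bound $|Dv_N|(0,1)\le|Du|(0,1)$ and the pointwise estimate $|v_N(x)-u(x)|\le 2|Du|((t_{j-1},t_j))$ for a.e.\ $x\in(t_{j-1},t_j)$, which after integration gives $\|v_N-u\|_{L^1(0,1)}\le 4\delta\,|Du|(0,1)\to0$. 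Together with the equivalence of weak-$*$ convergence in $BV$ to $L^1$-convergence plus a uniform total-variation bound (Prop.~3.13 in \cite{AmbrosioFuscoPallara}), this yields $v_N\rightharpoonup^* u$ in $BV(0,1)$, as required. The only point worth making crisper in a written version is the choice of representative: you should fix $u$ to be its precise (or left-/right-continuous) representative once and for all, so that $|u(t_j)|\le\|u\|_{L^\infty(0,1)}$ and $u(y)-u(x)=Du((x,y))$ both hold at the non-jump grid points without further comment.
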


\begin{theorem}[Interpolation II]
\label{Thm:interpolationConstant} 
	Let $(u_n)\subset \mathcal{A}_n$ be a sequence of piecewise affine functions weakly$^*$ converging to $u$ in $BV(0,1)$ with $\sup_n\lVert u_n'\rVert_{L^1(0,1)}<\infty$. Let $(\hat{u}_n)$ be the sequence of piecewise constant functions defined by $\hat{u}_n(i/n)=u_n(i/n)$ with $\hat{u}_n$ is constant on $[i,i+1)\frac 1n$, $i\in\{0,1,...,n-1\}$. Then, $(\hat{u}_n)$ converges weakly$^*$ to $u$ in $BV(0,1)$.
\end{theorem}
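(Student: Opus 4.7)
My plan is to reduce weak$^*$ convergence in $BV(0,1)$ to the two well-known sufficient conditions: strong convergence of $\hat{u}_n$ to $u$ in $L^1(0,1)$ together with a uniform bound on the total variations $|D\hat{u}_n|([0,1])$. Both conditions will follow from an elementary comparison between $u_n$ and its piecewise-constant interpolant $\hat{u}_n$, exploiting that $u_n$ is affine on each interval $[i/n,(i+1)/n)$ so that oscillations and jumps can be controlled by $\int |u_n'|$.

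First I would estimate the $L^1$ distance. On each interval $I_i := [i/n,(i+1)/n)$, the function $u_n$ is affine and $\hat{u}_n \equiv u_n(i/n)$. Hence
\begin{equation*}
  \int_{I_i}|u_n(x)-\hat{u}_n(x)|\,\mathrm{d}x \leq \frac{1}{n}\bigl|u_n((i+1)/n)-u_n(i/n)\bigr| = \frac{1}{n}\int_{I_i}|u_n'(x)|\,\mathrm{d}x,
\end{equation*}
and summing over $i=0,\dots,n-1$ yields $\|u_n-\hat{u}_n\|_{L^1(0,1)} \leq \tfrac{1}{n}\|u_n'\|_{L^1(0,1)}$. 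Using the hypothesis $\sup_n\|u_n'\|_{L^1(0,1)}<\infty$ and the fact that weak$^*$ convergence in $BV(0,1)$ implies $u_n \to u$ in $L^1(0,1)$, the triangle inequality gives $\hat{u}_n \to u$ in $L^1(0,1)$.

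Next I would bound the total variation of $\hat{u}_n$. The only contributions to $D\hat{u}_n$ are Dirac masses at the nodes $i/n$ with weights $u_n(i/n)-u_n((i-1)/n)$, so
\begin{equation*}
  |D\hat{u}_n|([0,1]) \;=\; \sum_{i=1}^{n-1}\bigl|u_n(i/n)-u_n((i-1)/n)\bigr| \;\leq\; \sum_{i=1}^{n-1}\int_{(i-1)/n}^{i/n}|u_n'(x)|\,\mathrm{d}x \;\leq\; \|u_n'\|_{L^1(0,1)},
\end{equation*}
which is uniformly bounded by hypothesis. Combining the strong $L^1$ convergence $\hat{u}_n \to u$ with $\sup_n|D\hat{u}_n|([0,1])<\infty$ is, by definition, precisely $\hat{u}_n \rightharpoonup^\ast u$ in $BV(0,1)$ (equivalently, weak$^*$ compactness of bounded sequences of Radon measures together with uniqueness of the limit identifies the weak$^*$ limit of $D\hat{u}_n$ as $Du$).

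I do not anticipate a genuine obstacle here; the argument is a routine interpolation estimate. The only point that requires a moment of care is keeping track of the precise location of the jumps of $\hat{u}_n$ and making sure the node values at $0$ and $1$ are handled consistently with the convention $\hat{u}_n(i/n)=u_n(i/n)$ used in the statement. Once this bookkeeping is fixed, both the $L^1$-bound and the total-variation bound are immediate from $u_n$ being affine on each cell of length $1/n$.
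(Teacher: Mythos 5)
Your proof is correct. Note that the paper does not actually supply a proof of this interpolation lemma (it is stated in the appendix without argument), so there is nothing to compare against; your argument is the natural one, and both estimates — the $L^1$ comparison $\|u_n-\hat u_n\|_{L^1}\leq \tfrac1n\|u_n'\|_{L^1}$ exploiting that $u_n$ is affine on each cell, and the total-variation bound $|D\hat u_n|((0,1))=\sum_{i=1}^{n-1}|u_n(i/n)-u_n((i-1)/n)|\leq \|u_n'\|_{L^1}$ — are exactly what is needed. One cosmetic remark: the equivalence ``$L^1$-convergence plus uniformly bounded total variation $\Longleftrightarrow$ weak$^*$ convergence in $BV$'' is a proposition (e.g.\ \cite[Prop.~3.13]{AmbrosioFuscoPallara}) rather than the definition, as your own parenthetical already indicates; you should cite it rather than invoke ``by definition.''
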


\begin{theorem}[Subadditive ergodic theorem, Akcoglu and Krengel, {\cite{AkcogluKrengel1981}}]
\label{Thm:AkcogluKrengel}
Let $F:\mathcal{I}\rightarrow L^1(\Omega)$ be a subadditive stochastic process and let $\{I_k\}_{k\in\mathbb{N}}$ be a regular family of sets in $\mathcal{I}$ with $\lim\limits_{k\rightarrow\infty}I_k=\mathbb{R}$. If $F$ is stationary w.r.t.~a measure preserving group action $\{\tau_z\}_{z\in\mathbb{Z}}$, that is 
\begin{align}
\forall I\in\mathcal{I},\ \forall z\in\mathbb{Z},\ F(I+z;\omega)=F(I;\tau_z\omega)\ \text{almost surely}, 
\end{align}
then there exists $\phi:\omega\rightarrow\mathbb{R}$ such that for $\mathbb{P}$-almost every $\omega$
\[\lim\limits_{k\rightarrow\infty}\dfrac{F(I_k;\omega)}{|I_k|}=\phi(\omega).  \]
Further, if $\{\tau_z\}_{z\in\mathbb{Z}}$ is ergodic, then $\phi$ is constant.
\end{theorem}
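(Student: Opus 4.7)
The plan is to follow the Kingman-style scheme, reducing the subadditive statement to the pointwise Birkhoff ergodic theorem applied to a fixed reference interval. Define the deterministic quantity $\gamma := \inf_{J \in \mathcal{I}} \mathbb{E}[F(J)]/|J|$, which lies in $[-\infty,+\infty)$ since $F(J)\in L^1(\Omega)$ for every $J$; the case $\gamma = -\infty$ would be handled separately by truncation. The aim is to establish both $\limsup_{k\to\infty} F(I_k;\omega)/|I_k| \leq \gamma$ and a matching lower bound $\liminf_{k\to\infty} F(I_k;\omega)/|I_k| \geq \mathbb{E}[\phi^- \mid \mathcal{F}_{\mathrm{inv}}]$ almost surely, where $\mathcal{F}_{\mathrm{inv}}$ is the $\sigma$-algebra of $\tau$-invariant events and $\phi^-$ denotes the liminf. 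Matching the two bounds yields a.s.\ convergence to a $\tau$-invariant limit $\phi$; under ergodicity, $\mathcal{F}_{\mathrm{inv}}$ is trivial and $\phi$ is forced to be a.s.\ constant.

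For the upper bound, I would fix a reference interval $J$ and, for large $k$, tile the target interval $I_k$ by disjoint integer translates $J + z_1, \ldots, J + z_{M_k}$ contained in $I_k$, leaving a remainder $R_k$. Combining subadditivity with the stationarity identity $F(J + z_j; \omega) = F(J; \tau_{z_j}\omega)$ yields
\[
F(I_k;\omega) \,\leq\, \sum_{j=1}^{M_k} F(J; \tau_{z_j}\omega) \,+\, F(R_k;\omega).
\]
Dividing by $|I_k|$, the sum converges almost surely to $\mathbb{E}[F(J)\mid\mathcal{F}_{\mathrm{inv}}]/|J|$ by Birkhoff's theorem applied to the $L^1$ random variable $F(J)$, while the remainder vanishes because $|R_k|/|I_k|\to 0$ by regularity of $\{I_k\}$ and because $F$ is integrable. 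Taking the infimum over $J$ gives the upper bound; under ergodicity the conditional expectation reduces to $\mathbb{E}[F(J)]/|J|$ and the bound matches $\gamma$.

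For the lower bound, I would first check that $\phi^-(\omega) := \liminf_k F(I_k;\omega)/|I_k|$ is a.s.\ $\tau$-invariant: a fixed integer shift of $I_k$ alters $F(I_k;\cdot)$ only by $O(1)$ boundary corrections via subadditivity and stationarity, which are negligible after division by $|I_k|$. To prove $\phi^- \geq \gamma$ a.s., I would use the contrapositive: assuming $\mathbb{P}(\phi^- < \gamma - \varepsilon) > 0$ for some $\varepsilon > 0$, cover $I_k$ by (predominantly) translates $I_n + z$ chosen from the event $\{\phi^- < \gamma - \varepsilon\}$ via a Vitali-type stopping-time construction, use subadditivity to bound $F(I_k;\omega)/|I_k|$ in terms of $\sum_z F(I_n + z;\omega)/|I_k|$, and reach a contradiction with the definition of $\gamma$ through the ergodic theorem applied to the indicator of that event. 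This \emph{Kingman lemma} step is the technical heart.

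The main obstacle is the interplay between the $\mathbb{Z}$-valued group action and the real-valued interval family: any tiling or Vitali covering by translates $J + z$ with $z \in \mathbb{Z}$ introduces an $O(1)$ error per tile, and these must aggregate to an $o(|I_k|)$ correction. The regularity hypothesis on $\{I_k\}$---volumes tending to infinity with vanishing boundary-to-bulk ratio---is precisely what delivers this control. Once both inequalities coincide on $\gamma$, ergodicity reduces the $\tau$-invariant limit to an a.s.\ constant and the theorem is complete.
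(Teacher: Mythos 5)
This theorem is not proved in the paper: it is cited verbatim from Akcoglu--Krengel (1981) and used as a tool, so there is no ``paper's own proof'' to compare against. Your proposal is therefore a blind reconstruction, and it follows a Kingman-style route (tiling plus Birkhoff for the upper bound, a covering/maximal argument for the lower bound) that is standard for one-dimensional $\mathbb Z$-actions. This is a legitimate and arguably more elementary alternative to Akcoglu and Krengel's original argument, which is set in $\mathbb Z^d$ and rests on a Wiener-type maximal inequality for superadditive set functions rather than a stopping-time covering; the two give the same conclusion, with your route being shorter in $d=1$ and theirs generalising cleanly to higher dimension and to $\mathbb R^d$.

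There are, however, two genuine gaps in the sketch. First, your announced lower-bound target $\liminf_k F(I_k;\omega)/|I_k| \geq \mathbb E[\phi^-\mid \mathcal F_{\rm inv}]$ is circular: once you have checked that $\phi^-$ is $\tau$-invariant, $\mathbb E[\phi^-\mid\mathcal F_{\rm inv}]=\phi^-$ and the inequality says nothing. The correct target is to show $\mathbb E[\phi^-]\geq \gamma$ and to pair this with an upper bound of the form $\phi^+\leq \psi$ for some $\tau$-invariant $\psi$ with $\mathbb E[\psi]=\gamma$; then $\phi^-\leq\phi^+$, $\mathbb E[\phi^+-\phi^-]\leq 0$ and the two coincide a.s. Your later phrasing ``$\phi^-\geq\gamma$ a.s.'' is the right pointwise statement only under ergodicity; the theorem asserts a.s.\ convergence to an invariant $\phi$ in general, and the contradiction argument you sketch (assume $\mathbb P(\phi^-<\gamma-\varepsilon)>0$ and derive absurdity) does not produce a contradiction without ergodicity. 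You would need either to condition throughout on $\mathcal F_{\rm inv}$ or to first prove the ergodic case and then appeal to ergodic decomposition. Second, the theorem as transcribed omits the standard integrability lower bound (e.g.\ $F(I)\geq -c\,|I|$ or $\inf_I \mathbb E[F(I)]/|I|>-\infty$) that Akcoglu--Krengel impose; you flag this (``handled by truncation'') but do not carry it out, and without such a hypothesis the limit $\phi$ could be $-\infty$ on a set of positive measure, so the truncation step is not optional. Apart from these two points the structure is sound, and your observation that regularity of $\{I_k\}$ is exactly what turns the $O(1)$-per-tile errors from the $\mathbb Z$-to-$\mathbb R$ mismatch into an $o(|I_k|)$ correction is the right heuristic.
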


\begin{theorem}[Attouch Lemma,{\cite[Cor.~1.16]{Attouch1984}}]
\label{attouch}
Let $(a_{n,m})_{n\in\mathbb{N},m\in\mathbb{N}}$ be a doubly indexed sequence in $\overline{\mathbb{R}}$. Then, there exists a mapping $n\mapsto m(n)$, increasing to $+\infty$, such that
\begin{align*}
\limsup_{n\to\infty}a_{n,m(n)}\leq\limsup_{m\to\infty}\left( \limsup_{n\to\infty}a_{n,m}\right).
\end{align*}
\end{theorem}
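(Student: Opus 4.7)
The approach is a straightforward diagonal argument driven purely by the definitions of $\limsup$. Write $b_m := \limsup_{n\to\infty} a_{n,m}$ and $L := \limsup_{m\to\infty} b_m \in [-\infty,+\infty]$. If $L = +\infty$, the inequality is automatic for any choice of $m(n) \to +\infty$, for instance $m(n) := n$, so I would dispatch this case first and concentrate on $L < +\infty$ (with the convention that the tolerance $L + 1/k$ appearing below is replaced by $-k$ when $L = -\infty$).

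The construction then proceeds in two nested extraction steps. First, applying the outer $\limsup$ to the sequence $(b_m)$, for each $k \in \mathbb{N}$ there is an index $M_k$ with $b_m \le L + 1/k$ for all $m \ge M_k$, and I may take $M_k$ strictly increasing in $k$. Second, since $b_{M_k} = \limsup_{n\to\infty} a_{n, M_k} \le L + 1/k$, the inner $\limsup$ yields an index $N_k$ such that $a_{n, M_k} \le L + 2/k$ for every $n \ge N_k$; again I arrange $N_k$ to be strictly increasing with $N_k \to +\infty$. With these two sequences in hand, define $m(n) := M_k$ for $N_k \le n < N_{k+1}$ (and $m(n) := M_1$ for $n < N_1$). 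This is a non-decreasing map diverging to $+\infty$, and for every $n \in [N_k, N_{k+1})$ we have $a_{n, m(n)} = a_{n, M_k} \le L + 2/k$, hence $\limsup_{n\to\infty} a_{n, m(n)} \le L$, as required. If the phrase ``increasing to $+\infty$'' is read strictly, a harmless padding by intermediate values $M_k, M_k+1, M_k+2, \dots$ on each block $[N_k, N_{k+1})$ makes $m$ strictly increasing without spoiling the tail estimate.

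The main bookkeeping obstacle is the simultaneous control of the two $\limsup$'s: the outer extraction only constrains $b_{M_k}$, which is itself a limit in $n$, so one is forced to introduce the second index $N_k$ in order to convert the inner $\limsup$ bound into a pointwise bound on $a_{n, M_k}$. Apart from this double-indexing and the trivial cases $L = \pm\infty$, no real analytic difficulty arises; the result is a purely combinatorial consequence of the definition of $\limsup$.
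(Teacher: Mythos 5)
The paper does not actually prove this lemma; it is stated in the appendix as a cited result (\cite[Cor.~1.16]{Attouch1984}), so there is no in-paper argument to compare against. On its own merits, your main construction is the standard diagonal extraction and it is correct as far as producing a \emph{non-decreasing} $m(n)\to+\infty$ with $\limsup_n a_{n,m(n)}\le L$: the two nested extractions (first $M_k$ from the outer $\limsup$, then $N_k$ from the inner one) are exactly what is needed, and the handling of $L=\pm\infty$ is fine. Since ``increasing to $+\infty$'' in this context (and in Attouch's book) is naturally read as non-decreasing and divergent, this already establishes the statement.

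The one place where the argument has a genuine gap is the final ``harmless padding'' remark. If on the block $[N_k,N_{k+1})$ you replace the constant value $M_k$ by $M_k,M_k+1,M_k+2,\dots$, then for $n=N_k+j$ you are evaluating $a_{N_k+j,\,M_k+j}$, but your threshold $N_k$ was chosen only so that $a_{n,M_k}\le L+2/k$ for $n\ge N_k$; it says nothing about $a_{n,M_k+j}$ for $j>0$. Knowing $b_{M_k+j}\le L+1/k$ gives a $\limsup$ bound, not a pointwise one at $n=N_k+j$, so the tail estimate can in fact be spoiled. The repair is straightforward but must be made explicit: choose $N_k$ large enough to dominate, simultaneously, thresholds $N'(m,k)$ with $a_{n,m}\le L+2/k$ for all $n\ge N'(m,k)$ and for every $m\in\{M_k,\dots,M_{k+1}-1\}$ (a finite maximum), and arrange inductively that $N_{k+1}-N_k\le M_{k+1}-M_k$ so that the block is long enough to accommodate a strictly increasing run of $m$-values without overshooting $M_{k+1}$. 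With that adjustment the strictly increasing version also goes through.
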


\begin{theorem}[{\cite[Thm.~1.62]{Gelli}}]
\label{PropGelli}
Let $f:\mathbb{R}\rightarrow\mathbb{R}\cup \{+\infty\}$ be convex, lower semicontinuous, monotone decreasing with
\begin{align*}
\lim\limits_{z\rightarrow-\infty}\dfrac{f(z)}{|z|}=+\infty\quad\text{and}\quad \lim\limits_{z\rightarrow+\infty}f(z)=c\in\mathbb{R}.
\end{align*}
Let $F:BV(a,b)\rightarrow\mathbb{R}\cup\{+\infty\}$ be defined as
\begin{align*}
F(u):=\begin{cases}
\int_{a}^{b}f(u')\,\mathrm{d}x&\quad\text{if}\ u\in W^{1,1}(0,1),\\
+\infty&\quad\text{else.}
\end{cases}
\end{align*}
Let the functional $\mathcal{F}:BV(a,b)\rightarrow\mathbb{R}\cup\{+\infty\}$ be defined as
\begin{align*}
\mathcal{F}(u):=\begin{cases}
\int_{a}^{b}f(u')\,\mathrm{d}x&\quad\text{if}\ u\in BV(a,b),\ D^su\geq0,\\
+\infty&\quad\text{else.}
\end{cases}
\end{align*}
Let $\overline{F}$ denote the lower semicontinuous envelope of $F$ with respect to the weak$^*$ convergence in $BV(a,b)$. Then it holds $\mathcal{F}\equiv\overline{F}$.
\end{theorem}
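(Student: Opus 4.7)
The plan is to prove the two inequalities $\mathcal F\leq \overline F$ and $\mathcal F\geq \overline F$ separately. Both hinge on the identification of the recession function of $f$: since $f$ is convex, decreasing and has finite limit $c$ at $+\infty$, its recession $f^\infty(z):=\lim_{t\to\infty} f(tz)/t$ satisfies $f^\infty(z)=0$ for $z\geq 0$ and $f^\infty(z)=+\infty$ for $z<0$ (the latter by the superlinear growth at $-\infty$). This is exactly what encodes the constraint $D^s u\geq 0$ in $\mathcal F$: positive singular contributions are ``free'' while negative ones are infinitely penalized.

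\textbf{Liminf inequality ($\mathcal F\leq \overline F$).} Let $u_n\in W^{1,1}(a,b)$ with $u_n\rightharpoonup^* u$ in $BV(a,b)$ and $\liminf_n F(u_n)<+\infty$ (otherwise nothing to prove). I would fix, for each $\delta>0$, a partition $a=t_0<t_1<\cdots<t_N=b$ with $\delta<t_{j}-t_{j-1}<2\delta$ whose nodes lie outside the (at most countable) jump set of $u$ and are Lebesgue points with $u_n(t_j)\to u(t_j)$; such a partition exists by weak$^*$ convergence. Applying Jensen's inequality on each $I_j=[t_{j-1},t_j]$ and using lower semicontinuity of $f$ yields
\begin{equation*}
\liminf_{n\to\infty}\int_a^b f(u_n')\,dx \;\geq\; \sum_{j=1}^N (t_j-t_{j-1})\,f\!\left(\tfrac{u(t_j)-u(t_{j-1})}{t_j-t_{j-1}}\right).
\end{equation*}
If $D^s u$ had a part strictly negative on some subinterval, then for $\delta$ small enough the corresponding term $(t_j-t_{j-1})f((u(t_j)-u(t_{j-1}))/(t_j-t_{j-1}))$ would blow up by the superlinear growth of $f$ at $-\infty$, contradicting the boundedness of the left-hand side; hence $D^s u\geq 0$. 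Granted this, I would then let $\delta\to 0$: the right-hand side converges to $\int_a^b f(u')\,dx +\int f^\infty\,d D^s u = \int_a^b f(u')\,dx$, since $f^\infty\equiv 0$ on $[0,\infty)$ and $D^s u\geq 0$. This yields $\liminf F(u_n)\geq \mathcal F(u)$, hence $\mathcal F\leq \overline F$.

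\textbf{Limsup inequality ($\mathcal F\geq \overline F$).} Given $u\in BV(a,b)$ with $D^s u\geq 0$ I construct the recovery sequence as the piecewise affine interpolation $u_N$ of $u$ on the grid $t_j^N$ given by Theorem~\ref{thm:interpolationaffine}, so that $u_N\rightharpoonup^* u$ in $BV(a,b)$. On each $I_j^N$ the constant slope of $u_N$ equals $(u(t_j^N)-u(t_{j-1}^N))/|I_j^N|=Du(I_j^N)/|I_j^N|$, which is at least the mean of $u'$ on $I_j^N$ because $D^su\geq 0$. Since $f$ is monotonically decreasing and convex, Jensen's inequality gives
\begin{equation*}
\int_{I_j^N} f(u_N')\,dx \;=\; |I_j^N|\,f\!\left(\tfrac{Du(I_j^N)}{|I_j^N|}\right) \;\leq\; |I_j^N|\,f\!\left(\tfrac{1}{|I_j^N|}\int_{I_j^N} u'\,dx\right) \;\leq\; \int_{I_j^N} f(u')\,dx.
\end{equation*}
Summing over $j$ yields $F(u_N)\leq \int_a^b f(u')\,dx=\mathcal F(u)$, so $\limsup_N F(u_N)\leq \mathcal F(u)$, proving $\overline F(u)\leq \mathcal F(u)$.

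\textbf{Main obstacle.} The delicate step is the refinement argument in the liminf, where one must simultaneously handle three effects on the partition: (i) the convergence of the Riemann-type sum $\sum |I_j|f(\Delta u/|I_j|)$ to $\int f(u')\,dx$ on the absolutely continuous part (this rests on the convexity of $f$ and Jensen's inequality applied in reverse); (ii) the vanishing contribution of the positive singular part, which is where $f(+\infty)=c$ and hence $f^\infty=0$ enters critically; and (iii) the dichotomy forcing $D^s u\geq 0$, which requires carefully isolating a subinterval where a hypothetical negative singular part is concentrated and comparing the resulting very negative slope against the superlinear lower bound on $f$ at $-\infty$.
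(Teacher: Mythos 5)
The paper does not prove this result---it is cited from Gelli's thesis \cite{Gelli} and stated without proof in the appendix---so there is no in-paper argument to compare against. Your proof itself is correct and follows the standard relaxation argument for convex integral functionals on $BV$ in one dimension: the recession function $f^\infty$ (here $\equiv 0$ on $[0,\infty)$ thanks to the finite limit at $+\infty$, and $\equiv+\infty$ on $(-\infty,0)$ by the superlinear growth) is what encodes the constraint $D^su\geq 0$; the liminf rests on Jensen plus lower semicontinuity of $f$ along partitions avoiding the jump set; the limsup uses piecewise affine interpolation, monotonicity of $f$, and Jensen, and that step as you wrote it is complete and correct. The one place you have under-specified (and have yourself flagged) is the refinement step in the liminf: cleaner than ``letting $\delta\to 0$'' is to take the supremum over all admissible partitions and invoke the Goffman--Serrin/Demengel--Temam identity
\begin{equation*}
\sup_{\text{partitions}}\;\sum_j|I_j|\,f\!\left(\tfrac{Du(I_j)}{|I_j|}\right)=\int_a^b f(u')\,\mathrm{d}x+\int_{(a,b)}f^\infty\!\left(\tfrac{\mathrm{d}D^su}{\mathrm{d}|D^su|}\right)\mathrm{d}|D^su|,
\end{equation*}
which, combined with your Jensen/lsc lower bound (valid for each fixed partition) and the superadditivity of the right-hand side under refinement by convexity, gives $\liminf_n F(u_n)\geq\mathcal F(u)$ in one stroke; the dichotomy forcing $D^su\geq 0$ is then absorbed into $f^\infty\equiv+\infty$ on the negatives rather than argued separately. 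One also needs to pass to an a.e.-convergent subsequence of $(u_n)$ before selecting the partition nodes $t_j$, a routine but necessary detail. With those adjustments your proposal is a valid proof of the theorem.
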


\textbf{Acknowledgments.} LL gratefully acknowledges the kind hospitality of the Technische Universität Dresden during her research visits. AS would like to thank the Isaac Newton Institute for Mathematical Sciences for support and hospitality during the programme
``The Mathematical Design of New Materials''
when work on this paper was undertaken. This programme was supported by EPSRC grant number EP/R014604/1.

\end{document}